\newcommand{\Zz}{\mathbb{Z}}
\newcommand{\Pp}{\mathbb{P}}
\newcommand{\Rr}{\mathbb{R}}
\newcommand{\Qq}{\mathbb{Q}}
\newcommand{\Nn}{\mathbb{N}}
\newcommand{\Exc}{\operatorname{Exc}}
\newcommand{\Supp}{\operatorname{Supp}}
\newcommand{\Center}{\operatorname{Center}}
\newcommand{\lf}{\lfloor}
\newcommand{\rf}{\rfloor}
\newcommand{\Bb}{\mathcal{B}}
\newcommand{\Oo}{\mathcal{O}}
\newcommand{\Ii}{\mathcal{I}}
\newcommand{\Ll}{\mathcal{L}}
\newcommand{\nN}{\mathcal{N}}
\newtheorem{theorem}{Theorem}[section]
\newtheorem{lemma}[theorem]{Lemma}
\newtheorem{proposition}[theorem]{Proposition}
\newtheorem{definition}[theorem]{Definition}
\newtheorem{example}[theorem]{Example}
\newtheorem{remark}[theorem]{Remark}
\newtheorem{conjecture}[theorem]{Conjecture}
\begin{document}

\title[Weak Zariski decompositions and log terminal models]{Weak Zariski decompositions and log terminal models for generalized polarized pairs}

\begin{abstract}
We show that the existence of a birational weak Zariski decomposition for a pseudo-effective generalized polarized lc pair is equivalent to the existence of a generalized polarized log terminal model.
\end{abstract}

\author{Jingjun Han}
\address{Beijing International Center for Mathematical Research, Peking University,
Beijing 100871, China}
\email{hanjingjun@pku.edu.cn}
\address{Department of Mathematics, Johns Hopkins University, Baltimore, MD 21218, USA}
\email{jhan@math.jhu.edu}

\author{Zhan Li}
\address{Department of Mathematics, Southern University of Science and Technology, 1088 Xueyuan Rd, Shenzhen 518055, China} \email{lizhan@sustc.edu.cn}

\maketitle

\tableofcontents

\section{Introduction}\label{sec: introduction}
Throughout the paper, we work over the complex numbers.

The minimal model conjecture is one of the main problems in birational geometry.	

\begin{conjecture}[Minimal model conjecture]\label{conj:logmmp} For a $\Qq$-factorial dlt pair $(X,B)$, if $K_X+B$ is pseudo-effective, then $(X,B)$ has a log terminal model.
\end{conjecture}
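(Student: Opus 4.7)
The minimal model conjecture is widely open, so any realistic plan must be strategic rather than direct. The approach I would take is to reduce the existence of a log terminal model to the existence of a birational weak Zariski decomposition, i.e.\ a birational model $h \colon Y \to X$ together with a decomposition $h^*(K_X+B) \sim_{\Rr} P + N$ where $P$ is nef and $N \geq 0$. Such decompositions are conjectured by Nakayama to exist for every pseudo-effective log canonical class, and they are more flexible than log terminal models themselves: they can sometimes be produced by Fujita approximation, they behave predictably under birational modifications, and they convert the geometric problem of finding a minimal model into a divisorial-positivity problem.

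Granted such a decomposition, I would run a $(K_X+B)$-MMP with scaling of an ample divisor, and follow the decomposition along each step. The key observation is that any $(K_X+B)$-negative extremal ray $R$ must satisfy $N \cdot R < 0$, since $P$ is nef; hence every divisorial contraction and every flipping locus is supported in $\Supp(N)$. This gives the central finiteness tool: the divisors contracted or flipped are constrained to a fixed finite set of prime components, and the combinatorial structure of $N$ can only simplify along the MMP.

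Termination of flips will be the main obstacle. I would try to combine special termination in the boundary, induction on dimension via existence of log terminal models for pairs of smaller dimension, and a difficulty-style function measuring the complexity of $N$ relative to $P$. The aim is to show that after finitely many steps either every component of $\Supp(N)$ is contracted or no $(K_X+B)$-negative extremal ray meets $\Supp(N)$, at which point $K_X+B$ becomes nef on the resulting model and we obtain a log terminal model.

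In the generalized polarized setting one must additionally track the nef part $M$ of the polarization: each pullback and pushforward along the MMP must preserve nefness of the appropriate trace of $M$, and the formation of $P+N$ must remain compatible with $M$ throughout. I expect this to be technical bookkeeping rather than a conceptual hurdle, provided the MMP steps are chosen carefully and the decomposition is refreshed after each divisorial contraction or flip.
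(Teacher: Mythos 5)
This statement is a conjecture, and neither the paper nor your sketch proves it; the paper's contribution is to show (in Theorem \ref{thm: weak zariski equiv mm}) that it is \emph{equivalent}, in the generalized polarized setting, to the existence of birational NQC weak Zariski decompositions. What you have outlined is a plan for the nontrivial direction of that equivalence (carried out in Theorem \ref{thm: weak zariski implies mm}), and your outline correctly identifies the skeleton: a weak Zariski decomposition $K_X+B+M\equiv P+N$ forces the contracting loci of a $(K_X+B+M)$-MMP into $\Supp N$, and special termination plus induction on dimension should close the argument. But your plan omits the two ideas that actually make the argument work.

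First, controlling the flipping locus by $\Supp N$ is not the same as controlling it by $\lfloor B\rfloor$, which is what special termination (Theorem \ref{prop: special termination 2}) uses. The paper must reduce to the case $\Supp N \subseteq \Supp\lfloor B\rfloor$, and this is a separate induction on the counting invariant $\theta(X/Z,B+M,N)$ of Definition \ref{def: theta} (Step~3 of the proof of Theorem \ref{thm: weak zariski implies mm}); that step involves constructing new boundaries and new weak Zariski decompositions $P''+N''$, $P'''+N'''$ and comparing discrepancies, and your plan does not address it. Second, in the residual case $\theta=0$ the paper does \emph{not} simply scale by an ample divisor. It first uses a $P$-trivial preparatory MMP (Lemma \ref{prop: P trivial}), which is essential to keep the pushforward of the nef part $P$ nef along the MMP so the decomposition survives, and then runs the MMP with scaling of the (in general non-effective) NQC divisor $P$ (Subsection \ref{subsection: scalingnqcdiv}, Lemma \ref{lem:MMPscalingNQC}), driving a strictly decreasing sequence of nef thresholds $\nu_i$ down to the nef situation (cases (i)--(iii) in Step~2). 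A generic ample-scaling step on $K_X+B+M$ need not be $P$-trivial, and once the pushforward of $P$ stops being nef you can no longer conclude that $(K_X+B+M)$-negative rays are $N$-negative, which is the very observation your plan relies on. Until these two mechanisms are supplied, the plan does not constitute a proof of even the conditional statement.
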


Conjecture \ref{conj:logmmp} is known when $\dim X\le 4$, \cite{Sho09, Birkar10}, or when $(X,B)$ is a $\Qq$-factorial klt pair with a big boundary $B$, \cite{BCHM10}. But when $\dim X\ge 5$, Conjecture \ref{conj:logmmp} is still widely open.

Birkar related the minimal model conjecture to the existence of sections for pseudo-effective adjoint divisors (i.e. the non-vanishing conjecture), \cite{Bir07, Birkar10, Birkar11}. The minimal model conjecture is also related to the existence of Zariski decompositions for adjoint divisors, \cite{Birkarhuweak14}, or even weaker, to the existence of \emph{weak} Zariski decompositions for adjoint divisors, \cite{Birkarweak12}. The later means that $K_X+B\equiv P+N$ where $P$ is a nef divisor and $N$ is an effective divisor. Note that if $K_X+B$ is numerically equivalent to an effective divisor $N$, then it automatically admits a weak Zariski decomposition with $P=0$.

In \cite{Birkarhuweak14}, Birkar and Hu asked that if the existence of weak Zariski decompositions for adjoint pairs implies the minimal model conjecture. This is our starting point of the current paper. However, it seems that the natural setting of Birkar-Hu's question lies in a larger category -- the generalized polarized pairs (g-pairs), and this observation leads to the analogous conjecture on the existence of minimal models for g-pairs (see Conjecture \ref{conj:glogmmp} below). A g-pair $(X, B+M)$ consists of an ``ordinary'' log pair $(X, B)$ plus an auxiliary nef part $M$. A rudimentary model of such pairs had emerged in \cite{Birkarweak12, Birkarhuweak14} but the actual definition only appeared in \cite{BZ16} (although they came from different sources: the former came from the weak Zariski decomposition while the later came from the canonical bundle formula).

A pair (resp. g-pair) is said to be pseudo-effective if $K_X+B$ (resp. $K_X+B+M$) is pseudo-effective. We use ``$/Z$'' to denote a pair relative to the variety $Z$, and the abbreviation ``NQC'' below stands for ``nef $\Qq$-Cartier combinations'' (see Definition \ref{def: NQC}). Also see Definition \ref{def: NQC} for the meaning of a birational NQC weak Zariski decomposition. The NQC pairs are those behave well in the Minimal Model Program (MMP). Hence we state the following conjectures for such pairs.

\begin{conjecture}[Minimal model conjecture for g-pairs]\label{conj:glogmmp}
For a $\Qq$-factorial NQC g-dlt pair $(X/Z,B+M)$, if $K_X+B+M$ is pseudo-effective$/Z$, then $(X/Z,B+M)$ has a g-log terminal model$/Z$.
\end{conjecture}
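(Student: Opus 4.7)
The plan is to attack Conjecture~\ref{conj:glogmmp} by exploiting the equivalence advertised in the abstract: a pseudo-effective generalized polarized lc pair admits a g-log terminal model if and only if it admits a birational NQC weak Zariski decomposition. Granting this as the main theorem of the paper, the conjecture reduces to producing, on some birational model $\pi\colon Y\to X$, an NQC decomposition $\pi^{\ast}(K_X+B+M)\equiv P+N$ with $P$ nef$/Z$ and $N\ge 0$. The first step is purely formal: one checks that the hypotheses of the equivalence apply to a $\Qq$-factorial NQC g-dlt pair, which is immediate since g-dlt implies g-lc and the NQC property is intrinsic to the pair and not lost under the mild birational modifications needed to arrange things.

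The substantive step is the construction of the weak Zariski decomposition itself. I would proceed by induction on $\dim X$, combined with running a $(K_X+B+M)$-MMP$/Z$ with scaling of an ample divisor $H$, tracking the nef thresholds $\lambda_i$. If $\lim\lambda_i=0$, then after passing to a limiting model the pushforward of $K_X+B+M$ is nef$/Z$, and pulling back to a common resolution yields the decomposition with $N$ supported on the exceptional locus. If instead the MMP terminates in a Mori fibre space $X'\to T$, then a canonical bundle formula for g-pairs in the style of \cite{BZ16} descends the problem to a lower-dimensional NQC g-pair on $T$, to which the inductive hypothesis applies; the resulting decomposition on $T$ pulls back and, combined with the exceptional and vertical divisors contracted by the MMP, produces $P+N$ on a suitable birational model of $X$.

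The main obstacle, and precisely the reason the conjecture is widely open for $\dim X\ge 5$, is that both the termination of the MMP with scaling in the g-pair setting and the canonical bundle formula with NQC boundary rely on deep conjectures with no unconditional proof by current techniques; the limiting argument in the first branch is further complicated by the need to keep the nef part $M$ intact through each step. What makes the proposed route attractive nonetheless is that the equivalence theorem of the paper repackages the minimal model problem as an existence problem for decompositions, which is potentially accessible by asymptotic or cohomological tools---asymptotic vanishing orders, multiplier ideals, and Nakayama--Zariski type decompositions adapted to the g-pair setting---that are substantially orthogonal to the combinatorics of flips and are not a priori tied to termination.
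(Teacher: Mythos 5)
The statement you are trying to prove is a \emph{conjecture}, and the paper does not prove it unconditionally; neither does your proposal, as you yourself flag in your final paragraph. What the paper actually establishes about Conjecture~\ref{conj:glogmmp} is (a) Theorem~\ref{thm: weak zariski equiv mm}, which says it is \emph{equivalent} to the birational weak Zariski decomposition conjecture (Conjecture~\ref{conj: NQCbirweakzar}), with the forward direction being the easy Proposition~\ref{prop: g-log minimal model implies NQC zarski decomposition} and the hard direction being the bulk of Sections~\ref{sec: LMMP}--\ref{sec: proof}; and (b) Theorem~\ref{thm: ter lc implies g-log minimal model}, a conditional proof assuming termination of flips for ordinary $\Qq$-factorial dlt pairs. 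Your first paragraph is a correct reading of this reduction. The trouble begins with your second paragraph.

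The substantive step you sketch for producing the weak Zariski decomposition is essentially circular. In the branch where $\lim\lambda_i=0$, ``passing to a limiting model'' on which the pushforward of $K_X+B+M$ is nef presupposes that the MMP with scaling terminates, which is precisely what is unknown: if it did terminate, the resulting model would already be a g-log terminal model and you would never need the weak Zariski decomposition at all. The other branch, a Mori fibre space, cannot arise in this MMP: since $K_X+B+M$ is assumed pseudo-effective$/Z$ and this is preserved by each step, no step of the MMP with scaling produces a $(K_{X_i}+B_i+M_i)$-negative fibration. And a canonical bundle formula for NQC g-pairs at the level of generality you would need is itself not available in the paper or in \cite{BZ16}. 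So the proposal collapses into the open problem it set out to solve. This is not a flaw in your exposition so much as a reflection of the fact that the target statement is a genuine conjecture; the paper's contribution is the equivalence, not a proof, and the only unconditional inputs it offers toward Conjecture~\ref{conj:glogmmp} are the ones listed above.
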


\begin{conjecture}[Birational weak Zariski decomposition conjecture for g-pairs]\label{conj: NQCbirweakzar} Let $(X/Z,B+M)$ be a $\Qq$-factorial NQC g-dlt pair. If $K_X+B+M$ is pseudo-effective$/Z$, then it admits a birational NQC weak Zariski decomposition$/Z$.
\end{conjecture}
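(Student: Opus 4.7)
The stated result is Conjecture \ref{conj: NQCbirweakzar}, not a theorem, so my proposal is necessarily a reduction rather than a complete proof: the plan is to derive it from the minimal model conjecture for g-pairs (Conjecture \ref{conj:glogmmp}), which is the natural matching statement. According to the abstract, the core of the paper is the equivalence of these two conjectures, so the reduction below should be one half of that equivalence.

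Suppose a g-log terminal model $\phi : X \dashrightarrow Y$ exists$/Z$; then $K_Y + B_Y + M_Y$ is nef$/Z$, and the NQC presentation of $M$ descends to one of $M_Y$ because an MMP transforms each nef $\Qq$-Cartier summand into a nef $\Qq$-Cartier summand on $Y$. Choose a common log resolution $p : W \to X$, $q : W \to Y$, and set
\[
P := q^*(K_Y + B_Y + M_Y), \qquad N := p^*(K_X + B + M) - P.
\]
Then $P$ is nef$/Z$ by construction and admits an NQC presentation pulled back from $Y$. For $N \ge 0$, apply the negativity lemma to the minimal model $\phi$: for every prime divisor $E$ on $W$ the log discrepancy with respect to $(Y, B_Y + M_Y)$ is at least that with respect to $(X, B + M)$, with strict inequality when $E$ is $\phi$-exceptional, so $N$ is effective and supported on the pullback of the $\phi$-exceptional locus.

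The principal obstacle is that Conjecture \ref{conj:glogmmp} is itself open, so the sketch above gives only the easier direction in the equivalence the paper announces. The harder converse — producing a g-log terminal model from a birational NQC weak Zariski decomposition — is what I would expect to occupy the bulk of the paper: one would want to use the nef part $P$ on a birational model to drive an MMP with scaling on $(X/Z, B+M)$, contracting the effective part $N$ in stages, and prove termination by induction on dimension via special termination for g-dlt flips combined with some Shokurov-type difficulty or minimal-log-discrepancy invariant adapted to the generalized setting. A subtle technical point I would flag in advance is that the NQC hypothesis (rather than mere $\Rr$-Cartier nefness of $M$) is exactly what is needed to keep these combinations intact under MMP steps, which is why the conjecture is stated in the NQC form.
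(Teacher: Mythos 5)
You correctly read the landscape: this statement is a conjecture, the paper's content is the equivalence of Conjectures~\ref{conj:glogmmp} and~\ref{conj: NQCbirweakzar}, and the easy direction is exactly what the paper records as Proposition~\ref{prop: g-log minimal model implies NQC zarski decomposition} (and already in the remark following the conjecture). Your choice of $P := q^*(K_Y+B_Y+M_Y)$ and $N := p^*(K_X+B+M) - P$ on a common resolution matches the paper, as does the negativity-lemma argument for $N \ge 0$.

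However, there is a real gap in your justification that $P$ is NQC. You assert that ``the NQC presentation of $M$ descends to one of $M_Y$ because an MMP transforms each nef $\Qq$-Cartier summand into a nef $\Qq$-Cartier summand on $Y$,'' and then that $P$ ``admits an NQC presentation pulled back from $Y$.'' Two things go wrong here. First, the nef part $M_Y$ of a g-pair is only the pushforward of a nef divisor $\tilde M = \sum \mu_i \tilde M_i$ from a high model; the pushforwards $M_{Y,i}$ on $Y$ are typically not nef, so $M_Y$ itself is not NQC on $Y$ in the sense of Definition~\ref{def: NQC}. Second, even if $M_Y$ were NQC, you would still need $K_Y+B_Y+M_Y$ (not just $M_Y$) to be NQC, and $K_Y+B_Y$ is in no sense a combination of nef divisors. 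The fact that the nef divisor $K_Y+B_Y+M_Y$ decomposes as a positive $\Rr$-combination of nef $\Qq$-Cartier divisors is precisely the content of the Shokurov-polytope result (Proposition~\ref{le: decomposition to nef Cartier divisors} in the paper), whose proof relies on the NQC hypothesis via the length-of-extremal-rays bound and Lemma~\ref{lem:lengthbdd}. The paper's proof of Proposition~\ref{prop: g-log minimal model implies NQC zarski decomposition} invokes exactly this: $K_Y+B_Y+M_Y = \sum \mu_i M_i$ with $M_i$ nef $\Qq$-Cartier, and then $P = \sum \mu_i q^*(M_i)$ gives the NQC decomposition after pulling back. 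Without this ingredient your derivation of the NQC structure on $P$ does not go through. Your sketch of the hard converse direction is consistent with the paper's strategy.
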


\begin{remark}
By Proposition \ref{prop: g-log minimal model implies NQC zarski decomposition}, Conjecture \ref{conj:glogmmp} implies Conjecture \ref{conj: NQCbirweakzar}.
\end{remark}

Under the NQC assumption, we can answer the aforementioned question of Birkar-Hu in the generalized polarized categories.

\begin{theorem}\label{thm: weak zariski equiv mm}
The birational weak Zariski decomposition conjecture for g-pairs (Conjecture \ref{conj: NQCbirweakzar}) is equivalent to the minimal model conjecture for g-pairs (Conjecture \ref{conj:glogmmp}). 
\end{theorem}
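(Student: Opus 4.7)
The easy direction, namely that Conjecture \ref{conj:glogmmp} implies Conjecture \ref{conj: NQCbirweakzar}, is recorded in the remark: a g-log terminal model, pulled back to a common resolution, splits $K_X+B+M$ as the sum of a nef part (the pullback of the nef log-canonical divisor of the model) and an effective discrepancy part. So the content of Theorem \ref{thm: weak zariski equiv mm} is the converse implication, and that is where my plan concentrates.

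For the converse, I would start with a $\Qq$-factorial NQC g-dlt pair $(X/Z,B+M)$ for which $K_X+B+M$ is pseudo-effective$/Z$, and pass to a sufficiently high log resolution $\phi\colon Y\to X$ on which the nef part of $M$ descends. Applying Conjecture \ref{conj: NQCbirweakzar} to the induced NQC g-dlt pair on $Y$ produces $\phi^{*}(K_X+B+M)\equiv P+N$ with $P$ an NQC nef combination$/Z$ and $N\geq 0$. The strategy is to run a carefully chosen $(K_Y+B_Y+M_Y)$-MMP$/Z$ that gradually eliminates $N$; once $K_Y+B_Y+M_Y$ becomes nef$/Z$, we have produced the desired g-log terminal model of $(X/Z,B+M)$. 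The induction is lexicographic on $(\dim X,\ \#\text{components of }N)$, in the spirit of Birkar--Hu's argument in the ordinary pair case.

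At the heart of the plan is an MMP with scaling of an ample divisor. Because $P$ is nef, any extremal ray contracted in such an MMP must intersect $N$ negatively, i.e., must interact with $\Supp N$ nontrivially; so each flip or divisorial contraction either outright removes a component of $N$ (feeding the outer induction) or acts along a stratum of $\Supp N$. In the latter case, g-pair adjunction from \cite{BZ16} supplies a pseudo-effective NQC g-dlt pair of lower dimension on a suitable lc centre $S\subset\Supp N$, and the inductive hypothesis, Conjecture \ref{conj:glogmmp} in dimensions strictly less than $\dim X$, provides a g-log terminal model for this restricted pair. This yields special termination of the MMP along $\Supp N$, which, combined with the component-count induction, forces termination.

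The principal obstacle is, unsurprisingly, termination. The critical technical point is lifting inductive information through adjunction: one must verify that an infinite flip sequence on $Y$ restricts to an honest MMP sequence on some lc centre $S\subset\Supp N$, and that the NQC property survives both this restriction and the steps of the MMP on $Y$. A second, more organizational difficulty is that the whole scheme really constitutes a simultaneous induction on both conjectures: to invoke Conjecture \ref{conj:glogmmp} in lower dimension one first needs Conjecture \ref{conj: NQCbirweakzar} in that dimension (precisely by the equivalence being proved), so the argument must be arranged so that in dimension $d$ one only uses the equivalence in dimensions $<d$. Once the MMP-with-scaling toolkit for NQC g-pairs (cone theorem, length of extremal rays, existence of flips) is in place via \cite{BZ16}, the remainder is a faithful transcription of Birkar--Hu with ordinary adjunction replaced by its g-pair counterpart.
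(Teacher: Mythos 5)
Your proposal is in the right spirit — the easy direction via pullback to a common resolution, then reduction to a log smooth model carrying the weak Zariski decomposition, a descending induction tied to the components of $N$, and special termination via adjunction — and the logical bookkeeping you worry about at the end (organize the induction on dimension so that lower-dimensional cases are available) is essentially handled in the paper by Theorem~\ref{thm: weak zariski implies mm}, which is stated with an explicit dimension bound. However, there are two substantive gaps.

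First, your induction invariant is not quite right. You propose inducting on ``$\#\text{components of }N$,'' but special termination (Theorem~\ref{prop: special termination 2}) only controls the flipping locus when it lies inside the birational transform of $\lfloor B\rfloor$; it tells you nothing if the flipping locus merely meets $\Supp N$. This is why the paper works with $\theta(X/Z,B+M,N)$, the number of components of $N$ \emph{not} contained in $\lfloor B\rfloor$, and the whole of Step~3 of Theorem~\ref{thm: weak zariski implies mm} is devoted to decreasing $\theta$ until one reaches the case $\Supp N\subseteq\Supp\lfloor B\rfloor$. Your dichotomy ``the contraction either removes a component of $N$ or acts along a stratum of $\Supp N$'' is not a valid alternative as stated: flips do not remove divisors, and a stratum of $\Supp N$ need not be a g-lc center, so generalized adjunction is unavailable there. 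The reduction to $\Supp N\subseteq\Supp\lfloor B\rfloor$ is what makes these strata into g-lc centers in the first place.

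Second, and more importantly, once $\Supp N\subseteq\Supp\lfloor B\rfloor$ you still need to terminate, and running an ordinary MMP with scaling of an ample divisor does not obviously do it — the nef thresholds may merely tend to $0$ without reaching it, and your sketch offers no mechanism that forces them to stop. The key technical device in the paper is the g-MMP with scaling of the \emph{NQC divisor} $Q=E+P$ (with $P$ not necessarily effective), introduced in Lemma~\ref{lem:MMPscalingNQC} and Definition~\ref{defn:MMPsP}, combined with the nef threshold $\nu_i=\inf\{t\geq 1\mid N_i+tP_i\text{ nef}\}$. The point is that as soon as $\nu_i=1$ one has $K_{X_i}+B_i+M_i\equiv N_i+P_i$ nef and is done, and one shows by a DCC argument (Lemma~\ref{lem:MMPscalingNQC}) that as long as $\nu_i>1$ the threshold strictly decreases, while special termination plus lower-dimensional induction rules out an infinite strictly decreasing sequence. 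Without this scaling-by-$P$ construction and the nef-threshold argument, the termination you need in Step~2 has no proof, and your outline would stall precisely there.

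So: the broad architecture you describe matches the paper (reduce to log smooth; induct on dimension and a component-count; special termination via adjunction on $\lfloor B\rfloor$), but two of the paper's central moves — the invariant $\theta$ tracking components of $N$ outside $\lfloor B\rfloor$, and the g-MMP with scaling of an NQC, possibly non-effective, divisor with the associated nef threshold — are absent from the proposal, and both are load-bearing.
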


Besides, for Conjecture \ref{conj:glogmmp}, we show that it is a consequence of the termination conjecture for ordinary log pairs. 

\begin{theorem}\label{thm: ter lc implies g-log minimal model}
Assume the termination of flips for $\Qq$-factorial dlt pairs$/Z$, then any pseudo-effective $\Qq$-factorial NQC g-dlt pair $(X/Z,B+M)$ has a g-log terminal model. 
\end{theorem}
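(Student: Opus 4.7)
The strategy is to invoke Theorem~\ref{thm: weak zariski equiv mm}, reducing the problem to producing a birational NQC weak Zariski decomposition$/Z$ of $K_X+B+M$, and then to construct such a decomposition under the termination hypothesis by running a g-MMP with scaling and proving that it terminates.

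Concretely, I would first run a $(K_X+B+M)$-MMP$/Z$ with scaling of a general ample divisor $A/Z$. The existence of extremal contractions and flips for $\Qq$-factorial NQC g-dlt pairs has been established (Birkar--Zhang, Hacon--Moraga), and the NQC condition is preserved along such an MMP, so every step $\phi_i:(X_i,B_i+M_i)\dashrightarrow(X_{i+1},B_{i+1}+M_{i+1})$ stays inside the category we started in. It therefore suffices to show that this sequence cannot be infinite.

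To deduce termination from termination of ordinary flips, I would exploit the NQC decomposition $M=\sum_j\mu_j M^j$ with $M^j$ nef and $\Qq$-Cartier. For each $j$, since $M^j$ is nef and $A$ is ample, $M^j+\varepsilon_j A$ is ample, and for general $H^j\sim_{\Qq}M^j+\varepsilon_j A$ with $\varepsilon_j$ sufficiently small the ordinary pair $(X,B+\sum_j\mu_j H^j)$ is $\Qq$-factorial dlt with $K_X+B+\sum_j\mu_j H^j\equiv K_X+B+M+\bigl(\sum_j\mu_j\varepsilon_j\bigr)A$. The key technical step is to arrange the scaling factors and the choice of the $H^j$ (possibly refreshed after each flip) so that every extremal ray the g-MMP contracts is also $(K_X+B+\sum_j\mu_j H^j)$-negative and vice versa; granting this, the two MMPs coincide birationally step by step, and the termination hypothesis applied to the auxiliary ordinary dlt MMP$/Z$ forces the g-MMP to terminate. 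Its output is then a g-log terminal model for $(X/Z,B+M)$, and in particular a birational NQC weak Zariski decomposition, which by Theorem~\ref{thm: weak zariski equiv mm} also yields the conclusion.

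The principal obstacle is this lifting step. After each flip one must verify that the pushed-forward or freshly chosen divisors $H^j_{i+1}$ still form a dlt boundary and that the ordinary MMP keeps contracting exactly the rays the g-MMP dictates, while the perturbation parameters $\varepsilon_j$ do not drift in a way that destroys either property. I expect to handle this either by a single, uniform choice of $H^j$ together with a bound on the $\varepsilon_j$ that persists along every step of the MMP, or by a limit/compactness argument letting $\varepsilon_j\to 0$ along the sequence of flips and invoking ACC-type properties of the relevant invariants to extract termination.
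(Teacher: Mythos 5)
Your strategy is to perturb the nef part into the boundary: replace each $M^j$ by an ample $H^j\sim_\Qq M^j+\varepsilon_j A$ and compare the g-MMP with an ordinary dlt MMP on $(X,B+\sum_j\mu_j H^j)$. This is genuinely different from what the paper does, and unfortunately it does not close. There are two related problems. First, $\sum_j\mu_j H^j$ is ample, so the auxiliary boundary is big and the auxiliary MMP terminates already by \cite{BCHM10}; the termination hypothesis is never actually engaged. Second, and more seriously, the auxiliary MMP and the g-MMP with scaling of $A$ contract the same ray only as long as the g-MMP's nef threshold $\nu_i$ exceeds $\varepsilon\coloneqq\sum_j\mu_j\varepsilon_j$. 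Once $\nu_i\le\varepsilon$ the auxiliary MMP has stopped (since $K_X+B+M+\varepsilon A$ is then nef) while the g-MMP may keep running. Taking $\varepsilon\to 0$ only shows that $\lim\nu_i$ is either $0$ or attained after finitely many steps; the hard case is precisely $\nu_i\to 0$ with infinitely many flips, and ``ACC-type properties'' do not obviously rule this out. In fact, handling $\lim\nu_i=0$ is essentially what Theorem~\ref{thm: weak zariski scaling ample} does, but that argument requires the existence of a g-log minimal model (Theorem~\ref{thm: weak zariski implies mm} and \ref{thm: gmmtermination}), which is what you are trying to produce --- so as stated your outline is circular at exactly this point.

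The paper takes the opposite tack: it keeps $M$ as a nef part and never pushes it into the boundary. After reducing to the log smooth case, one first runs a g-MMP on $K_X+B+(1+\beta)M$ with $\beta\gg 1$; by Lemma~\ref{prop: P trivial} this g-MMP is $M$-trivial, so for every contracting curve $C$ one has $M\cdot C=0$ and hence $(K_X+B)\cdot C<0$. The g-MMP is therefore an ordinary MMP on the $\Qq$-factorial dlt pair $(X,B)$ --- whose boundary is not big --- and it terminates by the hypothesis, producing $Y$ with $K_Y+B_Y+(1+\beta)M_Y$ nef. One then runs a g-MMP on $K_Y+B_Y+M_Y$ with scaling of $M_Y$ (Lemma~\ref{lem:MMPscalingNQC}); now $M_Y\cdot C>0$ and $(K_Y+B_Y)\cdot C<0$ on every contracting curve, so again this is an ordinary $(K_Y+B_Y)$-MMP and terminates by the hypothesis. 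The composite yields a g-log minimal model, and only then are Proposition~\ref{prop: g-log minimal model implies NQC zarski decomposition} and Theorem~\ref{thm: weak zariski scaling ample} invoked to upgrade to a g-log terminal model. The crucial point your proposal misses is that one can arrange for the nef part to intersect contracting curves trivially or positively, so that $(K+B+M)$-negativity forces $(K+B)$-negativity outright; there is no drifting perturbation to control.
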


As for the termination of g-MMP, we show the following result.

\begin{theorem}\label{thm: weak zariski scaling ample}
Assume the birational weak Zariski decomposition conjecture (Conjecture \ref{conj: NQCbirweakzar}). Let $(X/Z,B+M)$ be a pseudo-effective $\Qq$-factorial NQC g-dlt pair. Then any sequence of g-MMP on $(K_X+B+M)$ with scaling of an ample divisor$/Z$ terminates.
\end{theorem}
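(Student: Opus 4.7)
The plan is to reduce termination of the g-MMP with scaling to the existence of g-log terminal models, which Theorem \ref{thm: weak zariski equiv mm} supplies under Conjecture \ref{conj: NQCbirweakzar}. Thus every pseudo-effective $\Qq$-factorial NQC g-dlt pair$/Z$ admits a g-log terminal model, and the task is to leverage this existence to terminate a prescribed run of the MMP with scaling of an ample divisor $A$.

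Setup: let $(X_i, B_i + M_i)$ be the sequence produced by the MMP, with scaling numbers $\lambda_1 \geq \lambda_2 \geq \cdots \searrow \lambda \geq 0$. Each step contracts an extremal ray which is $(K+B+M)$-negative and $(K+B+M+\lambda_i A)$-trivial, so each $(X_i, B_i + M_i + \lambda_i A_i)$ is nef$/Z$. First I would verify that, for a general choice of $A$, the auxiliary pairs $(X/Z, B + \lambda_i A + M)$ and the limiting pair $(X/Z, B + \lambda A + M)$ are again $\Qq$-factorial NQC g-dlt and pseudo-effective$/Z$; Theorem \ref{thm: weak zariski equiv mm} then produces g-log terminal models for this entire family.

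The core of the proof is a finiteness-of-models argument: show that on a compact parameter interval $[0, \lambda_1]$, the family $(X/Z, B + tA + M)$ admits a finite chamber decomposition on which the weak log canonical model is constant. Granting this, the pairs $(X_i, B_i + M_i + \lambda_i A_i)$ realize only finitely many distinct weak log canonical models of the family, and since g-MMP steps on $\Qq$-factorial g-dlt pairs are isomorphisms in codimension one, this contradicts the assumption of an infinite flipping sequence. Termination follows.

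The main obstacle is establishing the Shokurov--Birkar-type chamber decomposition in the generalized polarized setting: one must verify that the polytope of admissible parameters $t$ decomposes into finitely many rational chambers on each of which the weak log canonical model is well-defined and unchanged. The NQC hypothesis on $M$ is essential here because it ensures that the nef part is a $\Qq$-linear combination of nef $\Qq$-Cartier divisors, so that the classical polyhedral argument -- together with the existence of g-log terminal models just obtained -- carries through to show that the scaling sequence $\{\lambda_i\}$ can only take finitely many values before stabilizing, completing the termination.
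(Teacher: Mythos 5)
Your reduction has a circularity problem, and the core step is a genuine gap that cannot be patched by the hand-wave offered.

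\textbf{Circularity.} In the paper's logical order, Theorem \ref{thm: weak zariski equiv mm} is proved \emph{from} Theorem \ref{thm: weak zariski scaling ample}: the direction ``weak Zariski $\Rightarrow$ minimal model'' is exactly the statement that the g-MMP with scaling of an ample divisor terminates, and the terminal object is then a g-log terminal model. So you cannot invoke Theorem \ref{thm: weak zariski equiv mm} to prove Theorem \ref{thm: weak zariski scaling ample}. What is available at this stage is Theorem \ref{thm: weak zariski implies mm}, which furnishes \emph{g-log minimal models} (not g-log terminal models) under Conjecture \ref{conj: NQCbirweakzar}, and which is proved independently.

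\textbf{The finiteness-of-models step.} You correctly identify the chamber decomposition as ``the main obstacle,'' but then assert that the classical polyhedral argument ``carries through.'' It does not. In the BCHM framework, finiteness of weak log canonical models over a polytope of boundaries requires the boundary to lie in the big cone throughout the polytope; this is what underlies their termination with scaling (Corollary 1.4.2), which assumes $\Delta$ big. Here the parameter interval $[0,\lambda_1]$ has the endpoint $t=0$, where the boundary $B+M$ is only pseudo-effective, not big. Finiteness of models on a closed interval whose boundary point has non-big data is not a known result; it is at least as hard as the termination you are trying to prove, and the existence of g-log minimal models alone does not yield it. The paper's proof sidesteps finiteness of models altogether: it splits into the case $\lim\nu_i=\mu>0$ (then by Lemma \ref{lem:glcklt} the g-MMP is an MMP on a klt pair with big boundary $\sim_\Rr B+M+\mu A$, and BCHM's Corollary 1.4.2 terminates it) and the case $\mu=0$ with $\nu_i>0$ for all $i$, where one applies the targeted termination criterion Theorem \ref{thm: gmmtermination}: a g-MMP with scaling terminates once the $\mu$-twisted pair has a g-log minimal model, supplied by Theorem \ref{thm: weak zariski implies mm}. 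That criterion is proved in Section \ref{sec: special termination} by a delicate argument (Proposition \ref{prop: a special termination 1} and special termination) and is not a corollary of a chamber decomposition. Replacing it by an unestablished finiteness statement leaves the proof with a hole precisely where the real work lies.

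As a minor point, g-MMP steps are not all isomorphisms in codimension one: divisorial contractions are not. This is harmless (they are finite in number), but your phrasing should restrict that claim to flips.
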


We briefly describe the idea of the proof of Theorem \ref{thm: weak zariski equiv mm}. The nontrivial part is to show that Conjecture \ref{conj: NQCbirweakzar} implies Conjecture \ref{conj:glogmmp}. Suppose that $K_X+B+M \equiv P+N$ is a weak Zariski decomposition with $P$ nef and $N \geq 0$. As it was pointed out in \cite[\S 6]{Birkarhuweak14}, one only needs to consider the case $\Supp N \subseteq \Supp\lf B\rf$. In this case, we run a special g-MMP on $(K_X+B+M)$ with scaling of a divisor (not necessarily effective) such that the g-MMP is $N$-negative and in each step, $N_i+\nu_i P_i$ is nef, where
\[
\nu_i \coloneqq \inf\{t \geq 1 \mid N_i+tP_i\text{~is nef}\}
\] is the nef threshold. Notice that once $\nu_i=1$, then $K_{X_i}+B_i+M_i \equiv P_i+N_i$ is nef, and we are done. Thus the idea is to decrease $\nu_i$ until it reaches $1$. In fact, as long as $\nu_i>1$, we can get a smaller $\nu_{i+1}$. After this, we use the special termination and the induction on dimensions to conclude that the sequence $\{v_i\}$ cannot be infinite. The special termination still holds for this setting (see Theorem \ref{prop: special termination 2}) by an appropriate adaptation of the argument in \cite{Birkar12}. Finally, by putting these together, we can prove the result.

\begin{remark}
By different approaches, Hacon and Moraga independently obtain some of the above results in certain general forms, \cite{HM18weak}. They use ideas from \cite{Bir07}, while our proof is based on ideas in \cite{Birkar11,Birkar12,Birkarhuweak14}. Furthermore, our results are about boundaries with real coefficients, while results in \cite{HM18weak} require the boundaries with rational coefficients.\end{remark}

The paper is organized as following. In Section \ref{sec: preliminaries}, we collect the relevant definitions. In Section \ref{sec: LMMP}, we first elaborate on the MMP for g-pairs (g-MMP) developed in \cite{BZ16}, and then prove some standard results in this setting. We also introduce the g-MMP with scaling of an NQC divisor. In Section \ref{sec: special termination}, we establish the special termination result for g-MMP with scaling. The proofs of the theorems are given in Section \ref{sec: proof}.

\medskip

\noindent\textbf{Acknowledgements}.
We would like to thank Caucher Birkar, the paper is deeply influenced by his ideas. We thank Chen Jiang for showing us a simple proof of the length of extremal rays for the g-pairs. We also thank the participants of the Birational Geometry Seminar at BICMR/Peking University for their interests in the work, especially Yifei Chen and Chuyu Zhou. J. H. thanks Caucher Birkar for the invitation to the University of Cambridge where the paper was written, the thanks also go to his advisors Gang Tian and Chenyang Xu for constant support and encouragement. Z. L. thanks Keiji Oguiso for the invitation to the University of Tokyo where some key ideas are conceived. This work is partially supported by NSFC Grant No.11601015.

\section{Preliminaries}\label{sec: preliminaries}
\subsection{Generalized polarized pairs}

\begin{definition}[Generalized polarized pair]\label{def: g-pair}
	A \emph{generalized polarized pair (g-pair)} over $Z$ consists of a normal variety $X$ equipped with projective morphisms 
	\[
	\tilde X \xrightarrow{f}X \to Z,
	\] where $f$ is birational and $\tilde X$ is normal, an $\Rr$-boundary $B \geq 0$ on $X$, and an $\Rr$-Cartier divisor $\tilde M$ on $\tilde X$ which is nef$/Z$ such that $K_{X}+B+M$ is $\Rr$-Cartier, where $M\coloneqq f_*\tilde M$. We say that $B$ is the boundary part and $M$ is the nef part. 
\end{definition}
	
For our convenience, when the base $Z$, the boundary part and the nef part are clear from the context, we will just say that $(X,B+M)$ is a g-pair. Notice that, in contrast to \cite{BZ16}, we denote the generalized polarized pair by $(X, B+M)$ instead of $(X', B'+M')$.

Let $g:X'\to \tilde{X}$ be a birational morphism, 
such that $X'\to X$ is a log resolution of $(X,B)$. Let
$$K_{X'}+B'+M'=g^{*}(K_X+B+M),$$
where $M'=g^{*}\tilde{M}$. 
We say that $(X',B'+M')\to X$ is a \emph{log resolution} of $(X,B+M)$. By replacing $(\tilde{X},\tilde{B}+\tilde{M})$ with $(X', B'+M')$, we may assume that $\tilde X\to X$ is a log resolution of $(X,B+M)$. In the same fashion, $\tilde X$ can be chosen as a sufficiently high model of $X$. In particular, if there exists a variety $Y$ birational to $X$, we can always assume that there exists a morphism from $\tilde X$ to $Y$ which commutes with $X \dasharrow Y$.

Many definitions/notions for ordinary log pairs have counterparts for generalized polarized pairs. For convenience, we use the prefix ``g-'' to denote the corresponding notions. For example, one can define the \emph{generalized log discrepancy} (g-log discrepancy) of a prime divisor $E$ over $X$: let $\tilde X$ be a high enough model which contains $E$, and let
\[
K_{\tilde X}+\tilde B+\tilde M=f^*(K_{X}+B+M).
\] Then the g-log discrepancy of $E$ is defined as (see \cite{BZ16} Definition 4.1)
\[
a(E, X, B+M)=1-{\rm mult}_E\tilde B.
\] 
A g-lc place is a divisor $E$ on a birational model of $X$, such that $a(E; X, B+M)=0$. A g-lc center is the image of a g-lc place, and the g-lc locus is the union of all the g-lc centers.

\medskip

We say that $(X,B+M)$ is generalized lc (g-lc) (resp. generalized klt (g-klt)) if the g-log discrepancy of any prime divisor is $\geq 0$ (resp. $>0$). 

Moreover, as $\tilde M$ is a nef divisor, if $M$ is $\Rr$-Cartier, by the negativity lemma (see \cite[Lemma 3.39]{KM98}), $f^* M=\tilde M+E$ with $E \geq 0$ an exceptional divisor. In particular, this implies that if $K_{X}+B$ is $\Rr$-Cartier, then the log discrepancy of a divisor $E$ with respect to $(X, B)$ is greater than or equal to the g-log discrepancy of $E$ with respect to $(X, B+M)$.

The definition of generalized dlt (g-dlt) is subtle. 
\begin{definition}[G-dlt]\label{def:g-dlt}
Let $(X,B+M)$ be a g-pair. We say that $(X,B+M)$ is \emph{g-dlt} if it is g-lc and there is a closed subset $V\subset X$ ($V$ can be equal to $X$) such that
\begin{enumerate}
\item $X\backslash V$ is smooth and $B|_{X\backslash V}$ is a simple normal crossing divisor,
\item if $a(E, X, B+M)=0$, then the center of $E$ satisfies $\Center_X(E) \not\subset V$ and $\Center_X(E)\backslash V$ is a lc center of $(X\backslash V, B\backslash V)$.
\end{enumerate}
\end{definition}
\begin{remark}\label{rmk: klt}
 If $(X,B+M)$ is a $\Qq$-factorial dlt pair, then $X$ is klt.
\end{remark}

Our definition of g-dlt is slightly different from the definition in \cite[page 13]{Bir16a}. We will show that our definition of g-dlt is preserved under adjunctions and running MMPs. 
\begin{remark}\label{rmk: dlt}
	Another possible definition of g-dlt is as follows: 
	
	A g-pair $(X,B+M)$ is g-dlt if there exists a log resolution $\pi: X'\to X$ of $(X,B+M)$, such that $a(E,X,B+M)>0$ for every exceptional divisor $E\subset Y$.
	
	For the ordinary log pairs (i.e. $\tilde M=0$), the above two definitions are equivalent (see \cite[Theorem 2.44]{KM98}). However, for g-pairs, it is not known whether the two definitions are the same or not.
\end{remark}

The adjunction formula for g-lc pairs is given in \cite[Definition 4.7]{BZ16}. 

\begin{definition}[Adjunction formula for g-pairs]\label{def: g-adjunction}
	Let $(X/Z,B+M)$ be a g-dlt pair with data $\tilde X \xrightarrow{f} X \to Z$ and $\tilde M$. Let $S$ be a component of $\lfloor B \rfloor$ and $\tilde S$ its birational transform on $\tilde X$. We may assume that $f$ is a log resolution of $(X,B+M)$. Write 
	\[
	K_{\tilde X} +\tilde B+\tilde M=f^*(K_{X} +B +M),
	\] then
	\[
	K_{\tilde S} +B_{\tilde S} +M_{\tilde S} \coloneqq (K_{\tilde X} +\tilde B+\tilde M)|_{\tilde S}
	\]
	where $B_{\tilde S} = (\tilde B-\tilde S)|_{\tilde S}$ and $M_{\tilde S} =\tilde M|_{\tilde S}$. Let $g$ be the induced morphism $\tilde S\to S$. Set $B_{S} = g_*B_{\tilde S}$ and $M_{S} =g_*M_{\tilde S}$. Then we get the equality
	\[
	K_{S}+B_{S}+M_{S} = (K_{X}+B+M)|_{S},
	\] which is referred as \emph{generalized adjunction formula}.
\end{definition}
	
	Suppose that $\tilde M=\sum \mu_i \tilde M_i$, where $\tilde M_i$ is a nef$/Z$ Cartier divisor for each $i$, and $B=\sum b_j B_j$ the prime decomposition of the $\Rr$-divisor $B$. Let $\bm{b}=\{b_j\}$, $\bm{\mu}=\{\mu_i\}$ be the coefficient sets. For a set of real numbers $\Gamma$, set 
\begin{equation}\label{eq: S}
\mathbb{S}(\Gamma) \coloneqq \{1-\frac{1}{m}+\sum_{j} \frac{r_j\gamma_j}{m}\le 1 \mid m\in\mathbb{Z}_{>0},r_j\in\Zz_{\ge0}, \gamma_j \in \Gamma\}\cup\{1\}.
\end{equation}
Then the coefficients of $B_{S}$ belong to the set $\mathbb{S}(\bm{b},\bm{\mu}) \coloneqq \mathbb S(\bm{b} \cup \bm{\mu})$ (see \cite[Proposition 4.9]{BZ16}).

\begin{lemma}\label{lem:adjgdlt}
	Let $(X/Z,B+M)$ be a g-dlt pair with data $\tilde X \xrightarrow{f} X \to Z$ and $\tilde M$. Let $S$ be a component of $\lfloor B \rfloor$, and $B_S,M_S$ be the divisors in the g-adjunction formula (see Definition \ref{def: g-adjunction}
). Then $(S,B_S+M_S)$ is still g-dlt. 
\end{lemma}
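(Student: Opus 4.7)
The approach is to take the closed subset $V \subset X$ from the g-dlt structure of $(X,B+M)$ and set $V_S \coloneqq V \cap S$, and then verify the two conditions of Definition \ref{def:g-dlt} for the g-pair $(S, B_S + M_S)$ with this choice. Throughout, I will work on a log resolution $\bar X' \to \tilde X \to X$ of $(X,B+M)$ chosen high enough so that any relevant prime divisor over $S$ appears on the birational transform $\bar S'$ of $\tilde S$, and chosen so that $\bar X' \to X$ is an isomorphism over the snc open locus $X \setminus V$ whenever possible (we may take the log resolution of $(X,B)$ to be an isomorphism over the snc locus of $(X,B)$).

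First, I would verify that $(S, B_S+M_S)$ is g-lc. Writing $K_{\bar X'} + \bar B' + \bar M' = f'^*(K_X+B+M)$, the generalized adjunction formula gives $B_{\bar S'} = (\bar B' - \bar S')|_{\bar S'}$. Since every coefficient in $\bar B'$ is at most $1$ by g-lc of $(X,B+M)$, and the components of $\bar B'$ meet $\bar S'$ transversally by snc, the coefficients in $B_{\bar S'}$ are also at most $1$. This shows g-lc of $(S,B_S+M_S)$. For condition (1) of g-dlt, since $S$ is a coefficient-$1$ component of the snc divisor $B|_{X \setminus V}$, $S \setminus V_S = S \cap (X \setminus V)$ is a smooth divisor of $X \setminus V$, and the ordinary adjunction on the snc open subset yields $B_S|_{S \setminus V_S} = (B-S)|_{S \setminus V_S}$, which is snc.

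For condition (2), let $F$ be a g-lc place of $(S,B_S+M_S)$, realised as a prime divisor on $\bar S'$. The equality $B_{\bar S'} = (\bar B' - \bar S')|_{\bar S'}$ and the snc structure of $\bar B'$ on $\bar X'$ together force coefficient $1$ of $F$ in $B_{\bar S'}$ to arise uniquely as $F = \bar S' \cap D$ for a coefficient-$1$ component $D$ of $\bar B' - \bar S'$; in particular, $D$ is a g-lc place of $(X,B+M)$. By the g-dlt property of $(X,B+M)$, $\Center_X(D) \not\subset V$ and $\Center_X(D) \setminus V$ is an lc center of the snc pair $(X \setminus V, B|_{X \setminus V})$. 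Because $\bar X' \to X$ can be taken as an isomorphism over the snc locus, the snc intersection of $D$ with $\bar S'$ over $X \setminus V$ matches $\Center_X(D) \cap S$ there, and $\Center_S(F)$ equals an irreducible component of $\Center_X(D) \cap S$ meeting $X \setminus V$. One then concludes $\Center_S(F) \not\subset V_S$, and by snc adjunction the subvariety $\Center_S(F) \setminus V_S$ is an lc center of $(S \setminus V_S, B_S|_{S \setminus V_S})$, since intersections of coefficient-$1$ components of $B|_{X \setminus V}$ restrict correctly to intersections of coefficient-$1$ components of $B_S|_{S \setminus V_S}$ under the adjunction.

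The main obstacle is to justify the identification $\Center_S(F) = (\text{irreducible component of})\ \Center_X(D) \cap S$ and, crucially, to ensure this component is not entirely contained in $V$. The subtle case is when $D$ is exceptional over $X$: one must use that $\Center_X(D) \not\subset V$ and that $\bar S' \cap D$ is non-empty only when $S$ and $\Center_X(D)$ interact compatibly with the snc structure on $X \setminus V$, splitting into the two sub-cases $\Center_X(D) \subseteq S$ (so $\Center_X(D) \cap S = \Center_X(D) \not\subset V$ immediately) and $\Center_X(D) \not\subseteq S$ (where $\Center_X(D) \setminus V \cap S$ is itself a strictly smaller lc center of the snc pair $(X \setminus V, B|_{X \setminus V})$). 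That the nef part $\tilde M$ contributes only to $M_{\bar S'}$ and not to $B_{\bar S'}$ is essential: it guarantees that every g-lc place of $(S,B_S+M_S)$ genuinely lifts to a coefficient-$1$ component of $\bar B'$, which is what allows the whole lifting argument to go through.
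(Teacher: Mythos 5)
Your approach — taking $V_S = V \cap S$ and lifting g-lc places of $(S, B_S+M_S)$ to g-lc places of $(X, B+M)$ — is the same as the paper's, and the verification of condition (1) and of g-lc-ness is fine. The gap is in the choice of lifted g-lc place. You lift $F$ to a coefficient-$1$ component $D$ of $\bar B' - \bar S'$ with $F$ an irreducible component of $D \cap \bar S'$ (note: $F = \bar S' \cap D$ is not exact, since the intersection need not be irreducible). This gives only a containment $\Center_S(F) \subseteq \Center_X(D) \cap S$, and g-dlt applied to $D$ tells you $\Center_X(D) \not\subset V$ — but that does not yield $\Center_S(F) \not\subset V_S$, because $\Center_S(F)$ may be a proper irreducible component of $\Center_X(D) \cap S$, and that particular component could still lie inside $V$. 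Your two sub-cases do not close this: in the case $\Center_X(D) \subseteq S$ you still only have an inclusion $\Center_S(F) \subseteq \Center_X(D)$, not equality, and in the other case you identify the relevant lc center on $X\setminus V$ but never pin down that $\Center_S(F)$ actually meets $X\setminus V$.

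The fix the paper uses is to lift to a g-lc place whose center \emph{equals} $\Center_S(F)$ rather than merely contains it. Since $F$ (as a codimension-two subvariety of $\bar X'$) is a stratum of the snc pair $(\bar X', \bar B')$ — it is an irreducible component of $\bar S' \cap D$ with both $\bar S'$ and $D$ of coefficient $1$ — the exceptional divisor $E'$ of the blow-up of $\bar X'$ along $F$ is a g-lc place of $(X, B+M)$, and by construction $\Center_{\bar X'}(E') = F = \Center_{\bar S'}(F)$, whence $\Center_X(E') = \Center_S(F)$ \emph{on the nose}. Now g-dlt of $(X,B+M)$ applied to $E'$ directly gives $\Center_S(F) = \Center_X(E') \not\subset V$, hence $\Center_S(F) \not\subset V_S$, and $\Center_X(E')\setminus V$ being an lc center of $(X\setminus V, B|_{X\setminus V})$ contained in $S$ gives, by ordinary snc adjunction, that $\Center_S(F) \setminus V_S$ is an lc center of $(S\setminus V_S, B_S|_{S\setminus V_S})$. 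A secondary point: your standing assumption that $\bar X' \to X$ can be taken to be an isomorphism over $X\setminus V$ is not always available, since the data of a g-pair forces the resolution to factor through the fixed $\tilde X$ carrying $\tilde M$; the argument above does not need that assumption.
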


\begin{proof}
	We use the notation in Definition \ref{def: g-adjunction}. Let $V$ be the closed subset $V\subset X$ in Definition \ref{def:g-dlt}, and $V_S=V\cap S$. It is clear that $S\backslash V_S$ is smooth and $B|_{S\backslash V_S}$ is a simple normal crossing divisor. 
	
	If $a(E,S,B_S+M_S)=0$, then $\Center_{\tilde S}(E)$ is a stratum of $(\tilde{S},\tilde{B}_{\tilde S})$, and thus a stratum of $({\tilde X}, \tilde B)$. Let $E'$ be a g-lc place of $(X,B+M)$, such that $\Center_{\tilde S}(E)=\Center_{\tilde X}(E')$. Since $(X,B+M)$ is g-dlt, $\Center_{X}(E') \not\subset V$ and $\Center_{X}(E')\backslash V$ is a lc center of $(X\backslash V,B\backslash V)$. Thus, $\Center_{S}(E)\backslash V_S$ is a lc center of $(S\backslash V_S,B_S|_{S\backslash V_S})$.
\end{proof}

\begin{remark}
	In general, $K_S+B_S$ may not be $\Rr$-Cartier and thus $(S,B_S)$ may not be dlt. In particular, $(S, B_S+M_S)$ may not be g-dlt in the sense of \cite{Bir16a}. This is the main reason that we do not use the definition of g-dlt as \cite{Bir16a}.
\end{remark}

The following proposition is similar to \cite[Proposition 3.9.2]{Fujino07}.

\begin{proposition}\label{prop: intersection on g-lc centers}
	Let $(X/Z,B+M)$ be a g-dlt pair with data $\tilde X \xrightarrow{f} X \to Z$ and $\tilde M$. Suppose that $\tilde M=\sum \mu_i \tilde M_i$, where $\tilde M_i$ is a nef$/Z$ Cartier divisor for every $i$, and let $B=\sum b_j B_j$ be the prime decomposition of an $\Rr$-divisor $B$. Let $V$ be a g-lc center of $(X,B+M)$. Then there exists a g-dlt pair $(V, B_V+M_V)$ such that 
	\[
	(K_X+B+M)|_{V}=K_V+B_{V}+M_{V},
	\]
	where $M_{V}$ is the push forward of $\tilde{M}|_{\tilde{V}}$ on $V$, and $\tilde{V}$ is the birational transform of $V$ on $\tilde{X}$. Moreover, the coefficients of $B_{V}$ belong to the set $\mathbb{S}(\bm{b},\bm{\mu})$.
\end{proposition}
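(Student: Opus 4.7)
The plan is to argue by induction on the relative codimension $c = \dim X - \dim V$. When $c=0$, i.e.\ $V=X$, we simply take $B_V=B$ and $M_V=M$, and there is nothing to prove.

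For the inductive step, suppose $V\subsetneq X$ is a g-lc center. Since $(X,B+M)$ is g-dlt, there is a g-lc place $E$ over $X$ with $\Center_X(E)=V$; the condition in Definition \ref{def:g-dlt} forces $V\setminus V_0$ (for the exceptional closed subset $V_0$ of Definition \ref{def:g-dlt}) to be a stratum of the SNC divisor $\lfloor B\rfloor|_{X\setminus V_0}$. In particular there exists a component $S\subseteq \lfloor B\rfloor$ with $V\subseteq S$. I would then apply Lemma \ref{lem:adjgdlt} to obtain the g-dlt pair $(S, B_S+M_S)$ satisfying
\[
(K_X+B+M)|_S = K_S+B_S+M_S,
\]
where the nef part on $S$ is the pushforward of $\tilde M|_{\tilde S}=\sum \mu_i (\tilde M_i|_{\tilde S})$, whose Cartier summands are again nef$/Z$, so the multiset of nef coefficients is unchanged and still equals $\bm{\mu}$. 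By \cite[Proposition 4.9]{BZ16} (as quoted right after \eqref{eq: S}), the boundary coefficients $\bm{b}_S$ of $B_S$ lie in $\mathbb{S}(\bm{b},\bm{\mu})$.

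Next I would verify that $V$ is still a g-lc center of $(S,B_S+M_S)$: the place $E$ (or an appropriate place over $\tilde S$ whose center is the birational transform of $V$) is a g-lc place for $(S,B_S+M_S)$ by the adjunction identity above and the standard identification of log-discrepancies under adjunction. Since $\dim S=\dim X-1$, the relative codimension of $V$ inside $S$ is $c-1$, so the inductive hypothesis applied to $(S,B_S+M_S)$ and the g-lc center $V$ yields a g-dlt pair $(V,B_V+M_V)$ with
\[
(K_S+B_S+M_S)|_V = K_V+B_V+M_V,
\]
and with coefficients of $B_V$ in $\mathbb{S}(\bm{b}_S,\bm{\mu})$. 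Restricting the first identity to $V$ gives the desired equality $(K_X+B+M)|_V=K_V+B_V+M_V$.

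The remaining point, and the only real subtlety I expect, is to check that the coefficient set is preserved after iterating adjunction: namely that $\mathbb{S}(\mathbb{S}(\bm{b},\bm{\mu}),\bm{\mu})\subseteq \mathbb{S}(\bm{b},\bm{\mu})$. This is a direct (if slightly tedious) calculation from the definition \eqref{eq: S}: substituting an element $1-\tfrac{1}{m_j}+\sum_k \tfrac{r_{jk}\gamma_k}{m_j}$ of $\mathbb{S}(\bm{b},\bm{\mu})$ into another expression of the same shape, clearing denominators by the common multiple $m\prod m_j$, produces again an expression of the form $1-\tfrac{1}{N}+\sum \tfrac{s_k\gamma_k}{N}$ with $N\in\Zz_{>0}$ and $s_k\in\Zz_{\ge 0}$, which is $\leq 1$ by construction. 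Hence the coefficients of $B_V$ actually lie in $\mathbb{S}(\bm{b},\bm{\mu})$, closing the induction.
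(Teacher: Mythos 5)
Your proposal is correct and follows essentially the same route as the paper: the paper also observes that $V$ is an irreducible component of an intersection $V_1\cap\cdots\cap V_k$ of components of $\lfloor B\rfloor$, applies Lemma \ref{lem:adjgdlt} $k$ times, and invokes the idempotence $\mathbb{S}(\bm{b},\bm{\mu})=\mathbb{S}(\mathbb{S}(\bm{b},\bm{\mu}))$ (citing \cite[Proposition~3.4.1]{HMX14}) to control the coefficients. Your induction on relative codimension is simply an unrolled version of ``apply the lemma $k$ times,'' and you are a bit more explicit than the paper about the two minor checkpoints it leaves implicit (that $V$ remains a g-lc center after one adjunction, and that the $\bm{\mu}$-part of the coefficient data is unchanged under restriction of the nef part).
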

\begin{proof}
	Let $k$ be the codimension of $V$. By definition of g-dlt, $V$ is an irreducible component of $V_{1}\cap V_{2}\cap \ldots\cap V_{k}$ for some $V_{i}\subseteq  \lfloor B\rfloor$. Under the notation of \eqref{eq: S}, a straightforward computation shows that $\mathbb{S}(\bm{b},\bm{\mu})=\mathbb{S}(\mathbb{S}(\bm{b},\bm{\mu}))$ (for example, see \cite[Proposition 3.4.1]{HMX14}). Then the claim follows from applying Lemma \ref{lem:adjgdlt} $k$ times. 
\end{proof}

\subsection{G-log minimal models and g-log terminal model}\label{sec: MM and Mori fibre space}

The notions of log minimal/terminal models still make sense in the generalized polarized setting. Following Shokurov \cite{Sho92}, certain extractions of g-lc places are allowed for g-log minimal models. First, if $f: X \dashrightarrow Y$ is a birational map, and $B$ is an effective divisor on $X$, we define
\begin{equation}\label{eq: B_Y}
B_Y \coloneqq f_*B +E,
\end{equation} where $f_*B$ is the birational transform of $B$ on $Y$, and $E$ is the sum of reduced exceptional divisors of $f^{-1}$. 

\begin{definition}[G-log minimal model \& g-log terminal model]\label{def: g-log minimal model and g-log terminal model}
	Let $(X/Z, B+M)$ be a g-pair with data $\tilde X \to X$ and nef part $M$. Then a g-pair $(Y/Z, B_Y+M_Y)$ is called a \emph{g-log minimal model} of $(X/Z, B+M)$, if
	\begin{enumerate}
		\item there is a birational map $X \dashrightarrow Y$,
		\item $B_Y$ is the same as \eqref{eq: B_Y}, and $M_Y$ is the pushforward of $\tilde M$ (we can always assume that there exists a morphism $\tilde X \to Y$),
		\item $K_Y+B_Y+M_Y$ is nef, 
		\item $(Y/Z, B_Y+M_Y)$ is $\Qq$-factorial g-dlt with data $\tilde X \to Y$ and nef part $M_Y$,
		\item $a(D, X, B+M)<a(D,Y, B_Y+M_Y)$ for any divisor $D$ on $X$ which is exceptional over $Y$.
	\end{enumerate}
	Furthermore, if $X \dasharrow Y$ is a birational contraction (i.e. there is no divisor on $Y$ which is exceptional over $X$), we say that $(Y/Z, B_Y+M_Y)$ is a \emph{g-log terminal model} of $(X/Z, B+M)$.
\end{definition}

\begin{remark}\label{rmk: extract lc places}
Just as the case for log pairs, a g-log minimal model can only extracts g-lc places. That is, a divisor $E\subset Y$ is exceptional over $X$ satisfies $a(E, X, B+M)=0$. 
\end{remark}

\subsection{Weak Zariski Decompositions}\label{sec: Zariski decomposition}
On a normal variety $X$ over $Z$ (we write this by $X/Z$), an $\Rr$-Cartier divisor $D$ is said to admit a \emph{weak Zariski decomposition} if 
\[D \equiv N+P/Z,\] with $N \geq 0$ and $P$ a nef$/Z$ $\Rr$-Cartier divisor (see \cite[Definition 1.3]{Birkarweak12}). Unlike Zariski decompositions, weak Zariski decompositions may not be unique.

\begin{definition}\label{def: bir zariski decomposition}
	Let $X/Z$ be a variety and $D$ be a Cartier divisor. We say that $D$ admits a \emph{birational Zariski decomposition} if there exists a birational morphism $f: Y \to X$ from a normal variety $Y$, such that $f^*D$ admits a weak Zariski decomposition. 
\end{definition}

Notice that birational weak Zariski decomposition is called weak Zariski decomposition in \cite[Definition 1.3]{Birkarweak12}. For a lc pair $(X, \Delta)$, the non-vanishing conjecture asserts that $K_X+\Delta \sim_\Rr N$ for some effective divisor $N$. This implies that  $K_X+\Delta$ admits a weak Zariski decomposition by taking $P=0$.

For (weak) Zariski decompositions, the most important case is when $D$ equals to the adjoint divisor $K_X+B$ (or $K_X+B+M$). In the sequel, when saying ``existence a (weak) Zariski decomposition'' without referring to a divisor, it should be understood that the decomposition is for the adjoint divisor.

\begin{remark}
	Zariski proved that on a smooth projective surface, an effective divisor $D$ can be decomposed as a sum of a nef divisor and an effective divisor with some extra properties, \cite{Zariski62}, which is known as the Zariski decomposition. There are various generalizations to higher dimensions. See \cite{Nakayamazariski} for more details. For an arbitrary divisor, it may not always admit a weak Zariski decomposition, \cite{John14}. But for the adjoint divisor $K_X+B$, the existence of a weak Zariski decomposition is a consequence of the existence of a minimal model, which is highly plausible. 
\end{remark}

\subsection{Nef $\Qq$-Cartier combinations (NQC)}\label{sec: NQC}

We need a technical assumption to guarantee that certain g-MMP on g-pairs behave as the ordinary log pairs (see Section \ref{subsection: length of extremal rays and its applications}). Here the abbreviation ``NQC'' stands for ``nef $\Qq$-Cartier combinations''. 

\begin{definition}\label{def: NQC} 
	We have following definitions on decompositions of nef$/Z$ $\Rr$-Cartier divisors in various settings. 
	\begin{enumerate}
		\item We say that an $\Rr$-Cartier divisor $M$ is \emph{NQC} over $Z$, if
		\[
		M\equiv\sum_i r_i M_i/Z,
		\] where $r_i \in \Rr_{>0}$ and $M_i$ are $\Qq$-Cartier nef$/Z$ divisors. 
		\item A g-pair $(X/Z, B+M)$ with data $\tilde X \xrightarrow{f}X \to Z$ and $\tilde M$ is said to be an \emph{NQC g-pair}, if $\tilde M$ is NQC. 
		\item We define \emph{NQC g-lc, NQC g-klt}, etc. if an NQC g-pair is g-lc, g-klt, etc.
		\item We say that a g-pair $(X/Z, B+M)$ admits a \emph{birational NQC weak Zariski decomposition$/Z$}, if there exists a birational morphism $g: Y \to X/Z$ such that $g^*(K_X+B+M)\equiv P+N/Z$, where $N \geq 0$ and $P$ is NQC over $Z$.
	\end{enumerate}
\end{definition} 

We will avoid repeating ``over $Z$'' if the base $Z$ is clear in the context. By definition, the NQC property is preserved under g-MMPs and generalized adjunctions. 

The NQC assumption excludes the pathological phenomenon incurred  by the nef part. In \cite{BZ16}, Birkar proved ACC for g-lc thresholds and Global ACC for NQC pairs (though the name was not given there). In the current paper, we need the g-lc pairs to be NQC in order to run some kind of special g-MMPs.

In Proposition \ref{prop: g-log minimal model implies NQC zarski decomposition}, we show that the existence of a g-log minimal model for an NQC g-lc pair implies the existence of a birational NQC weak Zariski decomposition for this g-pair.

\section{MMP for generalized polarized pairs}\label{sec: LMMP}

\subsection{MMP for generalized polarized pairs}\label{subsection: MMP for g-pairs}
For g-lc pairs, the cone theorem, the existence of flips and the termination of flips are still expected to hold true. 
\begin{conjecture}[Cone theorem for g-lc pairs]
	Let $(X,B+M)$ be a g-lc pair. We have the following claims.
	\begin{enumerate}
		\item There are countably many curves $C_j\subset X$ such that $0<-(K_X+B+M)\cdot C_j\le 2\dim X$, and
		$$\overline{\rm NE}(X)=\overline{\rm NE}(X)_{(K_X+B+M)\ge0}+\sum\Rr_{\ge0}[C_j].$$
		\item Let $F\subset \overline{\rm NE}$ be a $(K_X+B+M)$-negative extremal face. There there a unique morphism ${\rm cont}_{F}:X\to Y$ to a projective variety such that  $({\rm cont}_{F})_{*}\Oo_X=\Oo_Y$, and an irreducible curve $C\subset X$ is mapped to a point by ${\rm cont}_{F}$ if and only if $[C]\in F$. 
		\item Let $F$ and ${\rm cont}_{F}:X\to Y$ be as in $(2)$. Let $L$ be a line bundle on $X$ such that $L\cdot C=0$ for every curve $C$ with $[C]\in F$. Then there is a line bundle $L_Y$ on $Y$ such that $L\simeq {\rm cont}_{F}^{*}L_Y$. 
	\end{enumerate}
\end{conjecture}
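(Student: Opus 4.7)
The plan is to establish the cone theorem by reducing to the classical statement for ordinary log pairs, exploiting that the nef part $M$ has nonnegative intersection with every effective curve. I will address (1) first, since (2) and (3) follow from it together with a generalized base point free theorem.

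For (1), I would begin by passing to a $\Qq$-factorial g-dlt modification of $(X,B+M)$, reducing to the case that $(X,B+M)$ is $\Qq$-factorial g-dlt and $X$ is klt (Remark \ref{rmk: klt}). The existence of the polyhedral decomposition and the rationality of the generating rays should then follow from a perturbation argument: for any ample $\Qq$-divisor $H$ and small rational $\epsilon>0$, the ordinary klt log pair $(X, B+\epsilon H)$ has a cone decomposition with rational curves $C_j^\epsilon$, and letting $\epsilon\to 0$ while using that $M$ is nef yields candidate rays for $(X, B+M)$. The bound $-(K_X+B+M)\cdot C_j\le 2\dim X$ is the delicate point. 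I would follow the simple approach of Chen Jiang, acknowledged in the introduction, which presumably applies bend-and-break directly on each extremal ray: if $R$ is $(K_X+B+M)$-negative, then $-K_X\cdot C>(B+M)\cdot C\ge 0$ for $C$ generating $R$, so the standard bend-and-break bound on the degree of a rational curve moving in a positive-dimensional family applies.

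For (2), given a $(K_X+B+M)$-negative extremal face $F$, I would choose a $\Qq$-Cartier supporting divisor $L$ with $F = L^{\perp}\cap \overline{\rm NE}(X)_{(K_X+B+M)\le 0}$. Then $H' \coloneqq K_X+B+M+tL$ is nef and big for $t\gg 0$, and under the NQC hypothesis a generalized base point free theorem (in the spirit of \cite{BZ16}) yields that $mH'$ is basepoint-free for some $m>0$; its Stein factorization defines ${\rm cont}_F : X\to Y$, and uniqueness together with the characterization of contracted curves is then formal. For (3), if $L\cdot C = 0$ for every curve $C$ with $[C]\in F$, then $L$ is numerically trivial on the fibers of ${\rm cont}_F$, and Kollár's descent lemma (using Kawamata--Viehweg vanishing on fibers) produces the unique line bundle $L_Y$ on $Y$ with $L\simeq {\rm cont}_F^* L_Y$.

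The main obstacle, beyond the technical care required in the length bound of (1), is that the fully general g-lc case (without the NQC hypothesis) appears inaccessible by current techniques: the base point free theorem invoked in (2) is only known under NQC, where $M$ decomposes as a finite positive combination of nef $\Qq$-Cartier divisors and the standard vanishing theorems apply directly. Extending the cone theorem to arbitrary nef $\Rr$-Cartier parts $M$ would seem to require genuinely new diophantine approximation ideas for the supporting divisors, which is precisely why the companion Conjectures \ref{conj:glogmmp} and \ref{conj: NQCbirweakzar} are stated only in the NQC setting.
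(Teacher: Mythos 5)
The statement you are attempting to prove is stated in the paper as a \emph{conjecture}, not a theorem: the paper never claims to prove the cone theorem for general g-lc pairs, and offers no proof of it. What the paper does establish is only a fragment, namely Proposition~\ref{prop: length of extremal rays for g-pair} (the length bound $-(K_X+B+M)\cdot C\le 2\dim X$), and this is proved under the \emph{extra} hypothesis that $X$ is a $\Qq$-factorial klt variety, by the klt-perturbation argument of Lemma~\ref{lem:glcklt}: one writes $K_X+\Delta_\epsilon\sim_\Rr K_X+B+M+\epsilon H$ with $(X,\Delta_\epsilon)$ klt, applies Kawamata's bound for klt pairs to get a curve $\Gamma_\epsilon$, compares degrees with an extremal curve $C$, and lets $\epsilon\to 0$. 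This perturbation device is the idea of Chen Jiang that the acknowledgements refer to; it is not bend-and-break applied directly on $(X,B+M)$.

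Your proposal contains several genuine gaps. The bend-and-break step in your treatment of (1) rests on the claim that $(B+M)\cdot C\ge 0$ for a curve $C$ generating a $(K_X+B+M)$-negative ray, hence $-K_X\cdot C>0$; but there is no reason for $B\cdot C\ge 0$ (the curve may lie inside a component of $B$), nor even for $M\cdot C\ge 0$ (recall $M=f_*\tilde M$ is only a pushforward of a nef divisor, and on $X$ one has $f^*M=\tilde M+E$ with $E\ge 0$ exceptional, so $M\cdot C$ can be negative for a curve meeting the image of $\Exc(f)$). So this step fails outright. Second, the klt perturbation is not free of charge: Lemma~\ref{lem:glcklt} needs $(X,B+M)$ g-klt or $X$ klt, which is why Proposition~\ref{prop: length of extremal rays for g-pair} and all the MMP-running lemmas in the paper carry the hypothesis ``$X$ klt''; the conjecture as stated has no such hypothesis, and passing to a g-dlt modification does not automatically transfer the cone-theoretic statements back down to $X$ without further argument. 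Third, and most substantially, the ``generalized base point free theorem'' you invoke for (2) is not a result in \cite{BZ16} or elsewhere in the literature in the required generality; for ordinary lc pairs the contraction theorem already requires the Fujino--Ambro theory of quasi-log structures, and a g-lc analogue of those vanishing theorems is precisely what is missing. This is why the paper is careful to state (2) and (3) as conjectures, and to develop the g-MMP in Section~\ref{subsection: MMP for g-pairs} only under the working assumption ($\star$), which lets one contract rays by deferring to \cite{BCHM10} after a klt perturbation, rather than by proving a g-lc base point free theorem.
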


\begin{definition}[Flips for g-pairs]
Let $(X,B+M)$ be a g-lc pair. A \emph{$(K_X+B+M)$-flipping contraction} is a proper birational morphism $f:X\to Y$ such that $\Exc(f)$ has codimension at least two in $X$, $-(K_X+B+M)$ is $f$-ample and the relative Picard group has rank $\rho(X/Y)=1$.
	
	A g-lc pair $(X^{+},B^{+}+M^{+})$ together with a proper birational morphism $f^{+}:X^{+}\to Y$ is called a \emph{$(K_X+B+M)$-flip} of $f$ if 
	\begin{enumerate}
		\item $B^{+}, M^{+}$ are the birational transforms of $B, M$ on $X^{+}$, respectively,
		\item $K_{X^{+}}+B^{+}+M^{+}$ is $f^{+}$-ample, and
		\item $\Exc(f^{+})$ has codimension at least two in $X^{+}$.
	\end{enumerate}
	For convenience, We call the induced birational map, $X\dashrightarrow X^{+}$, a $(K_X+B+M)$-flip.
\end{definition}
 
\begin{conjecture}[Existence of flips for g-lc pairs] For a g-lc pair, the flip of a flipping contraction always exists.
\end{conjecture}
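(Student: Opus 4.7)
The plan is to follow Shokurov's classical reduction of flips to pl-flips and then to adapt the finite generation machinery of \cite{BCHM10} and \cite{BZ16} to the NQC g-lc setting. The result is already known for g-klt pairs with $\Qq$-coefficients via \cite{BZ16}, so the two genuinely new issues are the lc-versus-klt distinction and the passage from $\Qq$- to $\Rr$-coefficients in both the boundary $B$ and the NQC decomposition $\tilde M = \sum r_i \tilde M_i$.

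First I would proceed by induction on $\dim X$, using Shokurov's reduction together with the cone theorem and existence of minimal models in lower dimension to reduce the existence of a $(K_X+B+M)$-flipping contraction to the g-pl case: a flipping contraction $f:X\to Y$ for which some component $S\subseteq \lf B\rf$ is $-f$-ample. In that case the flip is $X^+=\proj_Y R$, where
\[
R = \bigoplus_{m\geq 0} f_*\Oo_X(\lf m(K_X+B+M)\rf),
\]
so the problem reduces to finite generation of $R$ over $\Oo_Y$. I would then pass via adjunction along $S$ (Definition \ref{def: g-adjunction} and Lemma \ref{lem:adjgdlt}) to the lower dimensional g-dlt pair $(S,B_S+M_S)$, and use a restriction/lifting argument to relate $R$ to the canonical ring of $(S,B_S+M_S)$, which is finitely generated by the induction hypothesis. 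To deal with the g-lc (as opposed to g-klt) case, I would first pass to a $\Qq$-factorial g-dlt modification, then perturb $B$ slightly so as to get a g-klt pair with the same contracted face, reducing to g-klt on a birational model.

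The hard part will be twofold. First, to handle $\Rr$-coefficients both in $B$ and in the NQC combination, one must extend the polytope technique of \cite[\S 3.10]{BCHM10}: realize $(B,\{r_i\})$ as a rational convex combination of $\Qq$-valued points, apply finite generation at each such point, and patch. Second, and more seriously, the extension/lifting theorem underlying the \cite{BCHM10} finite generation must be upgraded to the generalized polarized setting, where the nef part $\tilde M$ is only nef (not semi-ample): one needs sections of $m(K_S+B_S+M_S)$ to lift to sections of $m(K_X+B+M)$ in a manner compatible with the NQC decomposition $\tilde M = \sum r_i \tilde M_i$. This extension step is the technical heart of \cite{BZ16} in the g-klt $\Qq$-coefficient case and must be reworked so that it remains valid for g-dlt pairs with real coefficients — this is where I expect the main obstacle to lie, and clearing it would give the conjecture by combining with the Shokurov reduction and the dimensional induction above.
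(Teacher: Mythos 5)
You have set out to prove something the paper explicitly presents as an open \emph{conjecture}, not a theorem. The authors do not give a proof of existence of flips for g-lc pairs; instead, in \S\ref{subsection: MMP for g-pairs} they carefully delineate the cases where g-MMP steps can be carried out by reducing to the ordinary klt MMP, via assumption~($\star$): for any $0<\epsilon\ll 1$ there is $\Delta_\epsilon \sim_\Rr B+M+\epsilon A/Z$ with $(X,\Delta_\epsilon)$ klt. Lemma~\ref{lem:glcklt} shows this holds when $(X,B+M)$ is g-klt or when $X$ itself admits a klt boundary; in those cases a $(K_X+B+M)$-flipping contraction coincides with a $(K_X+\Delta_\epsilon)$-flipping contraction for small $\epsilon$, and the flip is the standard klt flip from \cite{BCHM10}. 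All subsequent g-MMP steps in the paper (e.g.\ in Proposition~\ref{prop: termination for very exceptional divisor}, Proposition~\ref{prop: dlt}, Theorem~\ref{thm: gmmtermination}) are run precisely under these hypotheses, never in the full g-lc generality of the conjecture.

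\textbf{On your proposal.} Your sketch is a sensible research program in the spirit of Shokurov/Birkar (reduction to g-pl flips $\to$ adjunction $\to$ extension/lifting $\to$ finite generation), but it is not a proof and has an acknowledged gap that is exactly where the difficulty lives. Two concrete issues beyond the one you flag. First, the reduction to the g-dlt model is not free: Proposition~\ref{prop: dlt} (g-dlt modification) is itself proved by running a g-MMP under assumption~($\star$), which requires $X$ to already admit a klt boundary; for a genuinely g-lc pair whose underlying $X$ is not klt this step is not available, so the reduction is circular. Second, the extension/lifting theorem is not merely a technical upgrade of \cite{BCHM10}: the nef part $\tilde M$ is only nef, not semi-ample, and the usual extension arguments rely on Kodaira/Kawamata--Viehweg vanishing controlled by semi-ample twists; it is not established that one can lift sections of $m(K_S+B_S+M_S)$ to $m(K_X+B+M)$ compatibly with an arbitrary NQC decomposition. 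In \cite{BZ16} this issue is avoided precisely because they work with pairs on klt ambient varieties and reduce to the BCHM finite generation, rather than re-proving an extension theorem for an abstract nef part. Until such an extension theorem (or some substitute) is established, the conjecture remains open, and the paper quite deliberately treats it as such.
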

\begin{conjecture}[Termination of flips for g-lc pairs] There is no infinite sequence of flips for g-lc pairs.
\end{conjecture}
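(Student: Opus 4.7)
Termination of flips for g-lc pairs is expected to hold in close analogy with the classical case, and since termination for ordinary $\Qq$-factorial dlt pairs is itself open in dimension at least five, any realistic plan can only reduce the g-pair statement to its ordinary counterpart or to other widely-believed conjectures. I will sketch two such strategies and indicate where each breaks down.

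The most natural reduction lives in the NQC setting. Given an infinite sequence of g-lc flips starting from $(X/Z, B+M)$, one passes to a sufficiently high model $\tilde X$ and writes the nef part as $\tilde M \equiv \sum r_i \tilde M_i / Z$ with $\tilde M_i$ $\Qq$-Cartier and nef$/Z$; a small perturbation by differences of ample divisors reduces to $r_i \in \Qq$. If each $\tilde M_i$ were $\Qq$-effective in sufficiently general position, one could move the nef part into the boundary by choosing $M_i' \sim_{\Qq} \tilde M_i$, form an ordinary $\Qq$-factorial dlt pair $(X, B + \sum \epsilon_i f_* M_i')$, and invoke termination of ordinary flips. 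The obstruction, and indeed the motivation for the NQC formalism, is that nef $\Qq$-Cartier divisors need not be $\Qq$-effective, so such representatives may fail to exist; thus this reduction does not go through in general.

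The alternative plan, more in keeping with the paper's approach, is a Shokurov--Birkar-style argument: establish an unconditional special termination for g-MMP, analogous to Theorem \ref{prop: special termination 2} but without the scaling hypothesis, and combine it with a difficulty-type function that strictly decreases at every g-flip. Special termination itself should follow from the g-adjunction of Proposition \ref{prop: intersection on g-lc centers} and induction on $\dim X$, using that the coefficients of the restricted boundary land in a DCC set $\mathbb{S}(\bm{b},\bm{\mu})$. The difficulty function would rely on an ACC-type input for g-log discrepancies, which is available in the NQC setting by the results of \cite{BZ16}, together with a finiteness statement for the g-lc places extracted during the MMP.

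The main obstacle, both for the reduction and for the difficulty-function plan, is to control the g-klt part of the pair in the absence of any scaling divisor. In the classical setting this is precisely the open core of Shokurov's termination program, so one expects that any complete proof here would either follow classical termination as a black box or else require, as an input, something of the strength of the non-vanishing conjecture or the birational NQC weak Zariski decomposition of Conjecture \ref{conj: NQCbirweakzar}; indeed, Theorem \ref{thm: weak zariski scaling ample} already gives such a conditional termination, but only for MMP with scaling of an ample divisor.
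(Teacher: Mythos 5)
You correctly recognize that this statement is a genuine open conjecture, not something the paper proves: the paper states it as Conjecture 3.5 precisely to record what would be needed for a full g-MMP, and proves no version of it unconditionally. Your assessment of the obstructions is sound, and it accurately reflects what the paper does achieve in this direction. Theorem \ref{thm: ter lc implies g-log minimal model} is exactly your first reduction made to work under the stronger hypothesis that classical termination of $\Qq$-factorial dlt flips holds: there the nef part is not moved into the boundary (your correct objection that nef need not be $\Qq$-effective), but instead one runs a g-MMP that is forced to be $M$-trivial via the length-of-extremal-rays bound (Lemma \ref{prop: P trivial}), so that it doubles as an ordinary $(K_X+B)$-MMP and classical termination applies. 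Theorem \ref{thm: weak zariski scaling ample} is your second strategy made conditional on the birational NQC weak Zariski decomposition, and it only handles MMP with scaling of an ample divisor, exactly as you note. The special termination you ask for unconditionally (Theorem \ref{prop: special termination 2}) is in the paper, but it too is formulated for g-MMP with scaling and already invokes the existence of g-log minimal models in lower dimension via Theorem \ref{thm: gmmtermination}, so it does not break the circularity for a general infinite flip sequence. In short: there is no gap in your reasoning because you do not claim a proof; you have correctly identified that this conjecture remains open even in the NQC setting and that the paper's contribution is to establish the conditional versions you describe.
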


Although the MMP for g-pairs is not established in full generality, some important cases could be derived from the standard MMP. We elaborate these results which are developed in \cite[\S 4]{BZ16}.

Let $(X/Z,B+M)$ be a $\Qq$-factorial g-lc pair, and $A$ be a general ample$/Z$ divisor. 

\begin{quote}
	($\star$) Suppose that for any $0<\epsilon\ll1$, there exists a boundary $\Delta_{\epsilon} \sim_\Rr B+M+\epsilon A/Z$, such that $(X,\Delta_{\epsilon})$ is klt. 
\end{quote}

Under the assumption ($\star$), we can run a g-MMP$/Z$ on $(K_X+B+M)$, although the termination is not known. In fact, let $R$ be an extremal ray$/Z$, such that $(K_X+B+M)\cdot R<0$. For $0<\epsilon\ll 1$, we have $(K_X+B+M+\epsilon A)\cdot R<0$. By assumption, there exists $\Delta_{\epsilon} \sim_\Rr B+M+\epsilon A/Z$, such that $(X,\Delta_{\epsilon})$ is klt, and $(K_X+\Delta_{\epsilon})\cdot R<0$. Now, $R$ can be contracted and its flip exists if the contraction is a flipping contraction. If we obtain a g-log minimal model or a g-Mori fiber space, we stop the process. Otherwise, let $X\dashrightarrow Y$ be the divisorial contraction or the flip, then $(Y,B_{Y}+M_{Y})$ is naturally a g-lc pair. Moreover, for any $0<\epsilon\ll 1$, $(Y,\Delta_{\epsilon,Y})$ is klt. Repeating this process gives the g-MMP$/Z$.

The usual notion of g-MMP with scaling of the general divisor $A$ also makes sense under assumption ($\star$) (see \cite{BZ16}).

The following lemma shows that assumption ($\star$) is satisfied in two cases. As a result, we may run a g-MMP for such g-pairs.

\begin{lemma}\label{lem:glcklt}
	Let $(X/Z,B+M)$ be a g-lc pair, and $A$ be an ample$/Z$ divisor. Suppose that either 
	
	(i) $(X, B+M)$ is g-klt, or
	
	(ii) there exists a boundary $C$, such that $(X,C)$ is klt.
	
	Then, there exists a boundary $\Delta \sim_\Rr B+M+A/Z$, such that $(X,\Delta)$ is klt. 
	
	Moreover, if $X$ is $\Qq$-factorial, we may run a g-MMP on $K_X+B+M$.
\end{lemma}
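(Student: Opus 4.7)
The plan is to prove case~(i) directly via a Kodaira--Bertini argument on a log resolution, reduce case~(ii) to case~(i) by a convex combination with the klt pair $(X,C)$, and derive the ``moreover'' assertion by noting that the main statement, applied with $\varepsilon A$ in place of $A$, furnishes exactly the data of condition~($\star$) of Section~\ref{subsection: MMP for g-pairs}.

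For case~(i), I would pick a log resolution $f\colon\tilde X\to X$ of $(X,B+M)$ and write $K_{\tilde X}+\tilde B+\tilde M=f^*(K_X+B+M)$. Because $(X,B+M)$ is g-klt, every coefficient of $\tilde B$ is strictly less than $1$, leaving a uniform slack $\alpha>0$. Since $A$ is ample$/Z$, the divisor $f^*A$ is nef and big$/Z$, hence $\tilde M+f^*A$ is big$/Z$. I would then exploit ampleness of $A$ (writing $f^*A\sim_{\Rr}\tfrac{1}{N}f^*A_N$ for $A_N$ a general member of $|NA|$ with $N\gg 0$, whose pullback has small coefficients) together with Kodaira's lemma to produce an effective $\tilde D\sim_{\Rr}\tilde M+f^*A/Z$ whose support is SNC with $\tilde B$, contains no $f$-exceptional components, and whose coefficients are all $<\alpha$. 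Setting $\Delta:=B+f_*\tilde D$, the negativity lemma identifies $f^*(K_X+\Delta)$ with $K_{\tilde X}+\tilde B+\tilde D$ exactly (the difference being an $f$-exceptional, $f$-numerically trivial divisor), and since $\tilde B+\tilde D$ has SNC support with all coefficients $<1$, the pair $(X,\Delta)$ is klt. By construction $\Delta\sim_{\Rr}B+M+A/Z$.

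For case~(ii), I would form the auxiliary g-pair with boundary $(1-\varepsilon)B+\varepsilon C$ and nef part $(1-\varepsilon)\tilde M$, for small $0<\varepsilon\ll 1$. Taking the pullback to a common log resolution and using linearity in the defining relation, the g-log discrepancy of any divisor $E$ becomes $(1-\varepsilon)\,a(E;X,B+M)+\varepsilon\,a(E;X,C)$, which is strictly positive by g-lcness and kltness respectively; so the auxiliary pair is g-klt. I would then replace $A$ by
\[
A'':=A+\varepsilon\bigl((K_X+B+M)-(K_X+C)\bigr),
\]
which is $\Rr$-Cartier and, by openness of ampleness, ample$/Z$ for small $\varepsilon$. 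A direct arithmetic check gives $(1-\varepsilon)B+\varepsilon C+(1-\varepsilon)M+A''=B+M+A$, so applying case~(i) to the auxiliary pair with ample divisor $A''$ produces $\Delta\sim_{\Rr}B+M+A/Z$ with $(X,\Delta)$ klt. The ``moreover'' statement then follows by applying the lemma with $\varepsilon A$ in place of $A$ for each $0<\varepsilon\ll 1$: the resulting $\Delta_\varepsilon$ are precisely the input condition~($\star$) requires, so the g-MMP machinery described in Section~\ref{subsection: MMP for g-pairs} runs.

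The main obstacle will be the production of the small-coefficient effective representative $\tilde D\sim_{\Rr}\tilde M+f^*A/Z$ in case~(i): a generic big class need not admit effective representatives with uniformly small multiplicities, because the fixed part of a naive Kodaira decomposition contributes bounded rather than vanishing coefficients. The argument has to exploit the two structural features specific to this setting -- namely that the ``negative'' ingredient $f^*A$ descends from an honest ample divisor on $X$ (hence is representable by $\tfrac{1}{N}$ of a general reduced very ample section) and that g-kltness of $(X,B+M)$ leaves a strictly positive coefficient slack -- so that after passing to a sufficiently high log resolution the fixed contribution can be absorbed into arbitrarily small exceptional terms.
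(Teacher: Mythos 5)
Your overall strategy --- reduce (ii) to (i) by a convex combination with the klt pair $(X,C)$, and handle (i) by producing a small-coefficient effective representative of the big nef divisor $\tilde M+f^*A$ on a log resolution --- matches the paper's. Your reduction in (ii) is correct and is a mild variant: the paper perturbs the nef part to $\epsilon H+(1-\epsilon)M$ with $H$ a new general ample divisor and then invokes (i) with $A/2$, whereas you keep the nef part proportional, namely $(1-\epsilon)M$, and absorb the discrepancy into the adjusted ample divisor $A''=A+\epsilon(B+M-C)$; both reductions are fine, and your linear formula for the g-log discrepancy of the auxiliary pair is correct.

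The real gap is exactly where you flag it, in case (i), and the remedy you propose is the wrong one. You say that ``after passing to a sufficiently high log resolution the fixed contribution can be absorbed into arbitrarily small exceptional terms,'' but passing to a higher model does not shrink the multiplicities of the non-$f$-exceptional components of a Kodaira decomposition; it only separates supports. The correct mechanism, which the paper uses, is a scaling applied to the whole big-and-nef divisor: write $\tilde M+f^*A\sim_\Rr H+E'$ with $H$ ample$/Z$ and $E'\ge0$, then for $k\in\Nn$ put $H_k:=\frac{k-1}{k}(\tilde M+f^*A)+\frac1k H$, which is ample$/Z$ (nef plus ample), so that $\tilde M+f^*A\sim_\Rr H_k+\frac1k E'$ with the \emph{same} fixed divisor $E'$ now weighted by $\frac1k$. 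Taking $k\gg0$ makes $\frac1k E'$ have coefficients below the g-klt slack $\alpha$, and representing $H_k$ by a general $\frac1N$-fraction of a reduced member (the Bertini step you correctly describe for $f^*A$) handles the ample part. Note also that your requirement that $\tilde D$ contain no $f$-exceptional components is unnecessary: $\frac1kE'$ typically does contain such components, yet the crepant identity $K_{\tilde X}+\tilde B+H_k+\frac1kE'=f^*(K_X+\Delta)$ with $\Delta=f_*(\tilde B+H_k+\frac1kE')$ still holds, since both sides are $\Rr$-linearly equivalent (to $f^*(K_X+B+M+A)$, hence $f$-numerically trivial) and have the same pushforward $K_X+\Delta$, so their difference is $f$-exceptional and $f$-numerically trivial and vanishes by the negativity lemma --- which is precisely the argument you sketch, with no need to avoid the exceptional locus.
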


\begin{proof}
	Suppose that $(X, B+M)$ is g-klt. We have
	\[
	K_{\tilde X}+\tilde B+\tilde M+f^*(A) =f^*(K_{X}+B+M+A),
	\] 
	where $\tilde M+f^*(A)$ is big and nef. Hence for $k \in \Nn$, there exists an ample divisor$/Z$ $H_k\ge0$, and an effective divisor $E$, such that $\tilde{M}+f^*(A)\sim_\Rr H_k+\frac {1}{k}E$. For general $H_k$ and $k \gg 1$, $K_{\tilde X}+\tilde B+H_k+\frac{1}{k}E$ is sub-klt.
	 Let 
	\[
	\Delta\coloneqq f_*(\tilde B+H_k+\frac 1 k E)\sim_\Rr B+M+A.
	\] 
	Since 
	\[
	K_{\tilde X}+\tilde B+\tilde M+f^*(A) \sim_\Rr K_{\tilde X}+\tilde B+H_k+\frac{1}{k}E,
	\] 
	we have 
	\[
	K_{\tilde X}+\tilde B+H_k+\frac{1}{k}E = f^*(K_{X}+\Delta),
	\] and $(X,\Delta)$ is klt. 
	
	Suppose that (ii) holds. By assumption, $B+M-C$ is $\Rr$-Cartier, and there exists $0<\epsilon\ll1$, such that $\epsilon(B-C+M)+A/2$ is ample. Let $H$ be a general ample divisor, such that $\epsilon H\sim_{\Rr}\epsilon(B-C+M)+A/2$, and $(X,C+H)$ is klt. Thus $(X, (\epsilon C+(1-\epsilon) B)+(\epsilon H+(1-\epsilon)M))$ is g-klt with boundary part $\epsilon C+(1-\epsilon) B$ and nef part $\epsilon H+(1-\epsilon)M$. Besides,
	\begin{align*}
	&K_X+(\epsilon C+(1-\epsilon) B)+(\epsilon H+(1-\epsilon)M)\\
	\sim_{\Rr}&\epsilon(K_X+C+H)+(1-\epsilon)(K_X+B+M)\\
	\sim_{\Rr}&K_X+B+M+A/2.	
	\end{align*}
	Apply (i) to $(X, (\epsilon C+(1-\epsilon) B)+(\epsilon H+(1-\epsilon)M))$ with $A/2$ we get (ii).
\end{proof}
\begin{remark}
	As a simple corollary, suppose that $X$ is $\Qq$-factorial klt and $(X/Z,B+M)$ is g-lc, then there are countably many extremal rays $R/Z$, such that $(K_X+B+M)\cdot R<0$. 
\end{remark}

A g-MMP for a g-dlt pair preserves g-dltness. 
\begin{lemma}\label{lem:dltpreserved}
	Let $(X/Z,B+M)$ be a $\Qq$-factorial g-dlt pair. Let $g: X\dashrightarrow Y/Z$ be either a divisorial contraction of a $(K_X+B+M)$-negative extremal ray or a $(K_X+B+M)$-flip. Let $B_Y=g_{*}(B), M_Y=g_{*}(M)$, then $(Y,B_Y+M_Y)$ is also $\Qq$-factorial g-dlt.
\end{lemma}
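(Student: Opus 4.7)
The plan is to verify each clause in the definition of a $\Qq$-factorial g-dlt pair for $(Y, B_Y+M_Y)$. That $Y$ is $\Qq$-factorial and that $K_Y+B_Y+M_Y$ is $\Rr$-Cartier are standard consequences of the construction of divisorial contractions and flips of extremal rays. The g-pair data on $Y$ is provided by the same high model $\tilde X$ (after replacing it, if necessary, by a common resolution of $X$ and $Y$ in the flip case) together with the original nef divisor $\tilde M$, so that a morphism $\tilde X \to Y$ exists and $M_Y$ is the pushforward of $\tilde M$.

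The key technical ingredient is the monotonicity of g-log discrepancies: for every prime divisor $F$ over $X$,
\[
a(F, Y, B_Y+M_Y) \;\geq\; a(F, X, B+M),
\]
with equality if and only if $\Center_X(F)$ is not contained in $\Exc(g)$ (divisorial case) or in the flipping locus (flip case). In the divisorial case, because $\rho(X/Y)=1$ and $(K_X+B+M)$ is negative on the contracted ray, I write $K_X+B+M = g^*(K_Y+B_Y+M_Y)+aE$ with $a>0$ and $E=\Exc(g)$; pulling back via $f\colon\tilde X \to X$ and comparing with $(g\circ f)^*(K_Y+B_Y+M_Y)$ yields $\tilde B_Y = \tilde B_X - a\,f^*E$, and taking $\mathrm{mult}_F$ gives the claim. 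The flip case is obtained by the negativity lemma applied on a common resolution over the flipping base, using that $-(K_X+B+M)$ and $K_Y+B_Y+M_Y$ are both ample over $Z$.

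The monotonicity immediately implies that $(Y,B_Y+M_Y)$ is g-lc and, crucially, that every g-lc place of $(Y,B_Y+M_Y)$ is a g-lc place of $(X,B+M)$ whose center on $X$ avoids the contracted/flipping locus. To supply the closed set of Definition \ref{def:g-dlt}, let $V\subset X$ witness the g-dltness of $(X,B+M)$. Put $V_Y := g(V)\cup g(\Exc(g))$ in the divisorial case, and $V_Y := g(V\setminus E)\cup E^+$ in the flip case (where $E\subset X$ and $E^+\subset Y$ denote the flipping and flipped loci). In either case, $g$ restricts to an isomorphism from an open subset of $X\setminus V$ onto $Y\setminus V_Y$, so $Y\setminus V_Y$ is smooth and $B_Y$ is simple normal crossing there. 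For any g-lc place $F$ of $(Y,B_Y+M_Y)$, the monotonicity forces $\Center_X(F)\not\subset V\cup\Exc(g)$, hence $\Center_Y(F)\not\subset V_Y$; moreover $\Center_Y(F)\setminus V_Y$ corresponds through the isomorphism to $\Center_X(F)\setminus V$, which is a log canonical center of $(X\setminus V,\,B|_{X\setminus V})$ by the g-dltness of $(X,B+M)$.

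The main obstacle is the monotonicity step in the flip case: unlike a divisorial contraction, there is no single formula $g^*(K_Y+B_Y+M_Y)+aE$ available, and one must work with the two small contractions $X\to Z\leftarrow Y$ on a common resolution and invoke the negativity lemma to transfer the strict inequality to precisely those valuations whose center meets the flipping locus. Once this monotonicity is established, the verification of the g-dlt conditions reduces to tracking centers through the isomorphism between the complements of the exceptional loci, which is essentially bookkeeping.
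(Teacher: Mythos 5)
Your proposal is correct and follows essentially the same strategy as the paper: reduce $\Qq$-factoriality to the standard statements, establish monotonicity of g-log discrepancies via the negativity lemma (with strict increase exactly when the center meets the contracted or flipping locus, using irreducibility of centers to pass from ``not in $V$ and not in $\Exc(g)$'' to ``not in $V\cup\Exc(g)$''), and then build the witnessing closed set $V_Y$ so that $Y\setminus V_Y$ is isomorphic to an open subset of $X\setminus V$. The only cosmetic difference is in the flip case: you take $V_Y=g(V\setminus E)\cup E^+$, whereas the paper takes $V_Y=c_2^{-1}(c_1(V))\cup L_2\cup c_2^{-1}(c_1(L_1))$ via the two small contractions $c_1\colon X\to W\leftarrow Y\colon c_2$; both choices are closed (for yours, because $g(V\setminus E)$ is closed in $Y\setminus E^+$ and its closure in $Y$ adds only points of $E^+$) and both give $Y\setminus V_Y\simeq X\setminus(V\cup E)$, so the argument goes through identically.
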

\begin{proof}
	Fix a general ample$/Z$ divisor $H$. As $X$ is klt (see Remark \ref{rmk: klt}), by Lemma \ref{lem:glcklt}, there exist $0<\epsilon\ll 1$ and $\Delta_{\epsilon}$, such that $\Delta_{\epsilon}\sim_{\Rr} B+M+\epsilon H$, and $(X,\Delta_{\epsilon})$ is $\Qq$-factorial klt. Moreover, $g$ is also either a divisorial contraction of a $(K_X+\Delta_{\epsilon})$-negative extremal ray or a $(K_X+\Delta_{\epsilon})$-flip. By \cite[Proposition 3.36, 3.37]{KM98}, $Y$ is $\Qq$-factorial. 
	
To show that the g-dltness is preserved, we use a similar argument as \cite[Lemma 3.44]{KM98}. Let $V$ be the closed subset $V\subset X$ in Definition \ref{def:g-dlt}. When $g$ is a divisorial contraction, set $V_Y= g(V \cup \Exc(g))$. Then $X \backslash (V\cup \Exc(g)) \simeq Y \backslash V_Y$, and thus $Y\backslash V_Y$ is smooth and $B|_{Y\backslash V_Y}$ is a simple normal crossing divisor. By the negativity lemma, for any divisor $E$, we have $a(E, Y, B_Y+M_Y) \geq a(E, X, B+M) \geq 0$, and $a(E, Y, B_Y+M_Y)> a(E, X, B+M)$ when $\Center_X(E)\subset \Exc(g)$ (see \cite[Lemma 3.38]{KM98}). When $a(E, Y, B_Y+M_Y)=0$, we have $a(E, X, B+M)=0$ and $\Center_X(E)\not\subset \Exc(g)$. Thus $\Center_X(E) \not\subset V \cup \Exc(g)$ by the definition of g-dlt. Hence $\Center_Y(E) \not\subset V_Y$. This shows that $(Y, B_Y+M_Y)$ is g-dlt.
	
	When $g$ is a flip, let $c_1: X \to W/Z, c_2: Y \to W/Z$ be the corresponding contractions. Let $L_1, L_2$ be the contraction loci of $c_1, c_2$ respectively. Set $V_X = V \cup L_1 \cup c_1^{-1}(c_2(L_2))$ and $V_Y = c_2^{-1}(c_1(V)) \cup L_2 \cup c_2^{-1}(c_1(L_1))$. Then $X \backslash V_X \simeq Y \backslash V_Y$, and thus $Y\backslash V_Y$ is smooth and $B|_{Y\backslash V_Y}$ is a simple normal crossing divisor. By the negativity lemma, for any divisor $E$, we have $a(E, Y, B_Y+M_Y) \geq a(E, X, B+M) \geq 0$, and $a(E, Y, B_Y+M_Y)> a(E, X, B+M)$ when $\Center_X(E)\subset L_1$ or $\Center_Y(E)\subset L_2$ (see \cite[Lemma 3.38, 3.44]{KM98}). When $a(E, Y, B_Y+M_Y)=0$, we have $a(E, X, B+M)=0$ and $\Center_Y(E)\not\subset L_2 \cup c_2^{-1}(c_1(L_1))$. Besides, by the definition of g-dlt, $\Center_X(E) \not\subset V$. Thus $\Center_Y(E) \not\subset V_Y$. This shows that $(Y, B_Y+M_Y)$ is g-dlt.
\end{proof}

Let $f: X \to Y$ be a projective morphism of varieties and $D$ be an $\Rr$-Cartier divisor on $X$, then $D$ is called a \emph{very exceptional} divisor of $f$ (\cite[Definition 3.2]{Sho03}) if (1) $f(D) \subsetneq Y$, and (2) for any prime divisor $P$ on $Y$ there is a prime divisor $Q$ on $X$ which is not a component of $D$ but $f(Q)=P$. Notice that if $f$ is birational, then any exceptional divisor is very exceptional. The point is that the negativity lemma also holds for very exceptional divisors (\cite[Lemma 3.3]{Birkar12}). The following Proposition is an easy consequence of the negativity lemma, which is a generalization of \cite[Theorem 3.5]{Birkar12} in the setting of g-lc pairs.

\begin{proposition}\label{prop: termination for very exceptional divisor}
	Let $(X/Z, B+M)$ be a g-lc pair with $X$ klt such that $K_X+B+M\equiv D_1-D_2$, where $D_1\geq 0,D_2\ge0$ have no common components. Suppose that $D_1$ is a very exceptional divisor over $Z$. Then any g-MMP$/Z$ on $K_X+B+M$ with scaling of an ample divisor$/Z$ either terminates to a Mori fiber space or contracts $D_1$ after finite steps. Moreover, if $D_2=0$, then the g-MMP terminates to a model $Y$ such that $K_Y+B_Y+M_Y \equiv 0/Z$.
\end{proposition}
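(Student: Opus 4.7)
The plan is to adapt the proof of \cite[Theorem 3.5]{Birkar12} to the generalized polarized setting. Since $X$ is klt and $(X,B+M)$ is g-lc, Lemma \ref{lem:glcklt} lets us run any g-MMP$/Z$ on $K_X+B+M$ with scaling of an ample divisor $A$ (possibly after passing to a small $\Qq$-factorialization of $X$), producing a sequence $X=X_0\dashrightarrow X_1\dashrightarrow\cdots$ of divisorial contractions and flips with monotonically non-increasing scaling numbers $\lambda_0\ge \lambda_1\ge\cdots$, unless a Mori fiber space appears (in which case we are in the first alternative of the conclusion). I will assume every step is a divisorial contraction or a flip and show that the MMP in fact terminates with a model $Y/Z$ on which $K_Y+B_Y+M_Y$ is nef$/Z$; the contraction of $D_1$ then follows from the negativity lemma for very exceptional divisors.

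For the termination, I would use Lemma \ref{lem:glcklt} a second time: for each sufficiently small $\epsilon>0$ there is a klt pair $(X,\Delta_\epsilon)$ with $K_X+\Delta_\epsilon\sim_\Rr K_X+B+M+\epsilon A/Z$, and $\Delta_\epsilon$ is big$/Z$ because it contains the ample summand $\epsilon A$. At any step where $\lambda_i>\epsilon$, a straightforward numerical computation using $(K_{X_i}+B_i+M_i)\cdot R_i<0$ and $(K_{X_i}+B_i+M_i+\lambda_i A_i)\cdot R_i=0$ gives $(K_{X_i}+\Delta_{\epsilon,i})\cdot R_i<0$, so the g-MMP step coincides with a $(K_X+\Delta_\epsilon)$-MMP step with scaling of $A$. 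By BCHM \cite{BCHM10} the latter klt MMP with scaling of an ample divisor terminates in finitely many steps, so only finitely many g-MMP steps can satisfy $\lambda_i>\epsilon$. Letting $\epsilon\to 0$, together with the monotonicity of $\lambda_i$, forces the g-MMP to terminate at a model $Y/Z$ with $K_Y+B_Y+M_Y$ nef$/Z$.

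On such a $Y$, write $K_Y+B_Y+M_Y\equiv D_{1,Y}-D_{2,Y}/Z$, where $D_{1,Y},D_{2,Y}\ge 0$ are the strict transforms (possibly zero if entirely contracted); they still have disjoint supports, and $D_{1,Y}$ remains very exceptional$/Z$ because divisorial contractions and flips preserve this property. The negativity lemma for very exceptional divisors \cite[Lemma 3.3]{Birkar12}, applied to the nef$/Z$ divisor $D_{1,Y}-D_{2,Y}$, then forces $D_{1,Y}=0$, so $D_1$ has been contracted by the MMP. For the ``moreover'' statement, if $D_2=0$ we also have $D_{2,Y}=0$ and hence $K_Y+B_Y+M_Y\equiv 0/Z$.

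The main obstacle is the termination step: the klt auxiliary boundary $\Delta_\epsilon$ depends on $\epsilon$, and the identification of a g-MMP step with a $(K_X+\Delta_\epsilon)$-MMP step holds only while $\lambda_i>\epsilon$, so the passage $\epsilon\to 0$ requires careful bookkeeping (or a stage-by-stage choice of $\epsilon$ below the current scaling number). Once full termination is established, the application of the very-exceptional negativity lemma to conclude $D_{1,Y}=0$ is immediate.
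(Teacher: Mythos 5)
Your termination claim is the gap, and it cannot be fixed by ``careful bookkeeping.'' The argument with $\Delta_\epsilon$ shows only that for each $\epsilon>0$ there are finitely many steps with $\lambda_i\geq\epsilon$, i.e.\ $\lambda_i\to 0$; it does not force the g-MMP to terminate. Once $\lambda_i<\epsilon$ the relevant extremal ray is $(K_X+\Delta_\epsilon)$-\emph{positive}, so \cite{BCHM10} says nothing about the subsequent steps, and an infinite sequence of flips with $\lambda_i\searrow 0$ is not excluded by any known result. Indeed, termination of g-MMP with scaling for a pseudo-effective non-big boundary is essentially what Theorem \ref{thm: weak zariski scaling ample} of this paper is trying to prove, so you cannot assume it here; and the proposition itself only claims termination in the special case $D_2=0$.

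The paper's proof sidesteps termination. It separates into $\mu\coloneqq\lim\lambda_i>0$ (where the sequence is literally a $(K_X+\Delta)$-MMP with scaling of $A$ for a single big klt $\Delta$ and BCHM gives finiteness, much as you argue) and $\mu=0$. In the case $\mu=0$, after finitely many steps the g-MMP consists only of flips, so for $i\gg0$ the divisor $K_{X_i}+B_i+M_i$ is a \emph{limit of movable$/Z$ divisors}: it is the limit as $j\to\infty$ of the pushforwards to $X_i$ of the nef$/Z$ divisors $K_{X_j}+B_j+M_j+\lambda_j A_j$, and these pushforwards are movable because $X_j\dashrightarrow X_i$ is an isomorphism in codimension one. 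The negativity lemma for very exceptional divisors (\cite[Lemma 3.3]{Birkar12}) requires only that $(D_{1,i}-D_{2,i})\cdot C\geq0$ for every prime divisor $S$ on $X_i$ and very general curve $C\subset S$, which follows from the limit-of-movable property and needs no nefness. One then concludes $D_{1,i}=0$ \emph{on $X_i$}, so $D_1$ was contracted after finitely many steps, with no assertion that the MMP terminates. To repair your argument, replace ``a model $Y$ on which $K_Y+B_Y+M_Y$ is nef'' with ``a sufficiently late $X_i$ on which $K_{X_i}+B_i+M_i$ is a limit of movable$/Z$ divisors''; the application of the very exceptional negativity lemma and the final ``moreover'' clause for $D_2=0$ (where $K_{X_i}+B_i+M_i\equiv0/Z$ is nef and the MMP genuinely stops) then go through as you wrote.
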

\begin{proof}
	Let $A$ be an ample$/Z$ divisor, by Lemma \ref{lem:glcklt}, we can run a g-MMP$/Z$ with scaling of $A$. Let $\nu_i=\inf\{t \in\Rr_{\geq0} \mid K_{X_i}+B_i+M_i+tA_i \text{~is~nef~over~}Z\}$ be the nef threshold in each step of the scaling. Set $\mu = \lim \nu_i$. 
	
	If $\mu>0$, then the g-MMP is also a g-MMP on $K_X+B+M+\mu A$. By Lemma \ref{lem:glcklt}, there exists a boundary $\Delta$, such that $K_X+B+M+\mu A \sim_\Rr K_X+\Delta$, and $(X, \Delta)$ is klt with $\Delta$ big. Then the $(K_X+\Delta)$-MMP with scaling terminates by \cite[Corollary 1.4.2]{BCHM10}. In this case, without loss of generality, we can assume that $K_X+B+M$ is nef$/Z$.
	
	 If $\mu=0$, after finite steps, we can assume that the g-MMP only consists of flips. Thus $K_X+B+M$ is a limit of movable$/Z$ $\Rr$-Cartier divisors. 
	 
	 In either case, for any prime divisor $S$ on $X$, and a very general curve $C\subset S$, we have $(K_X+B+M)\cdot C =(D_1-D_2)\cdot C\geq 0$. Since $D_1$ is a very exceptional divisor over $Z$, by the negativity lemma (\cite[Lemma 3.3]{Birkar12}), $D_2-D_1 \geq 0$, which implies that $D_1=0$. Hence the g-MMP contracts $D_1$ after finite steps. When $D_2=0$, on a model $Y$ such that $D_1$ is contracted, we have $K_Y+B_Y+M_Y \equiv 0/Z$.
\end{proof}
The proof of \cite[Lemma 4.5]{BZ16} gives the existence of g-dlt modifications. 
\begin{proposition}[G-dlt modification {\cite[Lemma 4.5]{BZ16}}]\label{prop: dlt}
	Let $(X, B + M)$ be a g-lc pair with data $\tilde X \xrightarrow{f} X \to Z$ and $\tilde M$. Then perhaps after replacing $f$ with a higher resolution, there exist a $\Qq$-factorial g-dlt pair $(X', B' + M')$ with data $\tilde X \xrightarrow{g} X' \to Z$ and $\tilde M$, and a projective birational morphism $\phi: X' \to X$ such that $K_{X'}+B'+M' = \phi^*(K_{X}+B+M)$. Moreover, each exceptional divisor of $\phi$ is a component of $\lf B'\rfloor$. We call $(X', B' + M')$ a g-dlt modification of $(X,B+M)$.
\end{proposition}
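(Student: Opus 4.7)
The plan is to construct the g-dlt modification by running a suitable relative MMP on a sufficiently high log resolution. First, replace $f:\tilde X\to X$ by a higher log resolution of $(X,B+M)$, so that $\tilde X$ is smooth, the support of $\tilde B$ together with the $f$-exceptional locus has simple normal crossings, and $\tilde M$ is an $\Rr$-Cartier nef$/Z$ divisor living on $\tilde X$. Write the crepant pullback
\[
K_{\tilde X}+\tilde B+\tilde M=f^{*}(K_{X}+B+M).
\]
Since $(X,B+M)$ is g-lc, every coefficient of $\tilde B$ is at most $1$. Let $E_1,\ldots,E_r$ be the $f$-exceptional prime divisors, with coefficients $b_1,\ldots,b_r\le 1$ in $\tilde B$, and set $\Gamma:=f^{-1}_{*}B+\sum_{k=1}^{r}E_k$. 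Then $(\tilde X,\Gamma+\tilde M)$ is a $\Qq$-factorial g-dlt pair (the data is log smooth with all boundary coefficients in $[0,1]$), and
\[
K_{\tilde X}+\Gamma+\tilde M=f^{*}(K_{X}+B+M)+F,\qquad F=\sum_{k=1}^{r}(1-b_k)E_k\ge 0,
\]
where $F$ is $f$-exceptional, hence very exceptional over $X$.

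Next, run a $(K_{\tilde X}+\Gamma+\tilde M)$-MMP$/X$ with scaling of a general ample$/X$ divisor. Since $\tilde X$ is smooth (hence klt as a variety), Lemma~\ref{lem:glcklt} guarantees that this g-MMP can be run, and by Lemma~\ref{lem:dltpreserved} the $\Qq$-factorial g-dlt property is preserved at each step. Over $X$ we have $K_{\tilde X}+\Gamma+\tilde M\equiv F/X$ with $F\ge 0$ very exceptional over $X$; applying Proposition~\ref{prop: termination for very exceptional divisor} with $D_1=F$ and $D_2=0$ shows that the MMP terminates at a model $\phi:X'\to X$ such that $K_{X'}+B'+M'\equiv 0/X$, where $B'$ and $M'$ denote the pushforwards of $\Gamma$ and $\tilde M$ respectively, and the datum $\tilde X\to X'$ exists because $\tilde X$ was chosen as a high enough model.

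Finally, the $\Rr$-Cartier divisor $K_{X'}+B'+M'-\phi^{*}(K_X+B+M)$ equals the strict transform $F_{X'}$, which is $\phi$-exceptional, effective, and numerically trivial over $X$; the negativity lemma (\cite[Lemma~3.39]{KM98}) forces $F_{X'}=0$, giving the crepant identity $K_{X'}+B'+M'=\phi^{*}(K_X+B+M)$. The $\phi$-exceptional divisors on $X'$ are precisely the strict transforms of those $E_k$ which survive the MMP; each such $E_k$ entered $\Gamma$ with coefficient $1$, and since MMP-steps preserve the coefficients of non-contracted prime divisors, each appears with coefficient $1$ in $B'$ and hence as a component of $\lfloor B'\rfloor$. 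The principal subtlety is to ensure that the intermediate g-MMP actually runs and terminates despite $(\tilde X,\Gamma+\tilde M)$ being only g-dlt rather than g-klt; this is precisely where the ``$X$ klt'' hypothesis in Proposition~\ref{prop: termination for very exceptional divisor} and the smoothness of the chosen log resolution come into play, together with the fact that the auxiliary big/nef decomposition supplied by Lemma~\ref{lem:glcklt} allows one to reduce each extremal contraction or flip to the classical klt case of \cite{BCHM10}.
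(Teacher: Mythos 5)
Your proof is correct and follows essentially the same route as the paper: you take a log resolution, put $\Gamma = f^{-1}_*B + \sum E_k$, observe $K_{\tilde X}+\Gamma+\tilde M = f^*(K_X+B+M)+F$ with $F=\sum(1-b_k)E_k=\sum a(E_k,X,B+M)E_k \ge 0$ exceptional over $X$, run a g-MMP$/X$ with scaling (using Lemma~\ref{lem:glcklt} and Lemma~\ref{lem:dltpreserved}), and invoke Proposition~\ref{prop: termination for very exceptional divisor} to conclude termination with $F$ contracted. The only cosmetic difference is your final negativity-lemma step to deduce $F_{X'}=0$, which is redundant: Proposition~\ref{prop: termination for very exceptional divisor} already gives that $D_1=F$ is contracted, so the pushforward of $F$ to $X'$ is zero directly, and $K_{X'}+B'+M'=\phi^*(K_X+B+M)$ follows at once.
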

\begin{proof}
	We may assume that $f$ is a log resolution of $(X,B+M)$. Let $E_i$ be an irreducible exceptional divisor of $f$. We have
\[
K_{\tilde{X}}+\tilde{B}+E+\tilde{M}=f^*(K_{X}+B+M)+E\equiv E/X,
\] where $E=\sum a_iE_i\ge0 $, and $a_i=a(E_i, X, B+M)$ is the g-log discrepancy. We can run a g-MMP$/X$ on $(K_{\tilde{X}}+\tilde{B}+E+\tilde{M})$ with scaling of an ample divisor. By Proposition \ref{prop: termination for very exceptional divisor}, the g-MMP terminates to $X'$, and $E$ is contracted. Thus $K_{X'}+B'+M'=\phi^*(K_{X}+B+M)$. As $(\tilde{X}, \tilde{B}+E+\tilde{M})$ is g-dlt, by Lemma \ref{lem:dltpreserved}, $(X', B' + M')$ is also $\Qq$-factorial g-dlt. By the construction of $E$, we see that each exceptional divisor of $\phi$ is a component of $\lf B'\rfloor$
\end{proof}

Although the MMP is expected to hold for g-pairs, abundance conjecture, finite generations of canonical rings and non-vanishing conjecture all fail for g-pairs (see \cite[\S 3]{Birkarhuweak14} for discussions). However, as for non-vanishing conjecture, one can still ask under the numerical sense. In general, abundance conjecture does not hold under the numerical sense.
\begin{example}
	Let $X$ be $\Pp^2$ blown up 9 points in sufficiently general positions, $M=-2K_X$. Then $K_X+M=-K_X$ is nef, but there is no semiample divisor $N$ such that $K_X+M\equiv N$,  \cite{bauer2004}. 
\end{example}
We ask the following question.
\begin{conjecture}[Weak non-vanishing \& weak abundance for g-pairs]
	Let $(X/Z,B+M)$ be a $\Qq$-factorial NQC g-dlt pair. Suppose that $K_X+B+M$ is nef, then 
	
	(1) there exists an effective $\Rr$-divisor $N$, such that $K_X+B+M\equiv N$,
	
	(2) there exists $0\le t\le 1$, and a semi-ample $\Rr$-divisor $D$, such that $K_X+B+tM\equiv D$.
\end{conjecture}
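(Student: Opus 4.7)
The strategy is to reduce both statements to known or conjectural results for ordinary (non-generalized) log pairs, exploiting the NQC decomposition $\tilde M \equiv \sum_i r_i \tilde M_i$ with $r_i \in \Rr_{>0}$ and $\tilde M_i$ $\Qq$-Cartier nef$/Z$. Because $K_X+B+M$ is already nef, there is no MMP to run, so the task is purely to exhibit an effective (respectively semi-ample) divisor in the numerical class. Throughout, I would first fix a g-dlt modification so that singularities behave well under the manipulations below.

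For part (1), I would split into two cases. If $M$ is big$/Z$, write $\tilde M \sim_\Rr \epsilon H + F$ on $\tilde X$ with $H$ ample$/Z$ and $F \geq 0$; pushing to $X$ gives $M \equiv \epsilon A + E$ with $A$ ample and $E \geq 0$. Then $K_X+B+M-E \equiv K_X+B+\epsilon A$, and a perturbation in the spirit of Lemma \ref{lem:glcklt} produces an ordinary klt pair $(X,\Delta_\epsilon)$ with $K_X+\Delta_\epsilon$ nef, so ordinary non-vanishing supplies an effective $N_\epsilon$ with $K_X+\Delta_\epsilon \equiv N_\epsilon$, whence $N_\epsilon+E$ is the divisor we want. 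If $M$ is not big, I would apply a canonical bundle formula along the Iitaka fibration associated to one of the $\tilde M_i$ of maximal Iitaka dimension to descend $(X/Z,B+M)$ to a g-lc pair on a lower-dimensional base, and then argue by induction on $\dim X$; the NQC structure is crucial here because it lets one work with individual rational pieces $\tilde M_i$ rather than with $\tilde M$ as a whole.

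For part (2), the parameter $t$ is unavoidable: the blow-up of $\Pp^2$ at nine very general points already shows $t=1$ can fail, while $t=0$ is blocked by the fact that ordinary abundance for $(X,B)$ is itself open. I would set
\[
t_0 \coloneqq \inf\{\, t \in [0,1] \mid K_X+B+tM \text{ is numerically equivalent to a semi-ample }\Rr\text{-divisor}\,\},
\]
and try to prove both that this infimum is attained and that it lies in $[0,1]$. Attainment of the infimum should follow from a closedness/ACC argument using the NQC rationality of $M$, in the spirit of the global ACC of \cite{BZ16}, combined with a numerical variant of the basepoint-free theorem applied at the critical value $t_0$. The main obstacle is that (2) genuinely encodes an abundance-type assertion, which remains open in full generality even for ordinary log pairs; any complete proof will therefore need to build on substantial progress on ordinary abundance in dimensions $\leq \dim X$, and the most delicate part is controlling how $t_0$ depends on the NQC coefficients $r_i$ and on the canonical bundle formula used to descend the Iitaka fibration of $M$.
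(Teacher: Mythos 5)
This statement is labeled a \emph{conjecture} in the paper, introduced with ``We ask the following question,'' and the paper offers no proof of it. There is therefore nothing in the paper to compare your proposal against: you are attempting to prove something the authors state as genuinely open.

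Your proposal itself is not a proof and cannot become one along the lines you sketch, because at every turn it reduces to statements that are themselves open conjectures. For part (1), the big case reduces to numerical non-vanishing for $\Qq$-factorial klt pairs of arbitrary dimension, which is open (you invoke ``ordinary non-vanishing'' as though it were available). The non-big case is considerably more problematic: a nef $\Qq$-Cartier divisor $\tilde M_i$ need not have a well-defined Iitaka fibration as a morphism (it may fail to be semi-ample, or may even be numerically trivial, in which case there is no fibration to use at all), so the ``canonical bundle formula along the Iitaka fibration of one of the $\tilde M_i$'' does not, in general, produce a lower-dimensional g-pair to induct on. For part (2), you explicitly acknowledge that the assertion encodes an abundance statement that is open even for ordinary pairs; defining a threshold $t_0$ and hoping for a closedness/ACC argument does not close that gap, and you would additionally need to show $t_0 \le 1$, which already contains the numerical non-vanishing issue from (1).

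There is also a local slip in the big case of (1) as written: after extracting $M \equiv \epsilon A + E$ with $E\ge 0$, the divisor $K_X+B+\epsilon A \equiv K_X+B+M - E$ need not be nef or even pseudo-effective, so you cannot directly apply non-vanishing to it. The correct route, when $M$ is big, is the one used in the proof of Lemma~\ref{lem:glcklt}: construct $\Delta \sim_\Rr B+M$ with $(X,\Delta)$ klt and apply (conjectural) numerical non-vanishing to the nef divisor $K_X+\Delta$ itself, without subtracting $E$. That fixes the slip but of course does not remove the dependence on the open non-vanishing conjecture. In short: you have sketched a reasonable reduction strategy for a conjecture, not a proof, and it is important to be clear that the statement is open in the paper as well.
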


\subsection{Length of extremal rays}\label{subsection: length of extremal rays and its applications}

The bound on the length of extremal rays also holds for g-pairs. Following \cite[Definition 1]{Sho09}, we define extremal curves.

\begin{definition}[Extremal curve]\label{def: extremal curve}
	An irreducible curve $C$ on $X/Z$ is called \emph{extremal} if it generates an extremal ray $R = \Rr_+[C]$ of the Kleiman-Mori cone $\overline{\rm NE}(X/Z)$, and has the minimal degree for this ray (with respect to any ample divisor).
\end{definition}

By definition, there exists an ample$/Z$ divisor $H$, such that 
\[
H \cdot C = \min\{H\cdot\Gamma \mid [\Gamma]\in R\}.
\] 

We have the following result on the length of extremal rays. We thank Chen Jiang for showing us the following simple proof. 
\begin{proposition}[The length of extremal rays for g-pairs]\label{prop: length of extremal rays for g-pair}
	Let $X$ be a $\Qq$-factorial klt variety, and $(X/Z,B+M)$ be a g-lc pair. Then for any $(K_X+B+M)$-negative extremal ray $R/Z$, there exists a curve $C$, such that $[C]\in R$, and $$0<-(K_X+B+M)\cdot C \leq 2\dim X.$$
\end{proposition}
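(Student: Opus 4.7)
The plan is to reduce to the classical length-of-extremal-rays statement for the ordinary lc pair $(X,B)$, exploiting the fact that the nef part $M$ contributes nonnegatively on every curve. Two ingredients are required: (i) the pair $(X,B)$ is automatically lc, and (ii) $M$ descends to a nef $\Rr$-Cartier divisor on $X$.

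Since $X$ is $\Qq$-factorial, the divisor $M=(K_X+B+M)-K_X-B$ is $\Rr$-Cartier on $X$. The negativity-lemma computation recalled between Remark~\ref{rmk: klt} and Definition~\ref{def:g-dlt} then yields
\[
f^{*}M=\tilde M+E,\qquad E\ge 0 \text{ and } f\text{-exceptional}.
\]
Combining with $K_{\tilde X}+\tilde B+\tilde M=f^{*}(K_X+B+M)$ gives $K_{\tilde X}+(\tilde B-E)=f^{*}(K_X+B)$, so the log discrepancy of any prime divisor over $X$ with respect to $(X,B)$ dominates its g-log discrepancy with respect to $(X,B+M)$; since the latter are $\ge 0$, the pair $(X,B)$ is lc. Next, for any curve $C\subset X$, after replacing $\tilde X$ by a sufficiently high resolution (which the excerpt explicitly permits) we may take a lift $\tilde C\subset\tilde X$ dominating $C$ with $\tilde C\not\subset\Supp E$; the projection formula then gives $M\cdot C=\tilde M\cdot\tilde C+E\cdot\tilde C\ge 0$, so $M$ is nef$/Z$.

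Now let $R$ be a $(K_X+B+M)$-negative extremal ray of $\overline{\mathrm{NE}}(X/Z)$. By nefness of $M$, $M\cdot R\ge 0$, hence $(K_X+B)\cdot R\le(K_X+B+M)\cdot R<0$; in particular $R$ is also $(K_X+B)$-negative. Applying the cone theorem with length bound for the $\Qq$-factorial lc pair $(X,B)$ on the klt variety $X$ (Ambro--Fujino; cf.\ Fujino's fundamental theorems for the log minimal model program) produces a rational curve $C$ with $[C]\in R$ and $0<-(K_X+B)\cdot C\le 2\dim X$. Combining with $M\cdot C\ge 0$,
\[
0\;<\;-(K_X+B+M)\cdot C\;\le\;-(K_X+B)\cdot C\;\le\;2\dim X,
\]
which is the desired bound. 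The only technically delicate point is verifying that $M$ descends to a nef divisor on $X$ (essentially a standard consequence of the negativity lemma once $M$ is $\Rr$-Cartier); everything else is a transparent reduction to the lc cone theorem.
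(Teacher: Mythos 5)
There is a genuine gap: the claim that $M=f_*\tilde M$ is nef on $X$ is false in general, and the argument you give for it does not work. You take the strict (or some) transform $\tilde C$ of a curve $C\subset X$ and ask it to avoid $\Supp E$, where $f^*M=\tilde M+E$. But when $\dim X\ge 3$ the image $f(\Exc(f))$ of the exceptional locus, although of codimension $\ge 2$, can contain curves; for such a curve $C$ every lift $\tilde C$ lies inside $\Exc(f)$, and typically inside $\Supp E$, so $E\cdot\tilde C$ can be negative. Passing to a higher model does not help, because the obstruction lives on $X$ itself. A concrete counterexample: let $X\dashrightarrow X^{+}$ be a klt flip over $W$ with $X$ $\Qq$-factorial klt, take a common resolution $p\colon\tilde X\to X$, $q\colon\tilde X\to X^{+}$, and set $\tilde M=q^{*}A$ for $A$ ample on $X^{+}$. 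Then $\tilde M$ is nef, but $p_{*}\tilde M$ is the strict transform of $A$ on $X$, which is $\Rr$-Cartier yet \emph{not} nef (otherwise the flip would be trivial). So the nef part of a g-pair genuinely does not descend, and the whole reduction to the lc cone theorem for $(X,B)$ collapses at this step. (For this same reason, $(K_X+B)\cdot R<0$ does not follow from $(K_X+B+M)\cdot R<0$.)

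The paper's argument sidesteps this entirely. It never asserts nefness of $M$ on $X$; instead, using Lemma~\ref{lem:glcklt} it produces, for each small $\epsilon>0$ and a suitable ample $H$, a \emph{klt} pair $(X,\Delta_\epsilon)$ with $K_X+\Delta_\epsilon\sim_{\Rr} K_X+B+M+\epsilon H$, applies Kawamata's length bound to that klt pair to get a rational curve $\Gamma_\epsilon$ in $R$ with $0<-(K_X+B+M+\epsilon H)\cdot\Gamma_\epsilon\le 2\dim X$, and then uses the minimality property of an extremal curve $C$ (so that $H\cdot C/H\cdot\Gamma_\epsilon\le 1$) to transfer the bound to $C$ and let $\epsilon\to 0$. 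If you want to keep your overall strategy, the correct replacement for your nefness claim is this perturbation trick: absorb $M$ (plus a small ample) into a klt boundary on $X$ rather than try to descend $M$ as a nef divisor.
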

 
\begin{proof}
	Let $C$ be any extremal curve such that $[C]\in R$. By definition, there exists an ample$/Z$ divisor $H$, such that 
	\[
	H \cdot C = \min\{H\cdot\Gamma \mid [\Gamma]\in R\}.
	\]
	By Lemma \ref{lem:glcklt}, for any $1 \gg \epsilon>0$, there exists a klt pair $(X, \Delta_\epsilon)$ with $K_X+\Delta_\epsilon \sim_\Rr K_X+B+M+\epsilon H$, such that $R$ is also a $(K_X+\Delta_\epsilon)$-negative extremal ray. By Kawamata's length of extremal rays, \cite{Kawamata91}, there exists a rational curve $\Gamma_\epsilon$, such that $[\Gamma_\epsilon]\in R$, and
	$$0<-(K_X+B+M+\epsilon H) \cdot \Gamma_\epsilon \leq 2\dim X.$$ 
	By the definition of extremal curve, we have $\frac{H \cdot C}{H \cdot \Gamma_{\epsilon}} \leq 1$, thus
	\begin{align*}
	-(K_X+B+M+\epsilon H) \cdot C &= -((K_X+B+M+\epsilon H) \cdot \Gamma_\epsilon )(\frac{H \cdot C}{H \cdot \Gamma_{\epsilon}})\\
	& \leq 2\dim X.
	\end{align*}
	Let $\epsilon \to 0$, we finish the proof.
\end{proof}

\subsection{Shokurov's polytope}\label{subsection: Shokurov polytope} In this subsection, we have the following setups. Let $X$ be a $\Qq$-factorial klt variety, $(X/Z, B + M)$ be an NQC g-lc polarized pair with data $\tilde X \xrightarrow{f} X \to Z$ and $\tilde M$. Suppose $\tilde M=\sum \mu_j \tilde{M_j}$, where $\tilde{M_i}$ is a nef $\Qq$-Cartier divisor and $\mu_i\ge 0$. Let $M_j$ be the pushforward of $\tilde{M_j}$ on $X$. Let $B=\sum b_iB_i$ be the prime decomposition.  

Consider the vector space 
\[
V \coloneqq (\oplus_i \Rr B_i) \oplus (\oplus_j \Rr M_j).
\] If $x_i, y_j \geq 0$, then $(X/Z, \sum_i x_i B_i + \sum_j y_j M_j)$ is a g-pair with data $\tilde X \to X$ and a nef divisor $\sum_j y_j \tilde M_j$. Let $\Delta=\sum d_iB_i$, $N=\sum \nu_j M_j$, and set the Euclidean norm
\[
||B+M-\Delta-N||=(\sum_i (b_i-d_i)^2+\sum_j (\mu_j-\nu_j)^2)^{1/2}.
\] 

The set of NQC g-lc pairs
	\[
	\Ll(B,M)\coloneqq \{\sum_i x_i B_i + \sum_j y_j M_j \in V \mid (X, \sum_{i} x_i B_i + \sum_j y_j M_j) \text{~is g-lc~}\}
	\] 
	is a rational polytope (may be unbounded). In fact, we may assume that $f:\tilde{X}\to X$ is a log resolution of $(X/Z,B+M)$. Given any $\Delta+N\in V$ with $\Delta \in (\oplus_i \Rr_{\ge0} B_i)$ and $N \in (\oplus_j \Rr_{\ge0} M_j)$, if we write 
	\[
	K_{\tilde{X}}+\tilde{\Delta}+\tilde{N}=f^{*}(K_X+\Delta+N),
	\]
	then the coefficients of $\tilde{\Delta}$ are \emph{rational} affine linear functions of the coefficients of $\Delta$ and $N$. The g-lc condition is the same as that such coefficients are chosen from $[0,1]$. Hence these rational affine linear functions cut out a rational polytope.

\begin{lemma}\label{lem:lengthbdd} Under the above notation. Let $X$ be a $\Qq$-factorial klt variety, and $(X/Z,B+M)$ be an NQC g-lc pair. 
	
	(1) Then there exists a positive real number $\alpha>0$, such that for any extremal curve $\Gamma$$/Z$, if $(K_X+B+M)\cdot \Gamma>0$, then $(K_X+B+M)\cdot \Gamma>\alpha$.
	
	(2) There exists a positive number $\delta>0$, such that if $\Delta+N\in \Ll(B,M)$, $||(\Delta-B)+(N-M)||<\delta$, and $(K_X+\Delta+N)\cdot R\le 0$ for an extremal ray $R/Z$, then $(K_X+B+M)\cdot R\le 0$. 
\end{lemma}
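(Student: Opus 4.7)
Both parts rest on a discreteness of intersection numbers of extremal curves with a fixed finite family of $\Qq$-Cartier divisors, extracted from the $\Qq$-factoriality of $X$ and the NQC hypothesis; (2) then follows from (1) by a Cauchy--Schwarz estimate.

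\emph{Setup.} Since $X$ is $\Qq$-factorial klt, $K_X$ and every prime component $B_i$ of $B$ are $\Qq$-Cartier; because $\tilde M$ is NQC, after pushing down $M\equiv\sum_j\mu_j M_j/Z$ with each $M_j$ $\Qq$-Cartier on $X$. Pick a positive integer $m$ with $mK_X$, $mB_i$, $mM_j$ all Cartier, so that for every curve $\Gamma$ on $X$ the individual intersections $mK_X\cdot\Gamma$, $mB_i\cdot\Gamma$, $mM_j\cdot\Gamma$ are integers.

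\emph{Sketch for (1).} The crux is the finiteness claim: for each fixed $T>0$, the set of integer tuples $(mK_X\cdot\Gamma, mB_i\cdot\Gamma, mM_j\cdot\Gamma)$ as $\Gamma$ ranges over extremal curves $\Gamma/Z$ with $|(K_X+B+M)\cdot\Gamma|\le T$ is finite. Granting this, the real numbers $(K_X+B+M)\cdot\Gamma$ take only finitely many positive values below $T$, and $\alpha$ is their minimum. To prove the finiteness, I fix an ample $\Qq$-Cartier $H/Z$ and, via Lemma~\ref{lem:glcklt}, a klt pair $(X,\Delta)$ with $K_X+\Delta\sim_\Rr K_X+B+M+H$; Proposition~\ref{prop: length of extremal rays for g-pair} (or its klt precursor) then bounds the $H$-degree of an extremal curve in every $(K_X+\Delta)$-negative extremal ray, hence bounds all its integer intersections with the listed divisors. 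Extremal rays where $(K_X+B+M)\cdot\Gamma\in(0,T]$ are handled by a small rational perturbation: replacing $M$ by $M-\varepsilon H$ with $\varepsilon$ rational and small, the NQC decomposition keeps the new pair inside the NQC g-lc class, and the ray becomes $(K_X+B+M-\varepsilon H)$-negative, so the same $H$-degree bound reapplies.

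\emph{Sketch for (2).} Let $\alpha$ be as in (1). By the finiteness in (1) the quantity
\[
C\coloneqq \max_{\Gamma}\sqrt{\sum_i(B_i\cdot\Gamma)^2+\sum_j(M_j\cdot\Gamma)^2},
\]
with $\Gamma$ ranging over extremal curves satisfying $(K_X+B+M)\cdot\Gamma\in[-1,1]$, is finite. Set $\delta\coloneqq \alpha/(C+1)$. For $\Delta+N\in\Ll(B,M)$ with $\|(B-\Delta)+(M-N)\|<\delta$, an extremal ray $R$ with $(K_X+\Delta+N)\cdot R\le 0$, and an extremal curve $\Gamma\in R$, Cauchy--Schwarz gives
\[
(K_X+B+M)\cdot\Gamma \le (K_X+\Delta+N)\cdot\Gamma+\delta\sqrt{\sum_i(B_i\cdot\Gamma)^2+\sum_j(M_j\cdot\Gamma)^2}\le 0+\delta C<\alpha,
\]
provided $\Gamma$ lies in the range controlled by $C$; by (1) this forces $(K_X+B+M)\cdot\Gamma\le 0$, hence $(K_X+B+M)\cdot R\le 0$. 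For extremal curves outside this range, the same inequality combined with the boundedness of the perturbation $\delta\cdot\|\cdot\|$ by $\alpha$ keeps the relevant $\Gamma$ inside the $C$-controlled region, closing the argument.

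\emph{Main obstacle.} The hard step is the finiteness/discreteness claim in (1), since Proposition~\ref{prop: length of extremal rays for g-pair} only directly controls extremal rays on the negative side of the Kleiman--Mori cone. The NQC hypothesis is essential here: it permits a small rational perturbation of $M$ which preserves the g-lc generalized structure, turning small-positive extremal rays into negative ones on a companion NQC g-lc pair, thereby restoring the applicability of Kawamata's bound; without NQC this maneuver would leave the category of pairs considered in the paper.
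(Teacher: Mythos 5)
There is a genuine gap in both parts, rooted in the same unproven finiteness claim. You assert that for a fixed $T$, the set of integer tuples $(mK_X\cdot\Gamma,\,mB_i\cdot\Gamma,\,mM_j\cdot\Gamma)$ over extremal curves $\Gamma/Z$ with $|(K_X+B+M)\cdot\Gamma|\le T$ is finite, but neither ingredient you invoke actually delivers this. Kawamata's length bound controls the \emph{adjoint} degree $-(K_X+\Delta)\cdot C$ of a suitable rational curve in a $(K_X+\Delta)$-negative ray; it does not bound the $H$-degree of that curve or of the extremal curve $\Gamma$, and extremal curves generating negative rays can have arbitrarily large $H$-degree. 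And the perturbation $M\mapsto M-\varepsilon H$ leaves the category of NQC g-pairs: for the new pair to be g-lc one needs $\tilde M-\varepsilon f^*H$ to be nef on the high model, which a nef $\tilde M$ does not guarantee. So the set of tuples is not shown to be finite, and the minimum $\alpha$ in (1) is not extracted. The same missing finiteness is what makes the constant $C$ in (2) possibly infinite, and your closing sentence acknowledges but does not fill the circularity: the Cauchy--Schwarz estimate needs $C$ to control $\Gamma$, yet one cannot know $\Gamma$ is in the $C$-controlled region without already having the bound.

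The paper avoids this trap by not attempting to bound individual intersection numbers at all. Instead it uses that $\Ll(B,M)$ is a rational polytope to write $K_X+B+M=\sum_{j} a_j(K_X+D_j+N_j)$ with $\sum a_j=1$, each $(X/Z,D_j+N_j)$ NQC g-lc, and $m(K_X+D_j+N_j)$ Cartier. Each summand $(K_X+D_j+N_j)\cdot\Gamma$ then lies in $\tfrac1m\Zz$, and when negative it is $\ge -2\dim X$ by Proposition~\ref{prop: length of extremal rays for g-pair} together with the minimal-degree property of the extremal curve $\Gamma$. Under the constraint $(K_X+B+M)\cdot\Gamma<1$ this confines all the summands to a bounded discrete set, giving finitely many possible values and hence $\alpha$. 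For (2) the paper runs a convexity argument directly on the polytope: writing $\Delta+N=r(B+M)+s(\Delta'+N')$ with $\Delta'+N'$ on the boundary of a bounded rational sub-polytope, the length bound applied to the g-lc pair $(X,\Delta'+N')$ gives $(K_X+\Delta'+N')\cdot\Gamma\ge -2\dim X$, and combining with $(K_X+B+M)\cdot\Gamma>\alpha$ forces $(K_X+\Delta+N)\cdot\Gamma>0$ once $r$ is close enough to $1$. If you want to salvage a Cauchy--Schwarz-style argument, the decisive realization is that one must control the adjoint degree of \emph{auxiliary NQC g-lc pairs obtained by moving the coefficients inside $\Ll(B,M)$}, not the raw intersection numbers of $B_i$ and $M_j$ with $\Gamma$.
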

\begin{proof}(1) Because $\Ll(B, M)$ is a rational polytope, there exist $r,m \in \Nn, a_j \in \Rr_{\geq 0}$ and $D_j \in (\oplus_i \Qq_{\ge0} B_i), N_j \in (\oplus_j \Qq_{\ge0} M_j)$ such that $\sum_{j=1}^r a_j=1$, 
	\[
	(K_X+B+M)=\sum_{j=1}^r a_j(K_X+D_j+N_j),
	\]
	$(X/Z,D_j+N_j)$ is NQC g-lc, and $m(K_X+D_j+N_j)$ is Cartier. By Proposition \ref{prop: length of extremal rays for g-pair} and the definition of the extremal curve (see Definition \ref{def: extremal curve}), if $(K_X+D_j+N_j)\cdot \Gamma<0$, then $(K_X+D_j+N_j)\cdot \Gamma\ge -2\dim X$. Hence, if $(K_X+D_j+N_j)\cdot \Gamma\le 1$, then there are only finitely many possibilities for the intersection numbers $(K_X+D_j+N_j)\cdot \Gamma$. Therefore there are also finite many intersection numbers for $(K_X+B+M) \cdot \Gamma <1$, and the existence of $\alpha$ is clear.
	
	(2) Let $\mathbb B(B+M, 1) \subset V$ be a ball centered at $B+M$ with radius $1$. Because $\Ll(B,M)$ is a rational polytope which may be unbounded, we choose a \emph{bounded} rational polytope $\Ll'(B,M) \supset \Ll(B,M) \cap \mathbb B(B+M, 1)$. First, choose $\delta<1$. Let $\Delta'+N'$ be the intersection point of the line connecting $B+M$ and $\Delta+N$ on the boundary of $\Ll'(B,M)$, such that $\Delta+N$ lies inside the interval between $\Delta'+N'$ and $B+M$ (if this line lies on the boundary of $\Ll'(B,M)$, we choose $\Delta'+N'$ to be the furthest intersection point). There exist $r,s \in \Rr_{\geq 0}$, such that $r+s=1$, and $\Delta+N=r(B+M)+s(\Delta'+N')$. Suppose that there is an extremal ray $R/Z$ such that $(K_X+\Delta+N)\cdot R\le 0$ and $(K_X+B+M)\cdot R> 0$. Let $\Gamma$ be an extremal curve of $R$. By (1) there exists $\alpha>0$, such that $(K_X+B+M)\cdot \Gamma>\alpha$. If 
	\[
	r>\frac{2s\dim X}{\alpha}, \text{~or equivalently~} r>\frac{2\dim X}{2\dim X+\alpha},
	\] then by the proof of Proposition \ref{prop: length of extremal rays for g-pair}, we have
	\begin{align*}
	(K_X+\Delta+N)\cdot \Gamma=&r(K_X+B+M)\cdot \Gamma +s(K_X+\Delta'+N')\cdot\Gamma\\
	>&r\alpha-2s\dim X>0,
	\end{align*} which is a contradiction.
	
From the above discussion, we can construct the desired $\delta$ as follows. Let $l>0$ be the minimal non-zero distance from $B+M$ to the boundary of $\Ll'(B,M)$. We choose $0<\delta \ll1$ such that
	\[
	\frac{l-\delta}{l}>\frac{2\dim X}{2\dim X+\alpha}.
	\] Then by the choice of $l$, we see that $r=\frac{l'-\delta}{l'}\geq \frac{l-\delta}{l}$, where $l'$ is the distance from $B+M$ to $\Delta'+N'$. This $\delta$ satisfies the requirement.
\end{proof}
\begin{example}\label{rem:reasonNQC}
	Without the NQC assumption, Lemma \ref{lem:lengthbdd}(1) no longer holds true. Indeed, let $E$ be a general elliptic curve, $X=E\times E$. Fix a point $P\in E$, consider the divisor classes $f_1=[\{P\}\times E],f_2=[E\times\{P\}],\delta=[\Delta]$, where $\Delta\subset E\times E$ is the diagonal. According to \cite[Lemma 1.5.4]{LazarsfeldPositivity1}, $N=f_1+\sqrt{2}f_2+(\sqrt{2}-2)\delta$ is nef. It is not hard to show that for any $\epsilon>0$, there exists a curve $C$, such that $N\cdot C<\epsilon$ (see \cite{Rnefmathoverflow0508}).
\end{example}

Let $\dim (\oplus_i \Rr B_i) \oplus (\oplus_j \Rr M_j)=d$. For $k \in \Qq$, let
\[
	[0, k]^d \coloneqq [0, k] \times \cdots \times [0,k] \subset \Rr^d.\] If $\{R_t\}_{t\in T}$ is a family of extremal rays of $\overline{\rm NE}(X/Z)$, set 
\[
	\nN_{T}=\{\Delta+N\in \Ll(B,M) \mid (K_X+\Delta+N)\cdot R_t\ge0~\forall~ t\in T\}.
	\] 
	
By the same argument as \cite{Birkar11} (cf. the original proof in \cite{Sho09}), we have the following result for NQC g-lc pairs.

\begin{proposition}\label{le: decomposition to nef Cartier divisors} Under the above notation. Let $X$ be a $\Qq$-factorial klt variety, and $(X/Z,B+M)$ be an NQC g-lc pair. Then the set 
	\[
	\nN_{T} \cap [0, k]^d
	\]
	is a rational polytope for any $k\in\Qq$. In particular, if $K_X+B+M$ is nef$/Z$, then there exist NQC g-lc pairs $(X, D_i+N_i)$ with nef part $N_i$ and boundary part $D_i$, and $a_i \in \Rr_{>0}$,  such that 
	\[
	K_X+B+M=\sum_i a_i (K_X+D_i+N_i)\] with $\sum_{i} a_i=1$. Moreover, there exists $m\in \Nn$, such that $m(K_X+D_i+N_i)$ is a nef$/Z$ and Cartier divisor for each $i$.
\end{proposition}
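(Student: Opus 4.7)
\noindent The plan is to adapt the classical Shokurov polytope argument, following the version in \cite{Birkar11} (cf.\ \cite{Sho09}), to the NQC g-pair setting. The strategy has two parts: establish that $\nN_T\cap[0,k]^d$ is a rational polytope, via rationality of the individual half-space constraints combined with a local finiteness statement; then extract the decomposition of $B+M$ using Carath\'eodory's theorem.

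Fix $k\in\Qq$ and set $P:=\Ll(B,M)\cap[0,k]^d$, a bounded rational polytope. For each $t\in T$ choose an extremal curve $C_t$ representing $R_t$ (Definition \ref{def: extremal curve}). Since $X$ is $\Qq$-factorial klt (Remark \ref{rmk: klt}), both $K_X$ and every prime component $B_i$ are $\Qq$-Cartier, and the NQC hypothesis makes each $\tilde M_j$ $\Qq$-Cartier, hence each pushforward $M_j$ is $\Qq$-Cartier on $X$ by $\Qq$-factoriality. Therefore
\[
\bigl(K_X+\textstyle\sum_i d_i B_i+\sum_j\nu_j M_j\bigr)\cdot C_t\ge 0
\]
is a rational affine half-space in the coordinates $(d_i,\nu_j)$ on $V$, and $\nN_T\cap P$ is the intersection of $P$ with the family of such half-spaces indexed by $T$.

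The crux is to show that this a priori infinite family reduces to a finite one. For $p=B_0+M_0\in\nN_T\cap P$, apply Lemma \ref{lem:lengthbdd}(2) to the NQC g-lc pair $(X,B_0+M_0)$ to obtain $\delta_p>0$ such that any extremal ray $R/Z$ with $(K_X+q)\cdot R\le 0$ for some $q$ of distance $<\delta_p$ from $p$ already satisfies $(K_X+B_0+M_0)\cdot R\le 0$, forcing $=0$ since $p\in\nN_T$. Thus, within $B(p,\delta_p)\cap P$, only the rays $R_t$ whose wall $\{(K_X+\cdot)\cdot R_t=0\}$ passes through $p$ can cut out a proper half-space. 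Using the length bound of Lemma \ref{lem:lengthbdd}(1) together with induction on $d=\dim V$, applied to the restriction of the whole setup to each rational wall through $p$ (which lowers the effective dimension), one concludes that only finitely many such walls occur locally. Compactness of $P$ then covers it by finitely many such balls, exhibiting $\nN_T\cap P$ as the intersection of $P$ with a finite collection of rational half-spaces, i.e.\ a rational polytope.

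For the ``in particular'' statement, take $T$ to be all extremal rays of $\overline{\rm NE}(X/Z)$ and $k\in\Qq$ large enough that $B+M\in[0,k]^d$. Nefness of $K_X+B+M$ gives $B+M\in\nN_T\cap[0,k]^d$, which is a rational polytope by what precedes, so Carath\'eodory's theorem yields
\[
B+M=\sum_{i} a_i(D_i+N_i),\qquad a_i\in\Rr_{>0},\ \sum_{i} a_i=1,
\]
with each $D_i+N_i$ a rational point of $\nN_T\cap[0,k]^d$. Each $(X,D_i+N_i)$ is then g-lc with $K_X+D_i+N_i$ nef$/Z$, and NQC because its nef part is a non-negative rational combination of the nef $\Qq$-Cartier divisors $\tilde M_j$. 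Clearing denominators uniformly furnishes $m\in\Nn$ such that every $m(K_X+D_i+N_i)$ is a nef Cartier divisor. The main obstacle throughout is the local finiteness of walls, and this is precisely where the NQC assumption is indispensable: Example \ref{rem:reasonNQC} shows that without it the length bound of Lemma \ref{lem:lengthbdd}(1) can fail, allowing infinite cascades of walls to accumulate and destroy the polyhedral structure.
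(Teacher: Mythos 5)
Your high-level strategy follows the paper's (Shokurov polytope via the length-of-extremal-rays bound, compactness, induction on dimension), and the ``in particular'' deduction via Carath\'eodory and clearing denominators is fine. However, the core step of the rationality argument has a gap. You assert ``only finitely many such walls occur locally,'' justified by ``Lemma \ref{lem:lengthbdd}(1) together with induction on $d=\dim V$, applied to the restriction of the whole setup to each rational wall through $p$.'' Restricting to a wall through $p$ and applying the induction hypothesis there tells you that $\nN_T$ intersected with that wall is a rational polytope; it does not bound the \emph{number} of distinct active walls passing through $p$, which a priori could be infinite (infinitely many rational hyperplanes can pass through a single rational point, and nothing in Lemma \ref{lem:lengthbdd}(1) caps how many normal directions can arise). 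Local finiteness of supporting walls at $p$ is a \emph{consequence} of the polyhedral structure you are trying to prove, so invoking it here without a separate argument is circular.

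The paper avoids wall-counting entirely. After the compactness reduction to finitely many families $T_i$, one may assume there is a single \emph{rational} point $\Delta+N\in\Ll(B,M)\cap[0,k]^d$ on which every $R_t$, $t\in T_i$, is numerically trivial. The induction is then on $\dim\bigl(\Ll(B,M)\cap[0,k]^d\bigr)$: each proper face $\Ll^j$ gives a lower-dimensional instance, so $\nN_{T_i}\cap\Ll^j$ is a rational polytope by induction. The decisive observation is that, because $(K_X+\Delta+N)\cdot R_t=0$ for all $t$, nonnegativity along a segment emanating from $\Delta+N$ propagates to its far endpoint on a face: if $\Delta'+N'\in\nN_{T_i}$ lies strictly between $\Delta+N$ and $\Delta''+N''\in\Ll^j$, then $(K_X+\Delta''+N'')\cdot R_t\ge0$. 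Hence $\nN_{T_i}\cap[0,k]^d$ is precisely the convex hull of $\Delta+N$ and the rational polytopes $\nN_{T_i}\cap\Ll^j$, which is a rational polytope. To repair your proof, replace the finiteness-of-walls claim with this convex-hull argument; the ``intersection of finitely many rational half-spaces'' is then a corollary of the polyhedral structure, not the mechanism by which you obtain it.
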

\begin{proof}
 By definition, $\nN_{T} \cap [0, k]^d$ is just 
 \[
\{\Delta+N\in \Ll(B,M) \cap [0, k]^d \mid (K_X+\Delta+N)\cdot R_t\ge0~\forall~ t\in T\},
 \] and $\Ll(B,M) \cap [0, k]^d$ is a bounded rational polytope. 
	
	Since $\nN_{T} \cap [0, k]^d$ is compact, by Lemma \ref{lem:lengthbdd}(2), there are 
	\[
	(\Delta_1+N_1),\ldots,(\Delta_n+N_n)\in \nN_{T}\cap [0, k]^d,
	\] and $\delta_1,\ldots,\delta_n>0$, such that $\nN_{T} \cap [0, k]^d$ is covered by 
	\[
	\Bb_i=\{\Delta+N \in \nN_{T} \cap [0, k]^d \mid ||\Delta+N-(\Delta_i+N_i)||<\delta_i\}, 1 \leq i \leq n.
	\] Moreover, if $\Delta+N\in\Bb_i$ with $(K_X+\Delta+N)\cdot R_t<0$ for some $t\in T$, then  $(K_X+\Delta_i+N_i)\cdot R_t=0$. Let
	\[
	T_i=\{t\in T \mid (K_X+\Delta+N)\cdot R_t<0 \text{~for some~}\Delta+N\in \Bb_i\}.
	\]
	Then, $(K_X+\Delta_i+N_i)\cdot R_t=0$ for each $t\in T_i$. Moreover, we have 
	\[
	\nN_T \cap [0, k]^d=\bigcap_{i=1}^n (\nN_{T_i}\cap [0, k]^d).
	\] It suffices to show that $\nN_{T_i}\cap [0, k]^d$ is a rational polytope. 
	
	By replacing $T$ with $T_i$ and $\Delta+N$ with $\Delta_i+N_i$, we may assume that there exists $\Delta+N$ such that $(K_X+\Delta+N)\cdot R_t=0$ for any $t\in T$. Because 
	\[
	\{\Delta+N \in \Ll(B,M) \mid (K_X+\Delta+N) \cdot R_t = 0 ~\forall~t\in T\}
	\] is a rational polytope, we can further assume that $\Delta+N$ is a rational point. 
	
	We do induction on dimensions of polytopes. If $\dim (\Ll(B, M)\cap [0, k]^d)=1$, then the statement is straightforward to verify. If $\dim(\Ll(B, M)\cap [0, k]^d)>1$, let $\Ll^1,\ldots,\Ll^p$ be the proper faces of $\Ll(B, M)\cap [0, k]^d$. By induction on dimensions, $\nN_{T}^i \coloneqq \nN_{T}\cap \Ll^i$ is a rational polytope. If $\Delta' + N' \in \nN_T\cap [0, k]^d$, then there exists a line connecting with  $\Delta' + N'$ and $\Delta+ N$ which intersects some $\Ll^i$ on $\Delta'' + N''$. Moreover, we can assume that $\Delta' + N'$ lies inside the line segment between $\Delta+N$ and $\Delta'' + N''$.  Because $(K_X+\Delta+N)\cdot R_t=0$, we have $(K_{X''}+\Delta''+N'')\cdot R_t \geq 0$ for any $t\in T$. Thus $\Delta''+N'' \in \nN_{T}^i$. This shows that $\nN_T\cap [0, k]^d$ is the convex hull of $\Delta+N$ and all $\nN_T^i$, which is also a rational polytope.
\end{proof}

Let $D$ be a divisor on $X$, we say that a divisorial/flipping contraction $f: X \to Y$ is \emph{$D$-trivial}, if for any contraction curve $C$, we have $D \cdot C =0$.

\begin{lemma}\label{prop: K trivial}
	Let $(X/Z, (B+A)+M)$ be a $\Qq$-factorial NQC g-lc pair with boundary part $B+A$  and nef part $M$. Suppose that $X$ is klt, $(X/Z,B+M)$ is g-lc and $K_X+B+M$ is nef. Then there exists $\delta_0>0$, such that for any $\delta \in (0, \delta_0)$, any sequence of g-MMP$/Z$ on $(K_X+B+\delta A+M)$ is $(K_X+B+M)$-trivial.
\end{lemma}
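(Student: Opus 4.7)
The plan is to combine the nef--Cartier decomposition of $K_X+B+M$ from Proposition \ref{le: decomposition to nef Cartier divisors} with the length-of-extremal-rays bound (Proposition \ref{prop: length of extremal rays for g-pair}) and then run an induction on the MMP steps; the key point is that, once a step is $(K_X+B+M)$-trivial, the decomposition descends to the new model with the same constants, so the threshold $\alpha$ below is uniform along the MMP.

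Since $K_X+B+M$ is nef, Proposition \ref{le: decomposition to nef Cartier divisors} gives
\[
K_X+B+M=\sum_{j=1}^r a_j(K_X+D_j+N_j),
\]
with $a_j>0$, $\sum_j a_j=1$, each $K_X+D_j+N_j$ nef and coming from an NQC g-lc pair $(X,D_j+N_j)$, together with an integer $m\in\Nn$ such that each $m(K_X+D_j+N_j)$ is Cartier. For any integral curve $C$ one has $(K_X+D_j+N_j)\cdot C\in \frac{1}{m}\Zz_{\ge 0}$, so whenever $(K_X+B+M)\cdot C>0$ it is at least $\alpha\coloneqq (\min_j a_j)/m$. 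Set $\delta_0\coloneqq \alpha/(\alpha+2\dim X)$, fix $\delta\in(0,\delta_0)$, and proceed by induction on the MMP steps with the hypothesis that on $X_i$: the variety is $\Qq$-factorial klt, the pairs $(X_i,(B_i+A_i)+M_i)$ and $(X_i,B_i+M_i)$ are NQC g-lc, $K_{X_i}+B_i+M_i$ is nef, and the above decomposition persists with the same $a_j$ and $m$.

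For the inductive step, let $R$ be a $(K_{X_i}+B_i+\delta A_i+M_i)$-negative extremal ray and $C$ an extremal curve with $[C]\in R$; write $b=(K_{X_i}+B_i+M_i)\cdot C\ge 0$ and $a=A_i\cdot C$. Suppose $b>0$; then $b\ge\alpha$, while $b+\delta a<0$ forces $a<0$ and hence $b+a<b+\delta a<0$, so $R$ is also $(K_{X_i}+B_i+A_i+M_i)$-negative. Applying Proposition \ref{prop: length of extremal rays for g-pair} to the NQC g-lc pair $(X_i,(B_i+A_i)+M_i)$ yields $a\ge -2\dim X-b$, which combined with $\delta a<-b$ gives $b(1-\delta)<2\delta\dim X$, so $b<2\delta\dim X/(1-\delta)<\alpha$, a contradiction. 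Thus $b=0$. To propagate the hypothesis to $X_{i+1}$, note that $0=(K_{X_i}+B_i+M_i)\cdot R=\sum_j a_j(K_{X_i}+D_{i,j}+N_{i,j})\cdot R$ with each summand nef and $a_j>0$ forces $(K_{X_i}+D_{i,j}+N_{i,j})\cdot R=0$ for every $j$; therefore each $m(K_{X_i}+D_{i,j}+N_{i,j})$ descends through the divisorial contraction (resp.\ flipping contraction) to a nef Cartier divisor on the base and pulls back to a nef Cartier divisor $m(K_{X_{i+1}}+D_{i+1,j}+N_{i+1,j})$, preserving the decomposition and the value of $\alpha$. Moreover, since $(K_{X_i}+B_i+A_i+M_i)\cdot R<0$, the step is a legitimate MMP step for $(X_i,(B_i+A_i)+M_i)$ and preserves its NQC g-lc property; the triviality of $K_X+B+M$ on $R$ together with the negativity lemma preserves the NQC g-lcness of $(X_i,B_i+M_i)$, and standard MMP arguments preserve $\Qq$-factoriality and klt-ness of $X_{i+1}$.

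The main obstacle is securing a single $\alpha$ (hence a single $\delta_0$) that works throughout the MMP: invoking Lemma \ref{lem:lengthbdd} separately on each $X_i$ would only produce an $X_i$-dependent bound, which is why one must go back to the nef--Cartier decomposition of Proposition \ref{le: decomposition to nef Cartier divisors} and exploit its compatibility with $(K_X+B+M)$-trivial contractions.
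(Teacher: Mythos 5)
Your proposal is correct and follows essentially the same route as the paper's proof: decompose $K_X+B+M$ via Proposition \ref{le: decomposition to nef Cartier divisors}, set $\alpha=\min_j a_j/m$ and $\delta_0=\alpha/(\alpha+2\dim X)$, rule out $(K_X+B+M)$-positive extremal curves using Proposition \ref{prop: length of extremal rays for g-pair} (your inequality $b(1-\delta)<2\delta\dim X$ is the same estimate the paper writes as $-2\delta\dim X+(1-\delta)\alpha>0$), and then propagate the decomposition through each $(K_X+B+M)$-trivial step because each $m(K_X+D_j+N_j)$ descends to a nef Cartier divisor. The paper states the propagation more tersely ("we can repeat the above argument on $Y$") while you spell out the induction, but the content is identical.
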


\begin{proof}
	By Proposition \ref{le: decomposition to nef Cartier divisors}, there exist NQC g-lc pairs $(K_X+B'_k +M_k')$, such that 
	$K_X+B+M = \sum a_k (K_X+B'_k +M_k')$ with $\sum a_k =1, a_k >0$. Moreover, $K_X+B'_k +M_k'$ is a nef$/Z$ $\Qq$-Cartier divisor for each $k$. Thus there exists $m\in\Nn$, such that $m(K_X+B'_k +M_k')$ is Cartier. 
	
	Let 
	\[
	\alpha\coloneqq \min_k\{\frac{a_k}{m}\} \text{~and~} \delta_0=\frac{\alpha}{2\dim X+\alpha}.
	\] Choose $\delta \in (0,\delta_0)$, then for any extremal curve $C$ of $(K_X+B+M+\delta A)$, if $(K_X+B+M) \cdot C>0$, we have  $(K_X+B+M) \cdot C \geq \alpha$. By the length of extremal rays (Proposition \ref{prop: length of extremal rays for g-pair}),
	\begin{align*}
	&(K_X+B+M+\delta A)\cdot C\\
	=&\delta(K_X+B+M+A)\cdot C+(1-\delta)(K_X+B+M)\cdot C\\
	\ge& -2\delta\dim X+(1-\delta)\alpha>0.
	\end{align*} This is a contradiction. 
	Thus, any $(K_X+B+M+\delta A)$-flip or divisorial contraction, $f: X \dashrightarrow Y/Z$, is $(K_X+B+M)$-trivial. As $K_X+B'_k +M_k'$ is nef, $f$ is also $(K_X+B'_k +M_k')$-trivial, and thus $m(K_Y+B'_{Y,k}+M'_{Y,k})\coloneqq mf_{*}(K_X+B'_k +M_k')$ is nef and Cartier. We can repeat the above argument on $Y$. This proves the claim. 
\end{proof}

The dual of the above result is the following lemma. 

\begin{lemma}\label{prop: P trivial}
	Let $(X/Z, B+M)$ be a g-lc pair with $X$ a $\Qq$-factorial klt variety. Suppose that $P$ is an NQC divisor$/Z$. Then for any $\beta \gg 1$, any sequence of g-MMP$/Z$ on $(K_X+B+M+\beta P)$ is $P$-trivial.
\end{lemma}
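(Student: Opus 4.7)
The plan is to mimic the proof of Lemma~\ref{prop: K trivial}, with the roles of the small and large parameters interchanged: the NQC decomposition of $P$ (rather than a Cartier decomposition of $K_X+B+M$) will supply a uniform positive lower bound on $P\cdot C$ whenever $P\cdot C\neq 0$. Since $P$ is NQC$/Z$, write $P\equiv \sum_{i} r_i P_i/Z$ with $r_i\in \Rr_{>0}$ and each $P_i$ a nef $\Qq$-Cartier divisor$/Z$; pick $m\in\Nn$ so that every $mP_i$ is an integral nef Cartier divisor, set $\alpha\coloneqq \min_i\{r_i/m\}$ and $\beta_0\coloneqq 2\dim X/\alpha$. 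I claim any $\beta\geq \beta_0$ works.

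Since $X$ is $\Qq$-factorial klt and $M+\beta P$ is nef, Lemma~\ref{lem:glcklt} lets us run a g-MMP$/Z$ on $K_X+B+M+\beta P$. Let $R$ be any $(K_X+B+M+\beta P)$-negative extremal ray$/Z$ appearing in the MMP and $C$ an extremal curve in $R$. As $P$ is nef, $(K_X+B+M)\cdot C\leq (K_X+B+M+\beta P)\cdot C<0$, so $R$ is also $(K_X+B+M)$-negative, and Proposition~\ref{prop: length of extremal rays for g-pair} applied to $(X/Z,B+M)$ gives $(K_X+B+M)\cdot C\geq -2\dim X$. On the other hand, if $P\cdot C>0$ then nefness of each $P_i$ together with $r_i>0$ forces some $P_i\cdot C>0$; but $mP_i$ is an integral nef Cartier divisor, so $mP_i\cdot C\in\Zz_{\geq 1}$, whence $P\cdot C\geq r_i/m\geq \alpha$. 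For $\beta\geq \beta_0$ this would yield
\[
(K_X+B+M+\beta P)\cdot C \;\geq\; -2\dim X+\beta\alpha \;\geq\; 0,
\]
contradicting the negativity of $R$. Hence $P\cdot C=0$, so the corresponding divisorial contraction or flip is $P$-trivial.

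To iterate, one checks that a $P$-trivial step $f:X\dashrightarrow X^+/Z$ preserves the NQC structure of $P$: from $0=P\cdot C=\sum_i r_i(P_i\cdot C)$ with $r_i>0$ and each $P_i\cdot C\geq 0$ one deduces $P_i\cdot C=0$ for every $i$, so by the cone/contraction theorem each Cartier nef $mP_i$ descends to a Cartier nef $mP_i^+$ on $X^+$, and $P^+\coloneqq f_*P\equiv \sum_i r_i P_i^+/Z$ is still NQC with the same $r_i$ and $m$. Thus $\alpha$, and hence $\beta_0$, can be chosen to be the same on $X^+$, and the core estimate applies uniformly along the whole g-MMP. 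The main (and essentially only) point requiring care is this uniform propagation of the NQC data, so that a single $\beta_0$ governs every step; once that is in hand, the rest of the argument is immediate.
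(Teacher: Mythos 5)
Your proof is correct and takes essentially the same approach as the paper: decompose $P$ into nef $\Qq$-Cartier pieces, use Cartier integrality to get a uniform lower bound $\alpha$ on nonzero values of $P\cdot C$, and combine with the length-of-extremal-rays bound to force $P$-triviality; the paper asserts the existence of such an $\alpha$ and the persistence under contractions without spelling it out, whereas you give the explicit derivation. No gaps.
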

\begin{proof}
	Since $P$ is NQC, there exists $\alpha>0$, such that for any curve $C/Z$, if $P\cdot C\neq 0$, then $P\cdot C>\alpha$. Set $d=\dim X$ and choose $\beta>\frac{2d}{\alpha}$. Suppose that $C$ is an extremal curve such that $(K_X+B+M+\beta P)\cdot C<0$. If $P\cdot C\neq0$, then by the length of extremal rays (Proposition \ref{prop: length of extremal rays for g-pair}), we have
	\begin{align*}
	(K_X+B+M+\beta P)\cdot C&=(K_X+B+M)\cdot C+\beta P\cdot C\\
	&\ge -2d+\beta\cdot\alpha>0.
	\end{align*}
	This is a contradiction, and thus $P\cdot C=0$.  Just as Lemma \ref{prop: K trivial}, by the $P$-triviality, we can continue this process and $\alpha$ is independent of this g-MMP. 
\end{proof}

\subsection{G-MMP with scaling of an NQC divisor}  \label{subsection: scalingnqcdiv}
In this subsection, we will define a g-MMP with scaling of a divisor $Q=E+P$, where $E$ is an effective divisor and $P$ is the pushforward of an NQC divisor. Notice that $P$ may not be an effective divisor. To emphasize this special property, we coin the name ``g-MMP with scaling of an NQC divisor'', although there also exists an effective part (i.e. $E$) in $Q$.

\begin{lemma}\label{lem:MMPscalingNQC}
	Let $X$ be a $\Qq$-factorial klt variety, and $(X/Z,B+M)$ be an NQC g-lc pair with boundary part $B$ and nef part $M$. Let $E$ be an effective divisor on $X$ and $P$ be a pushforward of an NQC divisor from a sufficiently high model. Set $Q=E+P$. Suppose that $(X/Z,(B+E)+(M+P))$ is NQC g-lc with boundary part $B+E$ and nef part $M+P$, and $K_X+(B+E)+(M+P)$ is nef$/Z$. Then, either $K_X+B+M$ is nef$/Z$, or there is an extremal ray $R/Z$ such that $(K_X+B+M)\cdot R<0$, $(K_X+B+M+\nu Q)\cdot R=0$, where
	\[
	\nu:=\inf\{t\ge0| K_X+B+M+tQ \text{~ is nef}/Z\}.
	\] In particular, $K_X+B+M+\nu Q$ is nef$/Z$, 
\end{lemma}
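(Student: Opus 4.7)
The plan is to apply the Shokurov polytope machinery (Proposition \ref{le: decomposition to nef Cartier divisors}) to the one-parameter family of NQC g-lc pairs interpolating between $(X,B+M)$ and $(X,(B+E)+(M+P))$. I set $L_t := K_X+B+M+tQ$ and $D(t) := (B+tE)+(M+tP)$, so that $L_t$ is the adjoint divisor of the g-pair $(X,D(t))$.

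First, I would check that for every $t\in[0,1]$, $(X/Z,D(t))$ is an NQC g-lc pair with boundary part $B+tE$ and nef part $M+tP$. The NQC condition is automatic, since $M+tP$ is a nonnegative $\Rr$-combination of the nef $\Qq$-Cartier divisors appearing in the NQC decompositions of $M$ and $P$. The g-lc condition holds at $t=0,1$ by hypothesis, and because $\Ll(B+E,M+P)$ is a rational polytope in the ambient vector space $V$ of Section \ref{subsection: Shokurov polytope}, convexity extends the g-lc condition to all $t\in[0,1]$.

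Second, I would invoke Proposition \ref{le: decomposition to nef Cartier divisors} with $T$ the collection of all extremal rays$/Z$ of $\overline{\rm NE}(X/Z)$: for any sufficiently large $k\in\Qq$, the set $\nN_T\cap[0,k]^d\subset V$ is a rational polytope, and it contains $D(1)$ by hypothesis. Intersecting this polytope with the affine line $\{D(t):t\in\Rr_{\ge0}\}$ produces a closed (rational) polytopal subset of $\Rr_{\ge0}$ containing $t=1$, so the infimum $\nu$ is attained. In particular, $L_\nu=K_X+B+M+\nu Q$ is nef$/Z$. If $\nu=0$, then $K_X+B+M$ is itself nef$/Z$ and we are done; otherwise $\nu\in(0,1]$.

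Third, I would extract the required extremal ray from the polytope structure. The rational polytope $\nN_T\cap[0,k]^d$ is cut out by finitely many linear inequalities of the form $(K_X+\Delta+N)\cdot R_s\ge0$ with $R_s\in T$. Since the line $D(t)$ lies outside $\nN_T$ for $t<\nu$ and inside for $t\ge\nu$, at least one such inequality must transition from strictly negative (for $t$ just below $\nu$) to zero at $t=\nu$: there is an extremal ray $R/Z$ with $L_\nu\cdot R=0$ and $L_{t}\cdot R<0$ for $t$ slightly below $\nu$. The function $t\mapsto L_t\cdot R=(K_X+B+M)\cdot R+t\,(Q\cdot R)$ is affine in $t$, so the transition from negative to zero at $t=\nu$ forces $Q\cdot R>0$, and consequently $(K_X+B+M)\cdot R=L_\nu\cdot R-\nu(Q\cdot R)=-\nu(Q\cdot R)<0$, which is precisely the ray required by the lemma.

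The principal obstacle is attaining the infimum $\nu$ and producing a single extremal ray realizing it; both steps depend essentially on the rational polytope structure of $\nN_T\cap[0,k]^d$ from Proposition \ref{le: decomposition to nef Cartier divisors}, which in turn relies on the NQC hypothesis (cf.\ Example \ref{rem:reasonNQC}). Once these two ingredients are in hand, the transition from strictly negative to zero of a single linear function pins down the desired $R$.
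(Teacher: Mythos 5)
Your approach diverges genuinely from the paper's. The paper shows directly that the supremum of the nef-threshold ratios $\nu_i = -(K_X+B+M)\cdot\Gamma_i \,/\, Q\cdot\Gamma_i$, taken over $(K_X+B+M)$-negative extremal curves $\Gamma_i$, is attained: it expresses both $(K_X+B+M)\cdot\Gamma_i$ and $(K_X+(B+E)+(M+P))\cdot\Gamma_i$ as rational combinations with bounded integer coefficients (via Propositions \ref{prop: length of extremal rays for g-pair} and \ref{le: decomposition to nef Cartier divisors}) and runs a DCC argument on $1/\nu_i$. You instead interpolate in the Shokurov polytope and use the closedness of $\nN_T\cap[0,k]^d$.

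Your first two steps are fine and do establish that the infimum is attained, so $K_X+B+M+\nu Q$ is nef$/Z$. The gap is in Step~3. The claim that $\nN_T\cap[0,k]^d$ is cut out by finitely many inequalities \emph{of the form} $(K_X+\Delta+N)\cdot R_s\ge0$ with $R_s\in T$ is not what Proposition \ref{le: decomposition to nef Cartier divisors} asserts, and is not a formal consequence of being a rational polytope: an intersection of infinitely many half-spaces can be a polytope without any finite subfamily of those particular half-spaces cutting it out (e.g.\ $\bigcap_{n\ge1}\{x\ge-1/n\}=\{x\ge0\}$). A priori the facet of $\nN_T$ containing $D(\nu)$ could be a limit of the hyperplanes $\{(K_X+\Delta+N)\cdot R_t=0\}$ without any of them actually vanishing at $D(\nu)$, in which case no single extremal ray $R$ would satisfy $L_\nu\cdot R=0$, and your affine-transition argument would have no ray to apply to.

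The gap can be closed by Lemma \ref{lem:lengthbdd}(2) applied to the NQC g-lc pair $(X,(B+\nu E)+(M+\nu P))$, which is g-lc by the convexity you already noted: there is $\delta>0$ such that if $\Delta+N\in\Ll$ lies within $\delta$ of $D(\nu)$ and some extremal ray $R$ has $(K_X+\Delta+N)\cdot R\le 0$, then $(K_X+D(\nu))\cdot R\le0$. Since $K_X+D(t)$ is not nef for $t<\nu$, choose $t$ with $\nu-t$ small and an extremal ray $R$ with $(K_X+D(t))\cdot R<0$; the lemma gives $(K_X+D(\nu))\cdot R\le0$, and nefness of $K_X+D(\nu)$ upgrades this to equality, producing the desired ray; the affine-in-$t$ argument then gives $(K_X+B+M)\cdot R<0$ and $Q\cdot R>0$ as you say. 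So the overall strategy is salvageable, but it relies essentially on Lemma \ref{lem:lengthbdd}(2), which your proposal does not invoke.
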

\begin{proof}
	Suppose that $K_X+B+M$ is not nef$/Z$. Let $\{R_i\}_{i\in\Ii}$ be the set of $(K_X+B+M)$-negative extremal rays$/Z$, and $\Gamma_i$ be an extremal curve of $R_i$. 
	As $\Ll(B, M)$ is a rational polytope, by Proposition \ref{prop: length of extremal rays for g-pair}, there are $r_1,\ldots,r_s \in \Rr_{>0}$ and $m \in \Nn$, such that
	\[-2\dim X\le (K_X+B+M)\cdot\Gamma_i=\sum_{j=1}^s\frac{r_j n_{i,j}}{m}<0,\] where $-2m(\dim X)\le n_{i,j}\in\Zz$. By Proposition \ref{le: decomposition to nef Cartier divisors}, there are $r'_1,\ldots,r'_t \in \Rr_{>0}$ and $m \in \Nn$ (after changing the above $m$ by a sufficiently divisible multiple), such that
	\[(K_X+B+E+M+P)\cdot\Gamma_i=\sum_{k=1}^t \frac{r'_kn'_{i,k}}{m},\] where $n'_{i,k}\in\Zz_{\geq 0}$. 
	
	Since $n_{i,j}$ is bounded above, $\{n_{i,j}\}$ is a finite set, and so is $\{\sum_{j}r_j n_{i,j}\}$. Moreover, $\sum_{k} r'_kn'_{i,k}$ belongs to a DCC set, where DCC stands for descending chain condition.
	Let 
	\[\nu_i\coloneqq\frac{-(K_X+B+M)\cdot \Gamma_i}{Q\cdot \Gamma_i}.\]
	Thus, 
	\[\frac{1}{\nu_i}=\frac{\sum_{k} r'_kn'_{i,k}}{-\sum_{j}r_j n_{i,j}}+1\]
	belongs to a DCC set. Hence there exists a maximal element $\nu=\nu_s$ in the set $\{\nu_i\}_{i\in\Ii}$. Then,
	\[(K_X+B+M+\nu Q)\cdot \Gamma_i\ge0\]
	for any $i\in\Ii$. For the extremal curve $\Gamma_s$,  $(K_X+B+M+\nu Q)\cdot \Gamma_s=0$.	 
\end{proof}
\begin{definition}[G-MMP with scaling of an NQC divisor]\label{defn:MMPsP} Under the same assumptions and notation of Lemma \ref{lem:MMPscalingNQC}, we define the g-MMP with scaling of an NQC divisor as follows.

(1) If $K_X+B+M$ is nef$/Z$, we stop.

(2) If $K_X+B+M$ is not nef$/Z$, there exists an extremal ray $R$ as in Lemma \ref{lem:MMPscalingNQC}. By Lemma \ref{lem:glcklt}, we can contract $R$. 
\begin{itemize}
\item If the contraction is a Mori fiber space, we stop. 
\item If the contraction is a divisorial (resp. flipping) contraction, let  $X\dashrightarrow X_1$ be the corresponding contraction (resp. flip). Let $(X_1,B_1+M_1+\nu Q_1)$ be the birational transform of $(X, B+M+\nu Q)$. We can continue the previous process on $(X_1,B_1+M_1+\nu Q_1)$ in place of $(X, B+M+Q)$. In fact, by $(X, B+M+\nu Q)$-triviality,  $(X_1,B_1+M_1+\nu Q_1)$ is nef$/Z$.\end{itemize}

By doing this, we obtain a sequence (may be infinite) of varieties $X_i$ and corresponding nef thresholds $\nu_i\coloneqq \inf\{t\ge0| K_{X_i}+B_i+M_i+tQ_i \text{~ is nef}/Z\}.$
\end{definition}

\begin{remark}\label{rk: decreasing sequence}
By definition, the nef thresholds $\nu_i \geq \nu_{i+1}$ for each $i$.
\end{remark}

\subsection{Lifting the sequence of flips}
\label{subsection: Lifting a sequence of flips with quasi-scaling}
We use the same notation as Section \ref{subsection: scalingnqcdiv}. Suppose that a sequence of g-MMP$/Z$ with scaling of $Q=E+P$ on $K_{X}+B+M$ only consists of flips, $X_i\dashrightarrow X_{i+1}/Z_i$, where $X_i \to Z_i$ is the flipping contraction.
Let $h_0:(X'_0/Z,B'_0+M'_0)\to X_0$ be a g-dlt modification of $(X_0,B_0+M_0)$ (see Proposition \ref{prop: dlt}). Pick an ample$/Z_0$ divisor $H\ge0$, such that 
$$K_{X_0}+B_0+M_0+H\sim_{\Rr}0/Z_0,$$
and $(X_0,B_0+M_0+H)$ is g-lc. 
By Lemma \ref{lem:glcklt}, $(X_0,\Delta_0)$ is klt for some boundary $\Delta_0\sim_{\Rr} B_0+M_{0}+\epsilon H/Z_0$. According to the proof of Lemma \ref{lem:glcklt}, we may choose $\Delta_0\ge0$, such that $h_0^{*}(K_{X_0}+\Delta_0)=K_{X'_0}+\Delta'_0$
for some effective divisor $\Delta'_0$, and $({X'_0},\Delta'_0)$ is klt. Now run an MMP$/Z_0$ on $K_{X'_0}+\Delta'_0$ with scaling of $h_0^{*}(H)$. By \cite[Corollary 1.4.2]{BCHM10}, the MMP terminates with a log terminal model, $X'_0\dashrightarrow X'_1$. By construction, we have
\begin{align*}
(1-\epsilon)(K_{X'_0}+B'_{0}+M'_{0})&\sim_{\Rr}
(1-\epsilon)h_0^{*}(K_{X_0}+B_{0}+M_{0})\\
&\sim_{\Rr}h_0^{*}(K_{X_0}+B_{0}+M_{0}+\epsilon H)/Z_0\\
&\sim_{\Rr}K_{X'_0}+\Delta'_0/Z_0.
\end{align*}  
Thus this MMP is also a g-MMP$/Z_0$ on $K_{X'_0}+B'_{0}+M'_{0}$, and thus $K_{X'_1}+B'_{1}+M'_{1}$ is nef$/Z_0$. We define $Q_0'=h_0^{*}(Q_0)$ as follows. Suppose that 
\[
W \xrightarrow{p} X_0' \xrightarrow{h_0} X_0
\] is a sufficiently high log resolution such that $P_0=(h_0 \circ p)_* P_W$ for an NQC divisor $P_W$ on $W$. Then by the negativity lemma, $P_W+F=(h_0 \circ p)^*P_0$ with $F \geq 0$. Set 
\begin{equation}\label{eq: E', P'}
E_0'\coloneqq h_0^*E_0+p_*F \text{~and~} P_0' \coloneqq p_*P_W,
\end{equation} and 
\begin{equation}\label{eq: Q'}
Q_0'\coloneqq E_0'+P_0'=h_0^*(E_0)+p_*(p^*\circ h_0^*(P_0))=h_0^*(E_0+P_0)=h_0^*(Q_0).
\end{equation}

 Because $\rho(X_0/Z_0)=1$, $Q_0\equiv aH/Z_0$ for some $a>0$. Thus the g-MMP/$Z_0$ $X'_i \dashrightarrow X'_{i+1}$ is also a g-MMP$/Z_0$ on $K_{X'_0}+B'_{0}+M'_{0}$ with scaling of $Q_0'$.

Because $X_0, X_1$ are isomorphic in codimension $1$ and $K_{X_1}+B_{1}+M_{1}$ is ample$/Z_0$, $({X_1}, B_{1}+M_{1})$  is a g-log canonical model of $({X_0}, B_0+M_0)$ over $Z_0$ (here g-log canonical model means a g-log terminal model with $K_{X_1}+B_{1}+M_{1}$ ample$/Z_0$). Thus there exists a morphism $h_1: X'_1\to X_1$ such that  $K_{X'_1}+B'_{1}+M'_{1}=h_1^*(K_{X_1}+B_{1}+M_{1})$, which is also a g-dlt modification of $({X_1}, B_{1}+M_{1})$. We can continue the above process for $X_1, X'_1$, etc. in places of $X_0, X_0'$, etc. 

From the above, we have a sequence of g-MMP$/Z$ on $(K_{X'_0}+B'_{0}+M'_{0})$ with scaling of $Q_0'$. The reason is as follows. A priori, the g-MMP $X'_i \dashrightarrow X'_{i+1}$ with scaling of $Q_0'$ is over $Z_0$ \emph{rather than over $Z$}. We denote this g-MMP$/Z_0$ by 
\[
X_i'=Y_0 \dashrightarrow Y_1 \dashrightarrow\cdots \dashrightarrow Y_{k}=X_{i+1}',
\] and let $\nu_j', 0 \leq j \leq k$ be the corresponding nef thresholds$/Z_0$. By $K_{X_i'}+B_i'+M_i'+\nu_iQ_i' \equiv 0/Z_0$, we have $\nu_j'=\nu_i$ for $0 \leq j \leq k-1$. Thus
\[
\nu_j' = \inf\{t \mid K_{Y_j}+B_{Y_j}+M_{Y_j}+tQ_{Y_{j}} \text{~is nef over~} Z\},
\] for $0 \leq j \leq k-1$. This shows that the g-MMP with scaling of $Q_i'$ is also over $Z$.

By doing above, we lift the original g-MMP with scaling to a new g-MMP with scaling. The advantage is that  each $(X'_i, B_i'+M'_i)$ becomes $\Qq$-factorial and g-dlt.

\section{Special termination for g-MMP with scaling}\label{sec: special termination}

It is crucial to observe that some termination results still hold for g-MMP with scaling of an NQC divisor. The following is a variation of \cite[Theorem 1.9]{Birkar12}.
\begin{theorem}\label{thm: gmmtermination}
Under the assumptions and notation of Definition \ref{defn:MMPsP}. Suppose that there is a g-MMP with scaling of $Q$. Let $\mu = \lim_{j \to \infty} \nu_j$. If $\mu\neq \nu_j$ for any $j$, and $(X/Z,(B+\mu E)+(M+\mu P))$ has a g-log minimal model, then the g-MMP terminates. 
\end{theorem}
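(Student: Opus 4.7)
The plan is to combine a shift of the boundary with a comparison against the given g-log minimal model, and then invoke special termination (Theorem \ref{prop: special termination 2}) to finish.

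First I would observe that the given g-MMP is simultaneously a g-MMP on the shifted pair $(X/Z,(B+\mu E)+(M+\mu P))$. Indeed, if $R_i$ is the extremal ray contracted at step $i$, Lemma \ref{lem:MMPscalingNQC} yields $(K_{X_i}+B_i+M_i+\nu_i Q_i)\cdot R_i=0$ and $(K_{X_i}+B_i+M_i)\cdot R_i<0$, whence $Q_i\cdot R_i>0$. Since $\nu_i>\mu$ by hypothesis, a direct computation gives
\[
(K_{X_i}+B_i+\mu E_i+M_i+\mu P_i)\cdot R_i = -(\nu_i-\mu)(Q_i\cdot R_i)<0.
\]
Thus each step is a negative step for $K_X+(B+\mu E)+(M+\mu P)$, with corresponding nef thresholds $\nu_i-\mu\to 0^+$; the shifted pair is g-lc because $0\le\mu\le\nu_1\le 1$ (the last inequality uses that $K_X+B+M+Q$ is nef$/Z$ by assumption).

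Next I would reduce to the case where every step is a flip and the shifted pair is $\Qq$-factorial g-dlt. Divisorial contractions drop the Picard number, so they can occur only finitely often, and after discarding a finite initial segment we are left with flips only. Applying the g-dlt modification (Proposition \ref{prop: dlt}) together with the lifting procedure of Section \ref{subsection: Lifting a sequence of flips with quasi-scaling}, we may replace each $(X_i,B_i+\mu E_i+M_i+\mu P_i)$ with a $\Qq$-factorial g-dlt model, and Lemma \ref{lem:dltpreserved} guarantees that g-dltness is preserved through subsequent flips.

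The core step is the comparison with the given g-log minimal model $(Y/Z,(B+\mu E)_Y+(M+\mu P)_Y)$. Fix a sufficiently high common model $\tilde X$ dominating $Y$ and every $X_i$. Applying the negativity lemma at each flip, the sequence
\[
a\bigl(D,X_i,B_i+\mu E_i+M_i+\mu P_i\bigr)
\]
is non-decreasing in $i$ for every prime divisor $D$ on $\tilde X$, and by clause~(5) of Definition \ref{def: g-log minimal model and g-log terminal model} together with the negativity lemma it is bounded above by $a(D,Y,(B+\mu E)_Y+(M+\mu P)_Y)$. Only finitely many divisors $D$ can ever have a strict jump between consecutive $X_i$'s, so after discarding finitely many steps every flipping locus must be contained in the common g-lc places of the two pairs, and hence inside the support of $\lf B+\mu E\rf$. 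Then Theorem \ref{prop: special termination 2} guarantees that no infinite sequence of flips can be confined to this locus, contradicting the assumed infinitude of the g-MMP.

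The hard part will be making the discrepancy-saturation argument precise in the generalized polarized setting: one must compare discrepancies across flips while carefully tracking the pullback of the nef part on successive models, control the failure of $Y$ to extract every divisor on $\tilde X$, and confirm that the ``stabilized'' flipping loci are actually trapped inside $\lf B+\mu E\rf$ so that the special termination theorem applies cleanly.
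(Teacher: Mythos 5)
Your proposal has two serious problems, one logical and one structural.

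\textbf{The discrepancy-saturation step does not work.} You claim that because the sequence $a(D,X_i,B_i+\mu E_i+M_i+\mu P_i)$ is non-decreasing and bounded above for every $D$, ``only finitely many divisors $D$ can ever have a strict jump between consecutive $X_i$'s.'' This is not a consequence of monotonicity plus boundedness: each flip strictly increases the discrepancy of every divisor lying over its flipping locus, and these can be \emph{new} divisors at each step, so there is no a priori bound on how many divisors jump. Even setting that aside, the conclusion you want --- that the flipping locus lands inside $\Supp\lf B+\mu E\rf$ --- does not follow from discrepancy considerations alone. A flipping curve having negative intersection with $K+B+M$ tells you nothing about which components of $B$ it meets unless you have an additional structural input such as a decomposition $K_X+B+M\equiv P+N$ with $\Supp N\subseteq\lf B\rf$ and a $P$-trivial MMP (this is exactly the mechanism used in Step~2 of the proof of Theorem~\ref{thm: weak zariski implies mm}). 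The mere existence of a g-log minimal model $Y$ does not supply that input. If anything, the opposite happens: after finitely many steps no g-lc center is contracted, which pushes the flipping locus \emph{away} from the g-lc places rather than into them.

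\textbf{Invoking Theorem~\ref{prop: special termination 2} is circular and its hypotheses are not available.} The proof of Theorem~\ref{prop: special termination 2} itself calls Theorem~\ref{thm: gmmtermination} (in lower dimension) in its final paragraph, so using the former to prove the latter reverses the logical dependency; a clean inductive set-up would be required, and you do not provide one. Moreover, Theorem~\ref{prop: special termination 2} carries the extra hypothesis that g-log minimal models exist for pseudo-effective NQC g-lc pairs in dimensions $\le\dim X-1$, which is not among the assumptions of Theorem~\ref{thm: gmmtermination}, so you cannot invoke it in this proof as stated.

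The paper's actual argument avoids special termination entirely. It first isolates Proposition~\ref{prop: a special termination 1}, whose proof produces a contradiction by running two auxiliary $\Qq$-factorial MMPs (from $X_j$ and from $Y$) with scaling of a movable divisor $A$, using $P$-triviality (Lemma~\ref{prop: K trivial}) and the NQC rational polytope (Proposition~\ref{le: decomposition to nef Cartier divisors}) to keep the runs $K$-trivial, and appealing to uniqueness of the ample model $T$ plus the negativity lemma on a common resolution. Steps~1--6 of the proof of Theorem~\ref{thm: gmmtermination} then build, from the given g-log minimal model and a g-dlt modification, a model $Y'$ that is isomorphic in codimension one to the $X_i'$ and satisfies the hypotheses of Proposition~\ref{prop: a special termination 1}. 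Your first two reduction steps (the shift by $\mu Q$ and the lift to $\Qq$-factorial g-dlt models) do match the paper, but the heart of the argument must be replaced.
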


This theorem is proved in several steps.

\begin{proposition}\label{prop: a special termination 1}
	Under the above notation, Theorem \ref{thm: gmmtermination} holds if there is a birational map $\phi:X\dashrightarrow Y/Z$ between $\Qq$-factorial varieties satisfying:
	\begin{enumerate}
		\item the induced map $X_i\dashrightarrow Y$ is isomorphic in codimension one for every $i$, 	
		\item $(Y/Z,(B_Y+\mu E_Y)+(M_Y+\mu P_Y))$ is a g-log minimal model of $(X/Z,(B+\mu E)+(M+\mu P))$,	
		\item there is a reduced divisor $A \geq 0$ on $X$, whose components are movable divisors and generate $N^1(X/Z)$,  	
		\item there exists $\epsilon>0$, such that $(X/Z,(B+E+\epsilon A)+(M+P))$ is g-dlt with boundary part $(B+E + \epsilon A)$ and nef part $(M+P)$,
		\item  there exists $\delta>0$, such that $(Y/Z, (B_Y+(\mu+\delta)E_Y + \epsilon A_Y) + (M_Y+(\mu+\delta)P_Y))$ is g-dlt with boundary part $(B_Y+(\mu+\delta) E_Y + \epsilon A_Y)$ and nef part $(M_Y+(\mu+\delta) P_Y))$.
	\end{enumerate}
\end{proposition}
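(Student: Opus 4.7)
The plan is to derive a contradiction assuming the g-MMP is infinite. By hypothesis (1) every step $X_j\dashrightarrow X_{j+1}$ is an isomorphism in codimension one, so each step is necessarily a flip. Since $\mu\neq\nu_j$ and $\{\nu_j\}$ is non-increasing with limit $\mu$, we have $\nu_j>\mu$ for all $j$. The flipping extremal ray $R_j$ then satisfies
\[
(K_{X_j}+B_j+M_j+\mu Q_j)\cdot R_j=(\mu-\nu_j)\,Q_j\cdot R_j<0,
\]
so the same sequence of flips is also a $(K_X+B+M+\mu Q)$-g-MMP, whose g-log minimal model is $Y$ by (2).

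Next I would set up a discrepancy comparison between $X_j$ and $Y$. Since $Y$ is a g-log minimal model of $(X,(B+\mu E)+(M+\mu P))$ and the above g-MMP only increases discrepancies, $Y$ remains a (weak) g-log minimal model of each $(X_j,(B_j+\mu E_j)+(M_j+\mu P_j))$, so $a(D,Y,\cdot)\geq a(D,X_j,\cdot)$ for every divisor $D$ on any birational model. Fix a common log resolution $W$ with morphisms $p_j\colon W\to X_j$ and $q_j\colon W\to Y$, and set
\[
G_j:=p_j^*(K_{X_j}+B_j+\mu E_j+M_j+\mu P_j)-q_j^*(K_Y+B_Y+\mu E_Y+M_Y+\mu P_Y).
\]
The coefficient of a prime divisor $D$ on $W$ in $G_j$ equals $a(D,Y,\cdot)-a(D,X_j,\cdot)\geq 0$, so $G_j\geq 0$. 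Moreover each flip $X_j\dashrightarrow X_{j+1}$ strictly increases the discrepancy of at least one prime divisor contained in the flipped locus (by the negativity lemma), so $G_{j+1}\leq G_j$ and the inequality is strict in at least one component.

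Finally, hypotheses (3)--(5) are used to extract a uniform positive lower bound on the jump $G_j-G_{j+1}$. For $j$ large enough we have $\nu_j<\mu+\delta$; combining (4), (5), and Lemma \ref{lem:dltpreserved}, the pair $(X_j,(B_j+\nu_j E_j+\epsilon A_j)+(M_j+\nu_j P_j))$ is $\Qq$-factorial g-dlt and analogously on $Y$. Applying Proposition \ref{le: decomposition to nef Cartier divisors} to a nef divisor built from these data together with Proposition \ref{prop: length of extremal rays for g-pair} applied to $R_j$ shows that $Q_j\cdot C_j$ for an extremal curve $C_j\in R_j$ lies in a DCC set bounded below by a positive constant depending only on the fixed NQC data of $(X,B+M,E,P,A)$. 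Since by (3) the components of $A$ generate $N^1(X/Z)$ and carry over to each $X_j$ via the iso-in-codimension-one identification, every flipping ray is detected by some component of $A_j$, so the uniform positive lower bound on the discrepancy jump propagates through the full sequence. Infinitely many flips would then drive some coefficient of $G_j$ below zero, contradicting $G_j\geq 0$.

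The main obstacle is the last step: producing the uniform lower bound on $G_j-G_{j+1}$. The subtlety is that the support of $G_j$ is controlled only through the exceptional divisors of $p_j,q_j$, which may a priori vary with $j$; (3) ensures that every flipping ray is detected by a fixed component of $A$, while (5) provides the g-dlt structure under which Proposition \ref{le: decomposition to nef Cartier divisors} yields the necessary NQC discretization. The remaining bookkeeping is a standard adaptation of the termination arguments of \cite{Birkar12} to the NQC g-pair setting, using Propositions \ref{prop: length of extremal rays for g-pair} and \ref{le: decomposition to nef Cartier divisors} in place of their log-pair counterparts.
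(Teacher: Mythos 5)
Your proposal takes a genuinely different route from the paper, but it has a gap at the decisive step.

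The paper's proof does \emph{not} try to bound discrepancy jumps. Instead it fixes a step $j\gg 1$ with $\nu_{j-1}>\nu_j$, reduces to $\mu=0$, and then uses hypotheses (3)--(5) to apply \cite{BCHM10}: condition (3) lets one write $\epsilon A_j\sim_\Rr \epsilon H+\epsilon H'$ with $H$ ample, so the boundary becomes big and klt after the reduction of Lemma \ref{lem:glcklt}; conditions (4) and (5) ensure the relevant pairs are g-dlt. Running MMP on $(K_{X_j}+(B_j+\nu_{j-1}E_j+\epsilon A_j)+(M_j+\nu_{j-1}P_j))$ and, symmetrically, on $Y$, one obtains a \emph{common} $\Qq$-factorial model $T$ isomorphic in codimension one to $X_j$, on which $K_T+B_T+M_T$, $K_T+B_T+M_T+\nu_jQ_T$, and $K_T+B_T+M_T+\nu_{j-1}Q_T$ are all nef. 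The negativity lemma on a common resolution then yields a strict inequality for $K+B+M$ but equalities at \emph{two distinct} scaling values $\nu_j\neq\nu_{j-1}$; subtracting forces $r^*Q_j=s^*Q_T$, turning the strict inequality into an equality --- the contradiction. Lemma \ref{prop: K trivial} is used to ensure the auxiliary MMP is $(K+B+M+\nu_{j-1}Q)$-trivial.

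Your proposal replaces this with a difficulty-style counting argument: compare discrepancies $G_j$ between $X_j$ and $Y$, observe $G_j$ is decreasing, and assert a uniform positive lower bound on $G_j-G_{j+1}$. This last assertion is the gap. You invoke Propositions \ref{le: decomposition to nef Cartier divisors} and \ref{prop: length of extremal rays for g-pair} to claim $Q_j\cdot C_j$ lies in a DCC set bounded below, but these results concern intersection numbers with extremal curves, not discrepancy differences; there is no established passage from a DCC statement on $Q_j\cdot C_j$ to a lower bound on how much the discrepancy of a particular divisor drops under a single flip. In fact, without additional structure the jumps $G_j-G_{j+1}$ can shrink to zero even when the total decrease is bounded, which is precisely why termination arguments of this form require either ACC-type inputs on a fixed coefficient set (as in the difficulty used in Theorem \ref{prop: special termination 2}, where adjunction confines coefficients to $\mathbb{S}(\bm{b},\bm{\mu})$) or the kind of global nefness comparison the paper uses. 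You also misattribute the role of hypothesis (3): it is there to supply bigness so that \cite{BCHM10} applies, not to ensure ``every flipping ray is detected by a component of $A$.'' As written, the argument does not close; the paper's construction of the intermediate model $T$ and the two-parameter nefness comparison is the essential idea that your proposal lacks.
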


\begin{proof}
	\noindent Suppose that the g-MMP does not terminate. Pick $j\gg 1$, so that $\nu_{j-1} >\nu_{j}$. Then $X\dashrightarrow X_j$ is a partial g-MMP/$Z$ on $K_{X}+B+M+\nu_{j-1}Q$. It is also a partial g-MMP/$Z$ on $K_{X}+B+M+\nu_{j-1}Q+\epsilon A$ after replacing $\epsilon$ with a smaller number. In particular, $(X_j/Z,(B_j+\epsilon A_j)+M_j+\nu_{j-1}Q)$ is $\Qq$-factorial g-dlt, where $A_j$ is the birational transform of $A$ on $X_j$. As $j \gg1$, after reindexing, we may assume that the g-MMP only consists of flips$/Z$ starting with $(X_1/Z,B_1+M_1)=(X/Z,B+M)$. Moreover, by replacing $B+M$ with $B+M+\mu Q$, we may assume that $\mu=0$.
		
	Possibly by choosing a smaller $\epsilon$ again, by Lemma \ref{prop: K trivial}, we may assume that any sequence of g-MMP$/Z$ on $(K_{X_{j}}+(B_j+\nu_{j-1} E_j + \epsilon A_j)+(M_j+\nu_{j-1} P_j))$  is a sequence of $(K_{X_j}+(B_j+\nu_{j-1}E_j)+(M_j+\nu_{j-1} P_j))$-flop. By assumption, $K_{X_j}+(B_j+\nu_{j-1} E_j +\epsilon A_j)+(M_j+\nu_{j-1}P_j)$ is a limit of movable$/Z$ $\Rr$-divisors. Since the components of $A_j$ generate $N^1(X_j/Z)$, there exists a general ample$/Z$ divisor $H$ and an effective divisor $H' < A_j$, such that $A_j \sim_\Rr H+H'$, and $(X_j/Z, (B_j+\nu_{j-1} E_j+\epsilon H' + \epsilon H)+(M_j+\nu_{j-1} P_j))$ is g-dlt. By Lemma \ref{lem:glcklt}, there exists a klt pair $(X_j, \Delta_j)$ such that
	\[
	 K_{X_j}+\Delta_j \sim_\Rr K_{X_j}+(B_j+\nu_{j-1} E_j + \epsilon H' + \epsilon H)+(M_j+\nu_{j-1} P_j).
	\] 
	By \cite{BCHM10}, we may run an MMP$/Z$ with scaling of an ample divisor on $K_{X_j}+\Delta_j$, which is the same as an MMP$/Z$ on $(K_{X_j}+(B_j+\nu_{j-1} E_j + \epsilon A_j)+(M_j+\nu_{j-1} P_j))$. It terminates with a g-log minimal model $(T/Z, (B_T+\nu_{j-1} E_T+\epsilon A_T)+ (M_T+ \nu_{j-1} P_T))$. Notice that $X_j, T$ are isomorphic in codimension $1$, and $(K_T+(B_T+\nu_{j-1} E_T)+ (M_T+ \nu_{j-1} P_T))$ is nef$/Z$. Again, since the components of $A_T$ generate $N^1(T/Z)$, we can choose $0 < D_T \leq A_T$ such that $K_{T}+B_T+M_T+\nu_{j-1}Q_T+\epsilon D_T$ is ample. Moreover, $K_T+B_T+M_T+\nu_{j-1} Q_T$ is nef$/Z$ by the choice of $\epsilon$.

For the same reason, possibly by choosing smaller $\nu_j$ and $\epsilon$, we can run a g-MMP$/Z$ on $(K_Y+B_Y+M_Y+\nu_{j-1}Q_Y+\epsilon D_Y)$ with scaling of an ample divisor, and get a g-log minimal model, $(Y', B_{Y'}+M_{Y'}+\nu_{j-1} Q_{Y'}+\epsilon D_{Y'})$, such that both $K_{Y'}+B_{Y'}+M_{Y'}+\nu_{j-1} Q_{Y'}+\epsilon D_{Y'}$ and $K_{Y'}+B_{Y'}+M_{Y'}$ are nef (see Lemma \ref{prop: K trivial}). Because $Y, Y'$ are $\Qq$-factorial varieties which are isomorphic in codimension $1$ and $K_{T}+B_T+M_T+\nu_{j-1}Q_T+\epsilon D_T$ is ample$/Z$,  we have $Y'=T$. Hence, both 
	\[
	K_T+B_T+M_T+\nu_{j-1} Q_T \text{~and~} K_T+B_T+M_T
	\] are nef$/Z$. By $\nu_{j-1}>\nu_{j}>\mu=0$, $K_T+B_T+M_T+\nu_{j}Q_T$ is nef$/Z$.

Let $r: U \to X_j, s: U \to T$ be a common log resolution. By the negativity lemma, we have
	\begin{align*}
	r^*(K_{X_{j}}+B_{j}+M_{j}) > &s^*(K_T+B_T+M_T),\\
	r^*(K_{X_{j}}+B_{j}+M_{j}+\nu_{j-1} Q_{j}) =& s^*(K_T+B_T+M_T+\nu_{j-1} Q_T),\\
	r^*(K_{X_{j}}+B_{j}+M_{j}+\nu_{j} Q_{j}) =& s^*(K_T+B_T+M_T+\nu_{j} Q_T).
	\end{align*} 
	This is a contradiction.
\end{proof}
\begin{proof}[Proof of Theorem \ref{thm: gmmtermination}]
Let $(Y/Z,(B_Y+\mu E_Y)+(M_Y+\mu P_Y))$ be the g-log minimal model of $(X/Z,(B+\mu E)+(M+\mu P))$ with corresponding map $\phi: X \dashrightarrow Y$. As in Proposition \ref{prop: a special termination 1}, we may assume that $\mu=0$, and the g-MMP$/Z$ only consists of flips, $X_i\dashrightarrow X_{i+1}$. Because there are finite many g-lc centers, we can assume that no g-lc centers are contracted in the sequence. Moreover, choose $\nu_{i-1}>\nu_{i}$, then for any birational morphism $f: W \to X_i$, we can write 
\begin{equation}\label{eq: pullback Q}
f^*Q_i = f^*(E_i+P_i)=\tilde E_i+P_{W,i}+\Theta_{W,i},
\end{equation} with $\tilde E_i$ the birational transform of $E_i$. The meanings of $P_{W,i}, \Theta_{W,i}$ are as follows (cf. \eqref{eq: E', P'}). By definition, we can assume that $P_i=q_*P'$ where $q: W' \to X_i$ is a sufficiently high model and $P'$ is an NQC divisor. By taking a common log resolution, we can assume that there also exists a morphism $p: W' \to W$. Then we set $P_{W,i}=p_*P'$. By $E_i \geq 0$ and the negativity lemma,  $\Theta_{W,i} \geq 0$ is a $f$-exceptional divisor. By $(X_{i}, B_{i}+M_{i}+\nu_{i-1}Q_{i})$ is g-lc, there is no g-lc place of $(X_{i}, B_{i}+M_{i}+\nu_{i}Q_{i})$ which is contained in $\Supp \Theta_{W,i}$. We can replace $(X/Z,B+M)$ with $(X_i/Z,B_i+M_i)$ and $Q$ with $\nu_iQ_i$.

	Step 1. Let $f: W \to X$ and $g: W \to Y$ be a sufficiently high common log resolution of $(X/Z,(B+E)+(M+P))$ and $(Y/Z,B_Y+M_Y+Q_Y)$. We have
	\begin{equation}\label{eq: common resolution}
	\begin{split}
	F &\coloneqq f^*(K_{X}+B+M) -g^*(K_Y+B_Y+M_Y), \text{~and}\\
	F' &\coloneqq K_W+B_W+M_W - f^*(K_{X}+B+M), 
	\end{split}
	\end{equation}
	where $B_W$ is defined as \eqref{eq: B_Y}. Then $F, F'$ are effective exceptional divisors over $Y, X$ respectively. By the definition of g-log minimal model, $F'$ is also exceptional over $Y$.
	
	Let $E_W$ be the birational transform of $E$ on $W$, and $P_W$ be the nef$/Z$ divisor corresponding to $P$ on $W$. Set $Q_W=P_W+E_W$. We have
	\begin{equation}\label{eq: W}
	\begin{split}
	K_W+B_W+M_W &\equiv F+F'/Y.
	\end{split}
	\end{equation}  
	By Proposition \ref{prop: termination for very exceptional divisor}, we can run a g-MMP$/Y$ on $(K_W+B_W+M_W)$ with scaling of an ample divisor, and it terminates with a model $Y'$, such that $F+F'$ is contracted. Thus $(Y',B_{Y'}+M_{Y'})$ is a g-dlt modification of $(Y, B_Y+M_Y)$.

Step 2. We prove that $\phi: X\dashrightarrow Y$ does not contract any divisor. Otherwise, let $D$ be a prime divisor on $X$ which is contracted by $\phi$, and $D_W$ be the birational transform of $D$ on $W$. Since $a(D,X,B+M)<a(D,Y,B_Y+M_Y)$, $D_W$ is a component of $F$. In Step 1, the g-MMP$/Y$ on $(K_W+B_W+M_W)$ contracts $D_W$. We will get a contradiction as follows. Let $\nu_i$ be sufficiently small so that $W\dashrightarrow Y'$ is a partial g-MMP$/Z$ on $K_W+B_W+M_W+\nu_i Q_W$. Since $(X/Z,B+M+\nu_iQ)$ is g-lc, 
	$$K_W+B_W+M_W+\nu_i Q_W-f^{*}(K_X+B+M+\nu_i Q)$$
	is effective and exceptional over $X$. On the other hand, $X\dashrightarrow X_i$ is a partial g-MMP$/Z$ on $(K_X+B+M+\nu_i Q)$, we have $$f^{*}(K_X+B+M+\nu_i Q)\ge N,$$ 
	where 
	\[
	N=p_{*}q^{*}(K_{X_{i}}+B_i+M_i+\nu_i Q_i)
	\]
	for some common log resolution $p:W'\to W, q:W'\to X_i$. Since $K_{X_{i}}+B_i+M_i+\nu_i Q_i$ is nef$/Z$, $N$ is a pushforward of a nef divisor. In particular, $N$ is a limit of movable$/Z$ divisors. We have
	\[
	K_W+B_W+M_W+\nu_i Q_W=N+G,
	\]
	where $G \geq 0$ is exceptional over $X$. Here we use the fact that $X$ and $X_i$ are isomorphic in codimension one. Since $G$ is exceptional$/X$, $D_W$ is not a component of $G$. For the g-MMP in Step 1, if $D_W$ were contracted by an extremal contraction of a curve $C$, we have $(K_W+B_W+M_W+\nu_i Q_W) \cdot C<0$. But $N \cdot C \geq0$ and $G \cdot C\geq 0$. Thus $D_W$ cannot be contracted. This is a contradiction.
	
Step 3. From $W$, we construct a g-dlt modification of $(X, B+M)$. Let
	\[
	F'' \coloneqq K_W+(B_W+E_W)+(M_W+P_W) -f^*(K_{X}+(B+E)+(M+P)),
	\] which is effective and exceptional over $X$. We run a g-MMP$/X$ on $K_W+(B_W+E_W)+(M_W+P_W)$ which terminates with a model $h: X' \to X$ and contracts $F''$. This $h$ is a g-dlt modification of $(X, (B+E)+(M+P))$. Let
	\[
	K_{X'}+(B'+E')+(M'+P') =h^*(K_{X}+(B+E)+(M+P)),
	\]  
	where $E'$ is the strict transform of $E$ and $P'$ is the pushforward of $P_W$. By assumption (see the paragraph before Step 1) that for 
	\[
	Q'\coloneqq h^*(E+P)=E'+P'+\Theta'
	\] as in \eqref{eq: pullback Q}, there is no g-lc place of $({X}, B+M)$ which is contained in $\Theta'$. Thus $\Theta'=0$. Hence $h$ is also a g-dlt modification of $({X}, B+M)$, that is
	\[
	K_{X'}+B'+M' =h^*(K_{X}+B+M).
	\] In particular, $h$ extracts all the g-lc places of $(X, B+M)$ on $W$. Because $\phi^{-1}: Y \dashrightarrow X$ can only extract g-lc places of $(X,B+M)$ (see Remark \ref{rmk: extract lc places}), we see that these divisors are all on $X'$.
	
Step 4. By Subsection \ref{subsection: Lifting a sequence of flips with quasi-scaling}, we can lift the sequence $X_i\dashrightarrow X_{i+1}/Z_i$ to a g-MMP$/Z$ on $K_{X'}+B'+M'$ with scaling of $Q'$. Hence, each $({X_i'}, B_i'+M_i')$ is $\Qq$-factorial and g-dlt.

Step 5. Possibly by replacing $X'$ with $X'_i$ for $i\gg 1$, we show that $X', Y'$ are also isomorphic in codimension $1$, and $(Y'/Z,B'+M')$ is a g-log minimal model of $(X/Z,B+M)$.
	
	First, We show that $Y'\dashrightarrow X'$ does not contract any divisor. Suppose that $D\subset Y'$ is a prime divisor which is exceptional over $X'$. If $D$ is on $Y$, then $a(D,X,B+M)=0$ as $D$ is exceptional over $X$. Thus, by Step 3, $D$ is on $X'$, a contradiction. If $D$ is exceptional over $Y$, as $(Y',B_{Y'}+M_{Y'})$ is a g-dlt modification of $(Y, B_Y+M_Y)$, we have $a(D,Y,B_Y+M_Y)=0$. This implies that $a(D,X,B+M)=0$, and again we get a contradiction from Step 3. 
	
	Next, We show that $X' \dashrightarrow Y'$ does not contract any divisor. Possibly by replacing $X'$ with $X'_i$ for $i\gg 1$, we may assume that the g-MMP$/Z$ on $(K_{X'}+B'+M')$ with scaling of $Q'$ only consists of flips. By using the same method as Step 2, it suffices to show that $(Y'/Z,B_{Y'}+M_{Y'})$ is a g-log minimal model of $(X/Z,B+M)$. Thus we only need to compare g-log discrepancies. Suppose that $D\subset X'$ is a prime divisor which is exceptional over $Y'$. Since $X, Y$ are isomorphic in codimension $1$, $D$ is exceptional over $X$. Hence $a(D, X', B'+M')=a(D, X, B+M)=0$. If $a(D,Y',B_{Y'}+M_{Y'})=0$, then $a(D,Y,B_Y+M_Y)=0$. Thus the birational transform of $D$ cannot be a component of $F+F'$ in \eqref{eq: common resolution}, and it can not be contracted over $Y'$. This is a contradiction. Therefore, $a(D,Y',B_{Y'}+M_{Y'})>0$, which implies that 
	$(Y'/Z,B'+M')$ is a g-log minimal model of $(X/Z,B+M)$.

Step 6. Let $A\ge0$ be a reduced divisor on $W$ whose components are general ample$/Z$ divisors such that they generate $N^1(W/Z)$. Since $X'$ is obtained by running some g-MMP on $K_W+B_W+M_W+Q_W$, this g-MMP is also a partial g-MMP on $K_W+B_W+M_W+Q_W+\epsilon A$ for any $1 \gg \epsilon>0$. In particular, $(X'/Z,(B'+E'+\epsilon A')+(M'+P'))$ is g-dlt, where $A'$ is the birational transform of $A$. For similar reasons, we can choose $1\gg\epsilon,\delta>0$, so that $(Y'/Z,(B_{Y'}+\delta E_{Y'}+\epsilon A_{Y'})+M_{Y'}+\delta P_{Y'})$ is also g-dlt.

Now, by Proposition \ref{prop: a special termination 1}, the g-MMP$/Z$, $X'_i \dashrightarrow X'_{i+1}$, terminates. This implies that the original g-MMP$/Z$, $X_i \dashrightarrow X_{i+1}$, also terminates. This finishes the proof.
\end{proof}

We introduce the notion of difficulty for g-pairs before proving the special termination.

\begin{definition}[Difficulty for g-pairs]
	Let $(X,B+M)$ be a $\Qq$-factorial g-dlt pair with data $\tilde X \xrightarrow{f} X \to Z$ and $\tilde M$. For $\Rr$-divisors $B, \tilde M$, assume that $B=\sum b_j B_j$ is the prime decomposition of $B$, and $\tilde M=\sum \mu_i \tilde M_i$ with $\tilde M_i$ a nef$/Z$ Cartier divisor for each $i$.
	Let $\bm{b}=\{b_j\}$, $\bm{\mu}=\{\mu_i\}$. Recall that
	\[\mathbb{S}(\bm{b},\bm{\mu})= \{1-\frac{1}{m}+\sum_{j} \frac{r_jb_j}{m}+\sum_{i}\frac{s_i \mu_i}{m} \leq 1\mid m\in\mathbb{Z}_{>0},r_j,s_i\in\Zz_{\ge0}\}\cup\{1\}.\]
	Let $S$ be a g-lc center of $(X,B+M)$, then
	\[
	K_{S}+B_{S}+M_{S} = (K_{X}+B+M)|_{S}
	\] is defined in Proposition \ref{prop: intersection on g-lc centers}.
	The \emph{difficulty} of the g-pair $(X,B+M)$ is defined to be
	\begin{align*}
	&d_{\bm{b},\bm{\mu}}(S,B_S+M_S)\\
	=&\sum_{\alpha\in \mathbb{S}(\bm{b},\bm{\mu})}\#\{E \mid a(E,B_S+M_S)<1-\alpha,\Center_{S}(E)\nsubseteq \lfloor B_S \rfloor\}\\
	&+\sum_{\alpha\in\mathbb{S}(\bm{b},\bm{\mu})}\#\{E \mid a(E,B_S+M_S)\le1-\alpha, \Center_{S}(E)\nsubseteq \lfloor B_S \rfloor\}.
	\end{align*}
\end{definition}

\begin{remark}
Notice that $d_{\bm{b},\bm{\mu}}(S,B_S+M_S)$ is slightly different from \cite[Definition 4.2.9]{Fujino07} (cf. \cite[7.5.1 Definition]{Fli92}): in the second summand, we also includes $E$ whose g-log discrepancy \emph{equals} $1-\alpha$. By doing this, the standard argument can be simplified (cf. \cite[Proposition 4.2.14]{Fujino07} and the argument below). Just as for log pairs, $d_{\bm{b},\bm{\mu}}(S,B_S+M_S)<+\infty$ (cf. \cite[4.12.2 Lemma]{Fli92}). 
\end{remark}

\begin{theorem}\label{prop: special termination 2}
	Under the assumptions and notation of Definition \ref{defn:MMPsP}. We run a g-MMP$/Z$ with scaling of $Q$ on $K_X+B+M$. Assume the existence of g-log minimal models for pseudo-effective NQC g-lc pairs in dimensions $\le \dim X-1$. Suppose that $\nu_i>\mu$ for $\mu=\lim \nu_i$ (in particular, the g-MMP is an infinite sequence). Then, after finitely many steps, the flipping locus is disjoint from the birational transform of $\lf B \rf$. \end{theorem}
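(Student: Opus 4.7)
The plan is to induct on $d = \dim X$, and within the inductive step, on the codimension of g-lc centers. The base case $d \leq 1$ is immediate. By Proposition \ref{prop: intersection on g-lc centers}, every g-lc center of $(X, B+M)$ is an irreducible component of an intersection of components of $\lfloor B \rfloor$, so by codimension induction it suffices to prove that for every irreducible component $S$ of $\lfloor B \rfloor$, the flipping locus on $X_i$ is disjoint from the birational transform $S_i$ of $S$ for all $i \gg 0$. I fix such an $S$ and suppose, after passing to a subsequence, that the flipping locus meets $S_i$ for every $i$; I aim to derive a contradiction.

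By Lemma \ref{lem:adjgdlt} and generalized adjunction, I obtain $\Qq$-factorial NQC g-dlt pairs $(S_i, B_{S_i} + M_{S_i})$ in dimension $d-1$, together with induced birational maps $S_i \dashrightarrow S_{i+1}$; the coefficients of $B_{S_i}$ and the NQC decomposition of $M_{S_i}$ are drawn from the fixed set $\mathbb{S}(\bm{b}, \bm{\mu})$. The restricted divisors $K_{S_i} + B_{S_i} + M_{S_i} + \nu_i Q_{S_i}$ are nef over $Z$. The crucial technical step is to repackage the induced sequence on $S$ as a g-MMP with scaling of $Q_S$ on $(S, B_S + M_S)$. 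Because a flip on $X$ need not restrict to a single flip on $S$, I would first lift to a g-dlt modification as in Subsection \ref{subsection: Lifting a sequence of flips with quasi-scaling}, then decompose the restriction to $S$ as a finite composition of $(K_{S} + B_S + M_S)$-negative steps of a g-MMP with scaling. Along the way I track that g-dltness and NQC-ness survive restriction (via Lemma \ref{lem:dltpreserved} and Proposition \ref{le: decomposition to nef Cartier divisors}), and that the nef thresholds $\nu_{S,i}$ on $S$ inherit the strict inequality $\mu_S := \lim \nu_{S,i} < \nu_{S,i}$ from the corresponding property $\mu < \nu_i$ on $X$.

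With this setup, the induction hypothesis applies: since $(S, B_S + M_S + \mu_S Q_S)$ is a pseudo-effective NQC g-lc pair in dimension $d - 1$ (pseudo-effectiveness follows because $K_X + B + M + \mu Q$ is nef as the limit of the nef divisors $K_X + B + M + \nu_i Q$, so its restriction to $S$ is pseudo-effective), it admits a g-log minimal model. Hence Theorem \ref{thm: gmmtermination} applies to the induced g-MMP on $S$ and forces it to terminate, so $S_i \dashrightarrow S_{i+1}$ is an isomorphism for all $i \gg 0$. To turn this into the disjointness on $X$, I would then use the difficulty function $d_{\bm{b}, \bm{\mu}}(S_i, B_{S_i} + M_{S_i})$, which is finite and integer-valued: if the flipping locus on $X_i$ still met $S_i$, negativity-lemma estimates would force either a nontrivial induced MMP step on $S_i$ or a strict decrease in the difficulty; the first contradicts termination on $S$, and the second can only occur finitely often.

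The main obstacle is the middle step, namely producing a genuine g-MMP with scaling of $Q_S$ on $(S, B_S+M_S)$ out of the sequence of restrictions. This requires a careful g-pair analogue of the decomposition used in \cite{Birkar12}: one must verify that each restricted step factors as divisorial contractions and flips with respect to $K_{S_i} + B_{S_i} + M_{S_i}$ scaled by the appropriate $\nu_{S,i} Q_{S_i}$, and that the NQC decomposition of $M_{S_i}$ together with the strict descent $\mu_S < \nu_{S,i}$ persists through adjunction and through the lifting construction. Once these compatibility checks are in place, Theorem \ref{thm: gmmtermination} together with the difficulty-function monotonicity closes out the argument.
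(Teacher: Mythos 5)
Your proposal assembles the same ingredients the paper uses (adjunction to the components of $\lfloor B\rfloor$, the difficulty function $d_{\bm{b},\bm{\mu}}$, the lifting construction of Subsection \ref{subsection: Lifting a sequence of flips with quasi-scaling}, and Theorem \ref{thm: gmmtermination}), but it deploys them in the wrong order, and that order is not a cosmetic matter.

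The genuine gap is in what you call ``the main obstacle'': producing a g-MMP with scaling of $Q_S$ on $(S,B_S+M_S)$ out of the restricted sequence $S_i\dashrightarrow S_{i+1}$. The lifting construction of Subsection \ref{subsection: Lifting a sequence of flips with quasi-scaling}, when applied to $S$, works by taking a g-dlt modification $S_i'\to S_i$, running a g-MMP$/T_i$ on $K_{S_i'}+B_{S_i'}+M_{S_i'}$ with scaling, and identifying the output as a g-dlt modification $S_{i+1}'$ of $S_{i+1}$. That last identification is exactly where one needs $(S_{i+1},B_{S_{i+1}}+M_{S_{i+1}})$ to be the g-log canonical model of $(S_i,B_{S_i}+M_{S_i})$ over $T_i$, which in turn requires $S_i$ and $S_{i+1}$ to be isomorphic in codimension $1$. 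But a flip on $X_i$ can restrict to something wildly different on $S_i$: it may contract a divisor of $S_i$, or extract one on $S_{i+1}$, so $S_i\dashrightarrow S_{i+1}$ is a priori neither a flip nor even a birational contraction. You propose to run the termination argument first and then invoke the difficulty function to clean up, but the termination argument (via the lifting) cannot even get started without the codimension-$1$ isomorphism. The paper resolves exactly this by deploying the difficulty function \emph{first}: the negativity lemma (applied on the ambient $X_i\dashrightarrow X_{i+1}$ and then restricted) gives $d_{\bm{b},\bm{\mu}}(S_i,\cdot)\geq d_{\bm{b},\bm{\mu}}(S_{i+1},\cdot)$, with strict inequality whenever $S_i$ and $S_{i+1}$ fail to be isomorphic in codimension $1$; since the difficulty is a non-negative integer, this failure can occur only finitely often. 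Only then does the lifting produce a genuine g-MMP with scaling on $S'$, to which Theorem \ref{thm: gmmtermination} and the induction hypothesis apply, and the final step $S_i\simeq S_{i+1}$ follows from \cite[Lemma 4.2.16]{Fujino07}, not from a second pass at the difficulty function.

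Two smaller remarks. First, the induction should run on the \emph{dimension} of g-lc centers starting from dimension $0$ (the base case being that, after finitely many steps, the flipping locus contains no g-lc centers because there are only finitely many); this is what feeds the statement ``$\phi_i$ induces an isomorphism on $\lfloor B_{S_i}\rfloor$'' which the difficulty computation needs. Your codimension phrasing describes the same induction but should make the base case explicit. Second, ``passing to a subsequence'' of an MMP is delicate, because a subsequence of flips is not itself an MMP; the paper avoids this by working directly with the full sequence and using monotonicity of an integer-valued invariant.
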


\begin{proof}
We follow the proof of \cite[Theorem 4.2.1]{Fujino07}. 
	
Because the number of g-lc centers of a g-lc pair is finite, we may assume that after finitely many steps, the flipping locus contains no g-lc centers. Thus $\phi_i: X_i\dashrightarrow X_{i+1}$ induces an isomorphism of $0$-dimensional g-lc centers for every $i$. 
	
	 We show that $\phi_i$ induces an isomorphism of every g-lc center by induction on dimensions $d$ of g-lc centers. Then the theorem follows from $d=\dim X -1$. Now, for each $d \leq k-1$, we assume that $\phi_i$ induces an isomorphism for every $d $-dimensional g-lc centers.
	
	Let $S$ be a $k$-dimensional g-lc center of $(X,B+M)$, and $S_i$ be the birational transform of $S$ on $X_i$. By adjunction formula (Proposition \ref{prop: intersection on g-lc centers}), $(K_{X_i}+B_i+M_i)|_{S_i}=K_{S_i}+B_{S_i}+M_{S_i}$, and the coefficients of $B_{S_i}$ belong to the set $\mathbb{S}(\bm{b},\bm{\mu})$. By the induction hypothesis, after finitely many flips, $\phi_i$ induces an isomorphism on $\lfloor B_{S_i}\rfloor$, and thus $\Center_{S_i}E\subseteq \lfloor B_{S_i}\rfloor$ if and only if $\Center_{S_{i+1}}E\subseteq \lfloor B_{S_{i+1}}\rfloor$. By the negativity lemma, $a(E,S_i,B_{S_i}+M_{S_i})\le a(E,S_{i+1},B_{S_{i+1}}+M_{S_i})$. Hence, 	
	\[
	d_{\bm{b},\bm{\mu}}(S_i,B_{S_i}+M_{S_i})\ge d_{\bm{b},\bm{\mu}}(S_{i+1},B_{S_{i+1}}+M_{S_{i+1}}).
	\] Moreover, if $S_i$ and $S_{i+1}$ are not isomorphic in codimension $1$, then the above inequality is strict. In fact, if there exists a divisor $E \subset S_i$ which is not on $S_{i+1}$, then $E$ is counted by the second summand in $d_{\bm{b},\bm{\mu}}(S_i,B_{S_i}+M_{S_i})$, while not counted in $d_{\bm{b},\bm{\mu}}(S_{i+1},B_{S_{i+1}}+M_{S_{i+1}})$. Similarly for the case that $E$ is on $S_{i+1}$ but not on $S_i$. For $i\gg 1$, we can assume that the difficulties are constant, and thus $S_i$ and $S_{i+1}$ are isomorphic in codimension $1$. This is the advantage of introducing the above difficulty: in \cite[Proposition 4.2.14]{Fujino07}, the case that $S_i \to T_i$ is a divisorial contraction but $S_{i+1} \to T_i$ is a small contraction cannot be excluded by the difficulty therein.

Let $T$ be the normalization of the image of $S_1$ (hence the image of any $S_i$) in $Z$, and $T_i$ be the normalization of the image of $S_i$ in $Z_i$. In general, $S_i\dashrightarrow S_{i+1}/T_i$ may not be a $(K_{S_i}+B_{S_i}+M_{S_i})$-flip$/T$. However, we can use the same argument as Subsection \ref{subsection: Lifting a sequence of flips with quasi-scaling} to construct a sequence of g-MMP$/T$ with scaling of an NQC divisor over some g-dlt modifications, $S_i' \dashrightarrow S_{i+1}'$. For simplicity, we just sketch the argument below.

Because $K_{X_1}+B_1+M_1+\nu_1 Q_1 \equiv 0/Z_1$, we have
\[
K_{S_1}+B_{S_1}+M_{S_1}+\nu_1 Q_{S_1} \equiv 0/T_1,
\] where $Q_{S_1} = Q_1|_{S_1}$ is defined inductively as follows (cf. \eqref{eq: E', P'}\eqref{eq: Q'}). Suppose that $\pi: \tilde X_1 \to X_1$ is a model of $X_1$ such that $P_1=\pi_*\tilde P_1$ with $\tilde P_1$ an NQC divisor. Then we have $\pi^*P_1 = \tilde P_1 +F$ with $F \geq 0$. Notice that $S_1$ is an irreducible component of $V_1 \cap \cdots \cap V_{n-k}$, where $V_i \subset \lf B_i \rf$ (see Proposition \ref{prop: intersection on g-lc centers}). We first define $Q_{V_1}$. Let $\pi$ also denote the induced morphism $\tilde V_1 \to V_1$. Then $\pi^*(P_1|_{V_1}) = \tilde P_1|_{\tilde V_1}+F|_{\tilde V_1}$. Here $\tilde P_1|_{\tilde  V_1}$ is still an NQC divisor, and $F|_{\tilde V_1}$ is an effective divisor. Because $\nu_1>0$, no component of $E_1$ is contained in $\lf B_1 \rf$, and thus $E_1|_{V_1} \geq 0$. Now set
\[
E_{V_1} = E_1|_{V_1} + \pi_*(F|_{\tilde V_1}) \text{~and~} P_{V_1} = \pi_*(\tilde P_1|_{\tilde V_1}),
\] and
\[
Q_{V_1} \coloneqq E_{V_1} + P_{V_1}= \pi_*(\pi^*(E_1|_{V_1}+P_1|_{V_1}) = Q_1|_{V_1}.\] We can repeat the above process to define $Q_{S_1}, P_{S_1}$. Notice that $P_{S_1}$ is a pushforward of an NQC divisor.

Let $K_{S'_1}+ B_{S'_1}+M_{S'_1} = h_i^*(K_{S_1}+ B_{S_1}+M_{S_1})$ be a $\Qq$-factorial g-dlt modification of $(S_1, B_{S_1}+M_{S_1})$. By the same argument as Subsection \ref{subsection: Lifting a sequence of flips with quasi-scaling}, we can run a g-MMP$/T_1$ on $K_{S'_1}+ B_{S'_1}+M_{S'_1}$ with scaling of $Q_1'$. It terminates with $(S'_2,  B_{S'_2}+M_{S'_2})$ which is a g-dlt modification of $(S_2,  B_{S_2}+M_{S_2})$. We can continue this process on $K_{S'_2}+B_{S'_2}+M_{S'_2}$. This gives a sequence of g-MMP$/T$ with scaling of $Q_{S_1'}$. If this sequence does not terminate. Then by the assumption, there exists a g-log minimal model$/T$ for $K_{S'_1}+B_{S'_1}+M_{S'_1}+\mu Q_{S'_1}$. By Theorem \ref{thm: gmmtermination}, the g-MMP terminates, and this is a contradiction. Hence the g-MMP$/T$ terminates, that is, $(S'_i, B_{S_i'}+M_{S_i'})= (S'_{i+1}, B_{S_{i+1}'}+M_{S_{i+1}'})$ for $i\gg 1$. This implies that $S_i \simeq S_{i+1}$ by \cite[Lemma 4.2.16]{Fujino07}.
\end{proof}

\section{Proofs of the main results}\label{sec: proof}

A birational NQC weak Zariski decomposition can be obtained from a g-log minimal model.

\begin{proposition}\label{prop: g-log minimal model implies NQC zarski decomposition}
	Let $(X/Z,B+M)$ be an NQC g-lc pair. Suppose that $(X/Z,B+M)$ has a g-log minimal model, then $(X/Z,B+M)$ admits a birational NQC weak Zariski decomposition. 
\end{proposition}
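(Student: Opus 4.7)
The plan is to take a common resolution of $X$ and the given g-log minimal model $(Y/Z,B_Y+M_Y)$, and extract a weak Zariski decomposition from it. Let $\phi\colon X\dashrightarrow Y$ denote the birational map, and choose a sufficiently high common birational model $p\colon W\to X$, $q\colon W\to Y$ that dominates the high model $\tilde X$ on which the common nef part $\tilde M$ (and its NQC decomposition) lives. Set
\[
P\coloneqq q^{*}(K_Y+B_Y+M_Y),\qquad N\coloneqq p^{*}(K_X+B+M)-P,
\]
so that $p^{*}(K_X+B+M)=P+N$. Then $g\coloneqq p$ will yield the desired birational NQC weak Zariski decomposition provided $P$ is NQC over $Z$ and $N\geq 0$.

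For the NQC claim on $P$: the pair $(Y/Z,B_Y+M_Y)$ is $\Qq$-factorial g-dlt (so $Y$ is klt by Remark~\ref{rmk: klt}), NQC (the nef part is $\tilde M$, unchanged from $X$), and $K_Y+B_Y+M_Y$ is nef over $Z$ by Definition~\ref{def: g-log minimal model and g-log terminal model}(3). I will apply Proposition~\ref{le: decomposition to nef Cartier divisors} to obtain a decomposition $K_Y+B_Y+M_Y=\sum a_i(K_Y+D_i+N_i)$ with $a_i\in\Rr_{>0}$ summing to $1$ and a common $m\in\Nn$ such that each $m(K_Y+D_i+N_i)$ is nef $\Qq$-Cartier over $Z$. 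Pullback by $q$ preserves nefness and $\Qq$-Cartierness of each summand, so $P$ is a positive real combination of nef $\Qq$-Cartier divisors over $Z$ and is thus NQC.

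For the effectivity of $N$: the plan is to use the negativity lemma applied to $p$. First, $-N=q^{*}(K_Y+B_Y+M_Y)-p^{*}(K_X+B+M)$ is $p$-nef, because $q^{*}(K_Y+B_Y+M_Y)$ is nef over $Z$ (hence over $X$), while $p^{*}(K_X+B+M)$ is numerically trivial over $X$. Writing $K_W+B_W^{X}+\tilde M_W=p^{*}(K_X+B+M)$ and $K_W+B_W^{Y}+\tilde M_W=q^{*}(K_Y+B_Y+M_Y)$ (using that $\tilde M$ is common to both g-pairs), one finds $N=B_W^{X}-B_W^{Y}$, whence $p_{*}N=B-p_{*}B_W^{Y}$. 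To prove $p_{*}N\geq 0$, I will compare coefficients along every prime $D\subset X$ with strict transform $E\subset W$:
\begin{enumerate}
\item if $\phi$ does not contract $D$, then $\phi_{*}D$ is a divisor on $Y$ that is not exceptional over $X$, and hence does not appear in the reduced sum of $\phi^{-1}$-exceptional divisors added to form $B_Y$ in \eqref{eq: B_Y}; this gives $\operatorname{mult}_{E}B_W^{Y}=\operatorname{mult}_{\phi_{*}D}B_Y=\operatorname{mult}_{D}B$;
\item if $\phi$ contracts $D$, condition (5) of Definition~\ref{def: g-log minimal model and g-log terminal model} yields $a(D,X,B+M)<a(D,Y,B_Y+M_Y)$, equivalently $\operatorname{mult}_{E}B_W^{Y}<\operatorname{mult}_{D}B$.
\end{enumerate}
In either case $\operatorname{mult}_{E}B_W^{Y}\leq \operatorname{mult}_{D}B$, so $p_{*}B_W^{Y}\leq B$ and $p_{*}N\geq 0$. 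The negativity lemma \cite[Lemma 3.39]{KM98} then gives $N\geq 0$.

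The main obstacle is the effectivity step: one must carefully interpret the definition of g-log minimal model—specifically formula \eqref{eq: B_Y} for $B_Y$ together with the strict inequality in condition (5)—to bound the coefficients of $B_W^{Y}$ along strict transforms, after which the negativity lemma takes care of the exceptional part automatically. The NQC step is a clean application of Proposition~\ref{le: decomposition to nef Cartier divisors}.
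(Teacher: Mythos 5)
Your proof is correct and follows essentially the same route as the paper: apply Proposition~\ref{le: decomposition to nef Cartier divisors} on $Y$ to see the nef part is NQC, take a common resolution $p\colon W\to X$, $q\colon W\to Y$, and conclude $N=p^*(K_X+B+M)-q^*(K_Y+B_Y+M_Y)\geq 0$ by the negativity lemma. The only difference is that you spell out why $p_*N\geq 0$ (unpacking \eqref{eq: B_Y} and condition (5) of Definition~\ref{def: g-log minimal model and g-log terminal model}), a step the paper leaves implicit.
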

\begin{proof}
	Let $(Y/Z,B_Y+M_Y)$ be a g-log minimal model of $(X/Z,B+M)$. By Proposition \ref{le: decomposition to nef Cartier divisors}, there exist $\Qq$-Cartier nef$/Z$ divisors $M_i$, and $\mu_i\in\Rr_{>0}$, such that 
	\[
	K_{Y}+B_Y+M_Y=\sum \mu_i M_i.
	\]
	Let $p:W\to X,q:W\to Y$ be a common resolution of $X\dashrightarrow Y$, then
	\begin{align*}
	p^{*}(K_X+B+M)&=q^{*}(K_Y+B_Y+M_Y)+E\\
	&=\sum \mu_i q^{*}(M_i)+E,
	\end{align*} with $E \geq 0$. This is a birational NQC weak Zariski decomposition of $(X/Z,B+M)$.
\end{proof}

This shows one direction of Theorem \ref{thm: weak zariski equiv mm}. For the other direction, we first show the existence of g-log minimal models instead of g-log terminal models (see Definition \ref{def: g-log minimal model and g-log terminal model}).

\begin{definition}[\cite{Birkarweak12} Definition 2.1]\label{def: theta}
	For a g-pair $(X/Z, B+M)$ with boundary part $B$ and nef part $M$. Let $f:W\to X$ be a projective birational morphism from a normal variety, and $N$ be an effective $\Rr$-divisor on $W$. Let $f_{*}N=\sum_{i\in I} a_i N_i$ be a prime decompostion. We define
	\[
	\theta(X/Z,B+M,N) \coloneqq \#\{i\in I \mid N_i \text{~is not a component of~} \lf B\rf\}.
	\]
\end{definition}

\begin{definition}
A g-pair $(X/Z, B+M)$ is called \emph{log smooth} if $X$ is smooth, with data $X \xrightarrow{{\rm id}} X \to Z$ and $M$ (in particular, $M$ is nef$/Z$), and 
\[
\Supp(B)\bigcup\Supp(M)
\] is a simple normal crossing divisor. 
\end{definition}

\begin{theorem}\label{thm: weak zariski implies mm}
	Suppose that Conjecture \ref{conj: NQCbirweakzar} holds in dimensions $\le d$. Then g-log minimal models exist for pseudo-effective NQC g-lc pairs of dimensions $\le d$.
\end{theorem}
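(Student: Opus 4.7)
The plan is to induct on $d$. For the inductive step, assume Conjecture \ref{conj: NQCbirweakzar} in dimensions $\le d$ together with the theorem in dimensions $\le d-1$, and produce a g-log minimal model for a pseudo-effective NQC g-lc pair $(X/Z, B+M)$ of dimension $d$. By the conjecture there is a birational morphism $h : \tilde X \to X$ with $h^*(K_X+B+M) \equiv P + N/Z$, where $P$ is NQC and $N \ge 0$. Passing to a sufficiently high log-smooth model and applying Proposition \ref{prop: dlt}, I may replace $(X, B+M)$ by its g-dlt modification and assume that $(X, B+M)$ is $\Qq$-factorial log-smooth g-dlt and $K_X+B+M \equiv P + N / Z$ holds on $X$ itself; this replacement preserves the g-log minimal model problem by the negativity lemma. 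Following the reduction of \cite[\S 6]{Birkarhuweak14} (absorbing each component of $N$ not lying in $\Supp \lf B \rf$ into the boundary via a further log-smooth blowup), I further arrange that $\Supp N \subseteq \Supp \lf B \rf$.

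Next, introduce the nef threshold
\[
\nu := \inf\{\, t \ge 1 \mid N + tP \text{ is nef}/Z \,\}.
\]
If $\nu = 1$, then $K_X+B+M \equiv P+N$ is nef and $(X/Z, B+M)$ is already a g-log minimal model. Otherwise $\nu > 1$, so $Q := (\nu-1)P$ is a positive multiple of an NQC divisor and $K_X+B+M + Q \equiv \nu P + N$ is nef. I then run a g-MMP$/Z$ on $K_X+B+M$ with scaling of $Q$ in the sense of Definition \ref{defn:MMPsP}, invoking Lemma \ref{lem:MMPscalingNQC}. At each step, the contracted extremal ray $R_i$ satisfies $(K_{X_i}+B_i+M_i) \cdot R_i < 0$; since $P_i$ is nef and $K_{X_i}+B_i+M_i \equiv P_i + N_i$, we get $N_i \cdot R_i < 0$. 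Hence the g-MMP is $N$-negative, and the flipping/contraction locus meets $\Supp N_i \subseteq \Supp \lf B_i \rf$ at every step. The key quantitative claim is that the threshold $\nu_{i+1}$ on $X_{i+1}$ satisfies $\nu_{i+1} < \nu_i$ whenever $\nu_i > 1$: the contracted ray is $(N_i + \nu_i P_i)$-trivial with $P_i \cdot R_i > 0$ (which follows from $\nu_i > 1$ combined with $(K_i+B_i+M_i) \cdot R_i < 0$), so after the step $N_{i+1} + \nu_i P_{i+1}$ is strictly positive on the new flipped curves, forcing the threshold to drop.

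For termination, suppose for contradiction that the g-MMP is infinite and set $\mu := \lim \nu_i \ge 1$. By the strict decrease, $\nu_i > \mu$ for all $i$. The inductive hypothesis provides g-log minimal models for NQC g-lc pairs in dimensions $\le d-1$, which is the hypothesis of Theorem \ref{prop: special termination 2}. Applying that theorem, after finitely many steps the flipping locus becomes disjoint from the birational transform of $\lf B_i \rf$; but every flip is $N$-negative and $\Supp N_i \subseteq \Supp \lf B_i \rf$, so the flipping locus must meet $\lf B_i \rf$ at every step, a contradiction. Hence the g-MMP terminates with a model $(Y/Z, B_Y+M_Y)$ on which $K_Y+B_Y+M_Y$ is nef. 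That $(Y/Z, B_Y+M_Y)$ is $\Qq$-factorial g-dlt follows from Lemma \ref{lem:dltpreserved}, and the g-log discrepancy inequality $a(D,X,B+M)<a(D,Y,B_Y+M_Y)$ for divisors $D$ exceptional over $Y$ follows from the negativity lemma applied across the MMP, completing the verification that $Y$ is a g-log minimal model.

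The main obstacle I anticipate is the strict-decrease claim $\nu_{i+1}<\nu_i$; a naive attempt runs into the subtlety that at the initial step the nef divisor $\nu P + N$ lies on the boundary of the nef cone, so the $(K+B+M+Q)$-trivial rays giving valid MMP steps must be extracted carefully. Establishing a genuine decrease relies on the length-of-extremal-rays bound (Proposition \ref{prop: length of extremal rays for g-pair}) and the rational-polyhedrality supplied by Proposition \ref{le: decomposition to nef Cartier divisors} and Lemma \ref{lem:lengthbdd}, which are precisely the place where the NQC hypothesis enters. A secondary technicality is adapting the Birkar--Hu reduction $\Supp N \subseteq \Supp \lf B \rf$ to the g-pair setting, where the nef part $M$ must be tracked across the required blowups; this is manageable because g-log discrepancy $0$ components can be freely incorporated into $\lf B \rf$ without disturbing the g-dlt structure.
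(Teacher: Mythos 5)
Your outline captures the $\theta(X/Z,B+M,N)=0$ base case in the right spirit, but there are two genuine gaps, one of them serious.

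\textbf{The reduction to $\Supp N\subseteq\Supp\lf B\rf$ is not a blowup argument.} You write that one can ``absorb each component of $N$ not lying in $\Supp\lf B\rf$ into the boundary via a further log-smooth blowup,'' treating this as a secondary technicality. But extracting a divisor $D$ into the boundary (or simply adding $D$ to $B$) changes the adjoint divisor: a g-log minimal model of $(X,(B+\epsilon D)+M)$ is not automatically one of $(X,B+M)$. In the paper this reduction \emph{is} the main technical content of the proof: one introduces the invariant $\theta(X/Z,B+M,N)$ (Definition~\ref{def: theta}) and argues by descending induction on it. In the inductive step one sets $\alpha$ to be the smallest $t$ with $\lf(B+tN)^{\le 1}\rf\ne\lf B\rf$, writes $(B+\alpha N)^{\le 1}=B+C$ and $\alpha N=C+A$, and — since $\theta(X,(B+C)+M,N+C)<\theta(X,B+M,N)$ — produces a g-log minimal model of $(X,(B+C)+M)$ by induction. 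The resulting nef divisor $P'=h^*(K_Y+(B+C)_Y+M_Y)$ is then recombined with the original decomposition to build a \emph{new} NQC weak Zariski decomposition of $K_X+B+M$, and a case analysis on $\tilde C$ (Step~3~(i)/(ii) of the paper's proof) either yields the desired minimal model directly or produces a decomposition with strictly smaller $\theta$, contradicting minimality. None of this is supplied by the blowup you propose; the Birkar–Hu remark you cite asserts that this reduction is \emph{possible}, not that it is trivial.

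\textbf{The strict-decrease claim $\nu_{i+1}<\nu_i$ is not justified as stated.} A step of the g-MMP with scaling of $Q$ is $(K+B+M+\nu_iQ)$-trivial, so after a flip the flipped curves $C^+$ satisfy $(N_{i+1}+\nu_iP_{i+1})\cdot C^+=0$ while $(K_{i+1}+B_{i+1}+M_{i+1})\cdot C^+>0$; this forces $P_{i+1}\cdot C^+<0$ (since $\nu_i>1$), and since $P_{i+1}$ is only the pushforward of an NQC divisor — not itself nef after the flip — you cannot conclude the nef threshold strictly drops. The paper instead distinguishes the case $\nu_i=\nu_{i-1}$ and, using Proposition~\ref{prop: P trivial} together with the identity (4.1) in the proof, runs an auxiliary $(K+B+M+\nu_iP)$-trivial g-MMP$/Z$ with scaling of an ample divisor that is shown to terminate and strictly decrease the threshold. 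Without a device of this kind the scaling could stall at a fixed $\nu_i$, and the special termination argument does not apply (Theorem~\ref{prop: special termination 2} requires $\nu_i>\mu$ with $\mu=\lim\nu_i$, not merely $\nu_i\ge\mu$). Your reduction to Theorem~\ref{prop: special termination 2} therefore needs the same extra mechanism.
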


\begin{proof}

Step 1. It is enough to show Theorem \ref{thm: weak zariski implies mm} in the log smooth case (cf. \cite[Remark 2.6]{Birkar10} or \cite[Remark 2.8]{Birkar12}). In fact, let $(X/Z,B+M)$ be an NQC g-lc pair. Let $\pi:(W/Z,B_W+M_W)\to X$ be a log resolution of $(X,B+M)$, where $B_W$ is defined as \eqref{eq: B_Y}, and $M_W$ is an NQC divisor. Thus
\[
K_W+B_W+M_W=\pi^{*}(K_X+B+M)+F,
\] with $F\ge0$ an exceptional divisor. Let $(Y/Z,B_Y+M_Y)$ be a g-log minimal model of $(W/Z,B_W+M_W)$ and $D$ be a prime divisor on $X$ which is contracted over $Y$. Then,
	\[
	a(D,X,B+M)=a(D,W,B_W+M_W)<a(D,Y,B_Y+M_Y).
	\]
	This implies that $(Y/Z,B_Y+M_Y)$ is also a g-log minimal model of $(X/Z,B+M)$ (see Definition \ref{def: g-log minimal model and g-log terminal model}). 
	
	Assume that $\pi: W \to X$ is a sufficiently high model such that  $\pi^{*}(K_X+B+M)=N+P/Z$ is an NQC weak Zariski decomposition, where $P$ is an NQC divisor and $N$ is effective. Then 
	\[
	K_W+B_W+M_W=\pi^{*}(K_X+B+M)+F=(N+F)+P/Z.
	\]
	This is an NQC weak Zariski decomposition$/Z$ for $K_W+B_W+M_W$. Moreover, 
	\[
	\theta(X/Z,B+M,N)=\theta(W/Z,B_W+M_W,N)=\theta(W/Z,B_W+M_W,N+F).
	\]

Thus we may assume that $(X,B+M)$ is log smooth with $M$ an NQC divisor, and $K_X+B+M \equiv P+N/Z$ is an NQC weak Zariski decomposition. Moreover, by induction on dimensions, we can assume that Theorem \ref{thm: weak zariski implies mm} holds in dimensions $\leq d-1$. 

In the following, we prove Theorem \ref{thm: weak zariski implies mm} by induction on $\theta(X,B+M,N)$.

Step 2. When $\theta(X,B+M,N)=0$, we show Theorem \ref{thm: weak zariski implies mm}.

	By definition, $\theta(X,B+M,N)=0$ implies that $\Supp\lf B \rf \supset \Supp N$. By Lemma \ref{lem:glcklt}, for any $\beta_0 > 0$, we can run a g-MMP$/Z$ on $(K_X+B+M+\beta_0 P)$ with scaling of an ample divisor. By proposition \ref{prop: P trivial}, for $\beta_0\gg 1$, such g-MMP$/Z$ is $P$-trivial. Thus it is also a g-MMP$/Z$ on $(K_X+B+M)$. Moreover, by $K_X+B+M \equiv P+N/Z$ and $P$ is nef$/Z$, the contracting locus belongs to the birational transform of $\Supp N \subset \Supp B$. Because $M+\beta_0 P$ is NQC, the above g-MMP$/Z$ terminates by Theorem \ref{prop: special termination 2}. Let $(X_1, B_0+M_1+\beta P_1)$ be the corresponding g-log minimal model with $K_{X_1}+B_1+M_1 \equiv N_1+P_1/Z$. Moreover, $P_1$ is nef$/Z$.
	
	Next, we run a special kind of g-MMP$/Z$ on $(K_{X_1}+B_1+M_1)$ with scaling of $P_1$ as follows. 
	
	Suppose that we have constructed $(X_i,B_i+M_i)$. Let 
	\[
	\nu_i = \inf\{t \geq 1 \mid K_{X_i}+B_i+M_i+t P_i \text{~is nef~}/Z\}.
	\] 
	
	(i) If $\nu_i=0$. Then $({X_i}, B_i+M_i)$ is a g-log minimal model, and we have done. 
	
	(ii) If $0<\nu_i<\nu_{i-1}$ (we set $\nu_0=+\infty$). By Lemma \ref{lem:MMPscalingNQC}, there exists an extremal ray $R$ such that $(K_{X_i}+B_i+M_i)\cdot R<0$ and $(K_{X_i}+B_i+M_i+\nu_iP_i)\cdot R=0$. We contract $R$ and get a divisorial contraction or a flipping contraction. Let $X_i\dashrightarrow X_{i+1}$ be the corresponding divisorial contraction or flip.
	
	(iii) If $\nu_i=\nu_{i-1}>0$. Choose $\beta_i<\nu_i$ sufficiently close to $\nu_i$ ($\beta_i$ can be determined from the discussion later), we run a g-MMP$/Z$ with scaling of an ample$/Z$ divisor $H$ on $(K_{X_i}+B_i+M_i+\beta_i P_i)$. We claim that this g-MMP$/Z$ is also a g-MMP$/Z$ with scaling of $P_i$, and it terminates. Let $(X_{i+1}, B_{i+1}+M_{i+1})$ be the resulting g-pair. In particular, $\nu_{i+1} \leq \beta_i<\nu_i$.
	
\begin{proof}[Proof of the Claim in (iii)]	
	Since for any $\nu_i>\beta_i>0$, we have
	\begin{equation}\label{eq: K+B+M+betaP}
	\begin{split}
	&\frac{\nu_i}{\nu_i-\beta_{i}}(K_{X_i}+B_i+M_i+\beta_i P_i)\\
	=&(K_{X_i}+B_i+M_i)+\frac{\beta_i}{\nu_i-\beta_i}(K_{X_i}+B_i+M_i+\nu_i P_i).
	\end{split}
	\end{equation} 
	If $\beta_i$ is sufficiently close to $\nu_i$, then $\frac{\beta_i}{\nu_i-\beta_i}$ is sufficiently large. For a g-MMP$/Z$ with scaling of an ample$/Z$ divisor on $(K_{X_i}+B_i+M_i+\beta_i P_i)$, by \eqref{eq: K+B+M+betaP}, it is also a g-MMP$/Z$ on $(K_{X_i}+B_i+M_i)+\frac{\beta_i}{\nu_i-\beta_i}(K_{X_i}+B_i+M_i+\nu_i P_i)$. By Proposition \ref{le: decomposition to nef Cartier divisors}, $K_{X_i}+B_i+M_i+\nu_i P_i$ is an NQC divisor$/Z$. Hence by Proposition \ref{prop: P trivial},  for a sufficiently large $\frac{\beta_i}{\nu_i-\beta_i}$, this g-MMP$/Z$ is $(K_{X_i}+B_i+M_i+\nu_i P_i)$-trivial. By $\beta_i<\nu_i$, it is also a g-MMP$/Z$ on $(K_{X_i}+B_i+M_i)$ with scaling of $P_i$. In fact, if $Y$ is an intermediate variety in this g-MMP$/Z$, for a contracting curve $\Gamma$,
	\[
	(K_{Y}+B_Y+M_Y+\beta_i P_Y)\cdot \Gamma<0 \text{~and~} (K_{X_Y}+B_Y+M_Y+\nu_i P_Y) \cdot \Gamma=0,
	\] thus $(K_{X_Y}+B_Y+M_Y)\cdot \Gamma<0$ and $P_Y \cdot \Gamma>0$.
	
	Next, we show that the g-MMP$/Z$ terminates. Because
	\begin{align*}
	K_{X_i}+B_i+M_i&=N_i+P_i\\
	&=\frac{1}{1+\nu_i}(N_i+(1+\nu_i)P_i)+\frac{\nu_i}{1+\nu_i}N_i\\
	&=\frac{1}{1+\nu_i}(K_{X_i}+B_i+M_i+\nu_i P_i)+\frac{\nu_i}{1+\nu_i}N_i,
	\end{align*}
	a flipping curve intersects the birational transform of $N_i$ negatively. Thus the flipping locus is contained in the birational transform of $\Supp N_i \subseteq \Supp \lfloor B_i\rfloor$. Suppose that the g-MMP$/Z$ does not terminate. Because the g-MMP$/Z$ is a scaling of an ample$/Z$ divisor, the corresponding nef thresholds $\nu_j'$ satisfies $\nu_j' \neq \lim \nu_j'$. Otherwise, $\mu'= \lim \nu_j'>0$, then the g-MMP$/Z$ can be viewed as a g-MMP$/Z$ on $K_{X_i}+B_i+M_i+\beta_iP_i+\mu' H$, then it terminates by Lemma \ref{lem:glcklt} and \cite[Corollary 1.4.2]{BCHM10}. However, by Theorem \ref{prop: special termination 2}, the above g-MMP$/Z$ terminates. This is a contradiction.	
\end{proof}
	
By applying (i)-(iii), we obtain a g-MMP$/Z$ on $K_{X_i}+B_i+M_i$ with scaling of $P_1$, 
\[
\dashrightarrow X_i = Y_i^{1} \dashrightarrow \cdots \dashrightarrow Y_i^{k_1} = X_{i+1} \dashrightarrow.
\] Let $\tilde\nu_j$ be the corresponding nef thresholds. Then either the g-MMP$/Z$ terminates or 
\[
\lim\tilde \nu_j = \lim\nu_i>\tilde \nu_j.
\] Moreover, as $K_{Y^j_i}+B_{Y^j_i}+M_{Y^j_i}=N_{Y^j_i}+P_{Y^j_i}$ and $P_{Y^j_i} \cdot \Gamma>0$ for a contracting curve $\Gamma$, we have $N_{Y^j_i} \cdot \Gamma<0$. Thus the flipping locus is contained in the birational transform of $\Supp N_i \subseteq \Supp \lfloor B_i\rfloor$. By Theorem \ref{prop: special termination 2} again, the g-MMP$/Z$ terminates. This finishes the proof of the $\theta(X,B+M,N)=0$ case.

Step 3. Next we show the induction step. The argument is identical to \cite[Proof of Theorem 1.5]{Birkarweak12}, except that we deal with the g-pairs. 

First, for a divisor $D=\sum_i d_i D_i$, we write $D^{\leq 1} \coloneqq \sum_i \min\{d_i, 1\}D_i$. Suppose that Theorem \ref{thm: weak zariski implies mm} does not hold. We assume that $\theta(X,B+M,N) \geq 1$ is the minimal number such that $K_X+B+M$ does not have a log minimal model. By Step 1, we can assume that $(X,B+M)$ is log smooth. By $\theta(X,B+M,N) \geq 1$,
	\[
	\alpha \coloneqq \min\{t>0 \mid \lf (B+tN)^{\leq 1} \rf \neq \lf B \rf\} 
	\] is a finite number. Let $C$ be the divisor such that $(B+\alpha N)^{\leq 1} = B+C$. Thus $\Supp C \subseteq \Supp N$, and 
	\begin{equation}\label{eq: components of C}
	\theta(X,B+M,N) = \#\{\text{components of~}C\}.
	\end{equation} Let $A \geq 0$ satisfy $\alpha N = C+A$, then $\Supp A \subseteq \Supp \lf B \rf$, and $A=(B+\alpha N)-(B+\alpha N)^{\leq 1}$.
	
	Because $\theta(X, (B+C)+M,N+C)<\theta(X, B+M,N)$, by the induction hypothesis, $(X, (B+C)+M)$ has a log minimal model, $(Y,(B+C)_Y+M_Y)$. Notice that $(X, (B+C)+M)$ is a g-lc pair with boundary part $B+C$ and nef part $M$. Let $g: U \to X, h: U \to Y$ be a sufficiently high log resolution, then
	\begin{equation}\label{eq: common U}
	g^*(K_X+(B+C)+M)=h^*(K_Y+ (B+C)_Y+M_Y)+N',
	\end{equation} with $N' \geq 0$ and $h$-exceptional.
	Let 
	\[
	P' \coloneqq  h^*(K_Y+ (B+C)_Y+M_Y),
	\] then it is nef$/Z$ and NQC by Proposition \ref{le: decomposition to nef Cartier divisors}. Thus $P'+N'$ is an NQC weak Zariski decomposition$/Z$ for $g^*(K_X+(B+C)+M)$. We have
	\[
	g^{*}(N+C)-N'=h^{*}(K_Y+ (B+C)_Y+M_Y)-g^{*}P.
	\]
	Since $h^{*}(K_Y+ (B+C)_Y+M_Y)-g^{*}P$ is anti-nef$/Y$, by the negativity lemma, $N' \leq g^*(N+C)$. As $\Supp C \subseteq \Supp N$, we have $\Supp N'\subseteq \Supp g^{*}N$. 
	
	By above, we have
	\[
	\begin{split}
	(1+\alpha)g^*(K_X+B+M) &= g^*(K_X+B+M)+\alpha g^*P+\alpha g^*N\\
	&=g^*(K_X+B+M)+\alpha g^*P+ g^*C+ g^*A\\
	& = P'+N'+\alpha g^*P+ g^*A.
	\end{split}
	\] 
	Thus,
	\[g^*(K_X+B+M) = \frac{1}{1+\alpha}(P'+\alpha g^*P)+\frac{1}{1+\alpha}(N'+g^*A).\]
	Set 
	\[
	P'' \coloneqq  \frac{1}{1+\alpha}(P'+\alpha g^*P) \text{~and~} N''\coloneqq \frac{1}{1+\alpha}(N'+g^*A),
	\] then
	\[
	g^*(K_X+B+M)=P''+N''
	\]
	is a birational NQC weak Zariski decomposition$/Z$ for $K_X+B+M$. Since $\alpha N=C+A$, we have $\Supp N'' \subseteq \Supp g^*N$, and thus $\Supp g_* N'' \subseteq \Supp N$. Because $\theta(X, B+M,N)$ is minimal, $\theta(X, B+M,N) = \theta(X, B+M,g_{*}N'')$, and every component of $C$ is also a component of $g_*N''$ according to \eqref{eq: components of C}. Because $A, C$ do not have common components, we have $\Supp C \subseteq \Supp g_*N'$, and thus $C$ is exceptional over $Y$ by \eqref{eq: common U}. Hence by definition $(B+C)_Y=B_Y$, and $P' = h^*(K_Y+B_Y+M_Y)$. We will compare the g-log discrepancies below.
	
	Let $G \geq 0$ be the largest divisor such that $G \leq g^{*}C$ and $G \leq N'$. Set $\tilde C = g^{*}C-G, \tilde N' = N' -G$. By \eqref{eq: common U}, we have 
	\begin{equation}\label{eq: tilde C}
	g^*(K_X+B+M)+\tilde C = P'+\tilde N'.
	\end{equation}
	
(i) If $\tilde C$ is exceptional over $X$, then because $g^*(K_X+B+M)-P'=\tilde N'-\tilde C$ is anti-nef over $X$, by the negativity lemma, $\tilde N'-\tilde C \geq 0$, which implies $\tilde C=0$ as $\tilde C$ and $\tilde N'$ have no common components. From \eqref{eq: tilde C},
\[
g^*(K_X+B+M)= P'+\tilde N'
\]
	is a birational NQC-weak Zariski decomposition$/Z$ for $K_X+B+M$. Moreover,
	\[
	\begin{split}
	&g^*(K_X+B+M)-h^*(K_Y+B_Y+M_Y)\\
	=&\sum_D \left(a(D; Y, B_Y+M_Y)-a(D;X, B+M)\right)D\\
	=&\tilde N',
	\end{split}
	\] 
	where $D$ runs over the prime divisors on $U$.
	
	(ia) Suppose that $\Supp g_*\tilde N' = \Supp g_*N'$. Then by \eqref{eq: common U}, $\Supp \tilde N'$ contains the birational transform of all the prime exceptional$/Y$ divisors on $X$. Hence $(Y, B_Y+M_Y)$ is also a g-log minimal model of $(X, B+M)$, a contradiction.

	(ib) Hence we can assume $\Supp g_*\tilde N' \subsetneq \Supp g_*N'$. Thus,
	$$\Supp(g_{*}N'-g_{*}G)=\Supp g_{*}\tilde N'\subsetneq \Supp g_{*} N'\subseteq \Supp N.$$
	Since $G$ is the largest divisor such that $G \leq g^{*}C$ and $G \leq N'$, some component of $C$ is not a component of $g_*\tilde N'$. By \eqref{eq: components of C}, we have
	\[
	\theta(X/Z, B+M, \tilde N') <\theta(X/Z, B+M, N),
	\] 
	which contradicts the minimality of $\theta(X/Z, B+M, N)$.

	(ii) Hence $\tilde C$ is not exceptional over $X$. Let $\beta>0$ be the smallest number such that
	\[
	\tilde A \coloneqq \beta g^*N-\tilde C \text{~and~} g_*\tilde A \geq 0.
	\] Then there exists a component $D$ of $g_*\tilde C$ which is not a component of $g_*\tilde A$. We have
	\[
	\begin{split}
	(1+\beta)g^*(K_X+B+M) &= g^*(K_X+B+M) +\tilde C+\tilde A+\beta g^*P\\
	&=P'+\beta g^*P+\tilde N' + \tilde A.
	\end{split}
	\] 
	By the negativity lemma, we have $\tilde N' + \tilde A \geq 0$. Let 
	\[
	P''' \coloneqq\frac{1}{1+\beta}(P'+\beta g^*P) \text{~and~}  N'''\coloneqq\frac{1}{1+\beta}(\tilde N' + \tilde A),
	\] then
	\[
	g^*(K_X+B+M) = P'''+N'''
	\] is a birational NQC weak Zariski decomposition of $K_X+B+M$. By construction, $D$ is a component of $\Supp g_{*}\tilde C\subseteq \Supp C \subseteq \Supp N$. Since $\Supp\tilde C \cap \Supp\tilde N' = \emptyset$, $D$ is not a component of $g_{*}\tilde N'$. Thus, $D$ is not a component of 
	\[
	\Supp( g_*N''')=\Supp(g_{*}\tilde N')\cup\Supp (g_{*}\tilde {A}).
	\] Hence
	\[
	\theta(X, B+M,N)>\theta(X,B+M,N'''),
	\]
	which still contradicts the minimality of $\theta(X/Z, B+M, N)$.
\end{proof}

\begin{proof}[Proof of Theorem \ref{thm: weak zariski scaling ample}]
	Let  $\nu_i$ be the nef threshold in the g-MMP$/Z$ with scaling of an ample$/Z$ divisor $A$ (see Definition \ref{defn:MMPsP}). By Lemma \ref{lem:glcklt}, for any $\epsilon>0$, there exists a klt pair $(X,\Delta_{\epsilon})$, such that $\Delta_{\epsilon}\sim_{\Rr} B+M+\epsilon A/Z$. If $\lim \nu_i = \mu>0$, then this g-MMP$/Z$ is also a MMP$/Z$ on $K_X+\Delta_{\mu} \sim_\Rr B+M+\mu A/Z$. By \cite[Corollary 1.4.2]{BCHM10}, it terminates. Hence we have $\mu=0$. If this g-MMP$/Z$ does not terminate, we have $\nu_i>\mu=0$ for all $i$. By Theorem \ref{thm: weak zariski implies mm}, $(X/Z,B+M)$ has a g-log minimal model $(Y/Z,B_Y+M_Y)$. Hence the g-MMP terminates by Theorem \ref{thm: gmmtermination}.
\end{proof}

\begin{proof}[Proof of Theorem \ref{thm: weak zariski equiv mm}]
The g-minimal model conjecture (Conjecture \ref{conj:glogmmp}) implies the birational weak Zariski decomposition conjecture (Conjecture \ref{conj: NQCbirweakzar}) by Proposition \ref{prop: g-log minimal model implies NQC zarski decomposition}. For the other direction, Theorem \ref{thm: weak zariski scaling ample} implies that any g-MMP$/Z$ with scaling of an ample divisor terminates. The resulting model is a g-log terminal model as a g-MMP$/Z$ does not extract divisors.
\end{proof}

\begin{proof}[Proof of Theorem \ref{thm: ter lc implies g-log minimal model}] First, we show that $(X/Z,B+M)$ has a g-log minimal model (see Definition \ref{def: g-log minimal model and g-log terminal model}). By Step 1 of the proof of Theorem \ref{thm: weak zariski implies mm}, we can assume that $(X/Z,B+M)$ is log smooth. By Lemma \ref{prop: P trivial}, there exists a $\beta \gg1$, such that a g-MMP$/Z$ with scaling of an ample divisor on $(K_X+B+M+\beta M)$ is $M$-trivial. Thus, this g-MMP$/Z$ is also a MMP$/Z$ on the dlt pair $(K_X+B)$. By assumption, it terminates with a model $Y$. Since $(X/Z,B+M)$ is pseudo-effective, $X\dashrightarrow Y/Z$ is birational and $K_Y+B_Y+(\beta+1) M_Y$ is nef. By Lemma \ref{lem:MMPscalingNQC}, we can run a g-MMP$/Z$ on $(K_Y+B_Y+M_Y)$ with scaling of $M_Y$. This g-MMP is also a MMP$/Z$ on $(K_Y+B_Y)$. By assumption, it terminates with $Y'$.      Because the sequence $X \dashrightarrow Y \dashrightarrow Y'$ is also a g-MMP$/Z$ on $K_X+B+M$, $(Y'/Z,B_{Y'}+M_{Y'})$ is a desired g-log minimal model.
	
From a g-log minimal model to a g-log terminal model, we use the same argument as Theorem \ref{thm: weak zariski equiv mm}. In fact, the existence of g-log minimal model implies the existence of birational weak Zariski decomposition by Proposition \ref{prop: g-log minimal model implies NQC zarski decomposition}, then by Theorem \ref{thm: weak zariski scaling ample}, any g-MMP$/Z$ with scaling of an ample divisor terminates. The resulting model is a g-log terminal model.
\end{proof}

\bibliographystyle{abbrv}

\end{document}